\numberwithin{equation}{section}
\newcommand{\bR}{\mathbb{R}}
\newcommand{\bQ}{\mathbb{Q}}
\newcommand{\cO}{\mathcal{O}}
\newcommand{\bZ}{\mathbb{Z}}
\newcommand{\Qq}{\mathbb{Q}}
\newcommand{\Rr}{\mathbb{R}}
\newcommand{\Zz}{\mathbb{Z}}
\newcommand{\Exc}{\operatorname{Exc}}
\newcommand{\coker}{\operatorname{coker}}
\newcommand{\Spec}{\operatorname{Spec}}
\newcommand{\Supp}{\operatorname{Supp}}
\newcommand{\mult}{\operatorname{mult}}
\newcommand{\lf}{\lfloor}
\newcommand{\rf}{\rfloor}
\newcommand{\Ff}{\mathcal{F}}
\newcommand{\Oo}{\mathcal{O}}
\newcommand{\Xx}{\mathcal{X}}
\newcommand{\Ex}{\mathrm{Ex}}
\newtheorem{thm}{Theorem}[section]
\newtheorem{cor}[thm]{Corollary}
\newtheorem{lem}[thm]{Lemma}
\newtheorem{prop}[thm]{Proposition}
\newtheorem{claim}[thm]{Claim}
\theoremstyle{definition}
\newtheorem{defn}[thm]{Definition}
\theoremstyle{definition}
\newtheorem{rem}[thm]{Remark}
\theoremstyle{definition}
\begin{document}

	\title[On the termination of the MMP for semi-stable fourfolds]{On the termination of the MMP for semi-stable fourfolds in mixed characteristic}
	\author{Lingyao Xie and Qingyuan Xue}

	\address{Department of Mathematics, The University of Utah, Salt Lake City, UT 84112, USA}
	\email{lingyao@math.utah.edu}

	\address{Department of Mathematics, The University of Utah, Salt Lake City, UT 84112, USA}
	\email{xue@math.utah.edu}
	

	\begin{abstract}
		We improve on the result of Hacon and Witaszek by showing that the MMP for semi-stable fourfolds in mixed characteristic terminates in several new situations.
		In particular, we show the validity of the MMP for strictly semi-stable fourfolds over excellent Dedekind schemes globally when the residue fields are perfect and have characteristics $p>5$.
	\end{abstract}
	
	\maketitle
	
	\tableofcontents

	\section{Introduction}
	
	Recently there has been much progress in developing the Minimal Model Program (MMP) in positive and mixed characteristic. For surfaces the theory is classical, and the MMP for excellent surfaces was proved in \cite{Tan18}. For threefolds over a perfect field $k$ with char $p>3$, most of the important results in the MMP were established in \cite{HX15,CTX15,Bir16,BW17,GNT19,HW21,HW19}; see also \cite{Wal18,HNT20}. When the base field $k$ is imperfect (and has characteristic $>5$), some results, for example the existence of minimal models, were proved in \cite{DW19}. As for mixed characteristic, \cite{BMP+20} established the MMP for arithmetic threefolds whose residue characteristics are $p>5$ (see \cite{XX22} for the case $p>3$), and \cite{TY20} established the MMP for strictly semi-stable threefolds over excellent Dedekind schemes and some birational cases. The MMP for varieties with dimension greater than 3 is much harder, but \cite{HW20} proved the validity of some special MMP for fourfolds in positive and mixed characteristic.
	
	Our main purpose in this article is to show the validity of the MMP for strictly semi-stable fourfolds in mixed characteristic (and some partial results in positive characteristic). To achieve this, we generalize the result of \cite[Theorem 1.2]{HW20} by proving the termination of MMP in the non-effective case and the existence of Mori fiber spaces.
	
	Throughout this paper, we assume that log resolutions of all log pairs with the underlying varieties being birational to $X$ as below exist and are given by a sequence of blow-ups along the non-snc locus.
	
	\begin{thm}\label{thm: main thm mixed}
		Let $(X, \Delta)$ be a four-dimensional $\bQ$-factorial dlt pair projective over a discrete valuation ring $R$, where $R$ is of mixed characteristic and has perfect residue field with characteristic $p > 5$. Let $s \in \Spec R$ be the special point and let $\phi: X \to \Spec R$ be the natural morphism.
		
		Suppose that $\Supp(\phi^{-1}(s)) \subseteq \lfloor \Delta \rfloor$. Then we can run a $(K_X+\Delta)$-MMP with scaling of an ample divisor $A$ over $\Spec R$, and
		\begin{enumerate}
			\item if $K_X+\Delta$ is pseudo-effective, then this MMP terminates with a minimal model;
			\item if $K_X+\Delta$ is not pseudo-effective, then this MMP terminates with a Mori fiber space.
		\end{enumerate}
	\end{thm}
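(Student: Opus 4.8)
The plan is to run the $(K_X+\Delta)$-MMP with scaling of $A$ over $\Spec R$, prove that it terminates, and then read off (1) or (2) from the final object. Existence of the individual steps is the relative, mixed-characteristic, four-dimensional input: divisorial and flipping contractions together with the flips are provided by (the methods of) \cite{HW20}, while the Mori fibre case requires in addition a base-point-free/contraction statement for fibre-type $(K_X+\Delta)$-negative rays in this setting, which I would establish first. Granting existence, the dichotomy is automatic once termination is known: a minimal model has $K+\Delta$ nef and hence pseudo-effective, whereas on a Mori fibre space $-(K+\Delta)$ is ample over a base of smaller dimension, so $K+\Delta$ is negative on a covering family of curves and cannot be pseudo-effective; moreover, since the special fibre is, as a divisor, the pullback of a uniformiser of $R$ and hence $\sim 0$, the divisor $K_X+\Delta$ is pseudo-effective over $\Spec R$ if and only if its restriction to the generic fibre $X_\eta$ is pseudo-effective over $K:=\operatorname{Frac}(R)$ --- a property preserved along the MMP. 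So everything reduces to termination.

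For termination I would invoke special termination. By hypothesis $\lfloor\Delta\rfloor$ contains $\Supp(\phi^{-1}(s))$, and every component of $\lfloor\Delta\rfloor$ is three-dimensional: a vertical component is a projective threefold over the perfect residue field $k$ (of characteristic $p>5$), and a horizontal component is a normal projective arithmetic threefold over $R$. Termination of the MMP for dlt pairs holds for threefolds over perfect fields of characteristic $p>5$ by \cite{HX15,CTX15} (and the related works), for arithmetic threefolds by \cite{BMP+20}, and for excellent surfaces by \cite{Tan18}; feeding this lower-dimensional termination into the special termination argument (as carried out in the threefold case in \cite{TY20}, cf. \cite{HX15}) shows that after finitely many steps every flipping, flipped and divisorially contracted locus is disjoint from $\lfloor\Delta\rfloor$, in particular from the special fibre.

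Consequently the remaining part of the MMP takes place away from the special fibre, i.e.\ over $\Spec K$: it restricts to --- and is step-for-step identified with --- a $(K_{X_\eta}+\Delta_\eta)$-MMP over $K$ with scaling of $A|_{X_\eta}$, where $(X_\eta,\Delta_\eta)$ is a $\bQ$-factorial dlt \emph{threefold} over a field of characteristic zero. By the classical minimal model program for threefolds (Mori, Kawamata, Shokurov, Koll\'ar) this terminates: with a minimal model if $K_{X_\eta}+\Delta_\eta$ is pseudo-effective and with a Mori fibre space otherwise. In the first case the surviving MMP over $\Spec R$ has no further extremal ray available (a $(K+\Delta)$-negative extremal ray over $R$ would be contracted by $\phi$, hence lie in $X_\eta$ or in the special fibre, and both possibilities are now excluded), so $K_X+\Delta$ becomes nef over $\Spec R$, which is (1). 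In the second case the fibration on $X_\eta$ is the restriction of a $(K+\Delta)$-negative extremal contraction $X'\to Z$ over $\Spec R$ produced by the fibre-type contraction theorem of the first paragraph together with the relative cone theorem, and since a contraction whose generic fibre is a fibration must have $\dim Z<4$, this is a Mori fibre space, which is (2).

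The step I expect to be the main obstacle is exactly the fibre-type contraction theorem asserted in the first paragraph: a base-point-free type statement contracting a fibre-type $(K_X+\Delta)$-negative ray for a four-dimensional $\bQ$-factorial dlt pair over a mixed-characteristic DVR with reduced special fibre in the boundary. Characteristic-zero vanishing is unavailable, so the natural route is to work on the special fibre --- a threefold over the perfect field $k$ with $p>5$, where semiampleness of adjoint-type line bundles is known --- and to lift sections using the semi-stable structure, in the spirit of how \cite{HW20} handles divisorial and flipping contractions. A second, more bookkeeping-type difficulty is to run the special termination argument carefully in this relative situation, tracking the interaction between the vertical components (lying over $\Spec k$) and the horizontal (arithmetic) ones, and verifying that the tail of the MMP over $\Spec R$ genuinely coincides with an MMP over $K$ on $X_\eta$.
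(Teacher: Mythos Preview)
Your outline matches the paper's: establish a cone/contraction package and existence of flips in this setting (the paper does this as Propositions~\ref{prop: cone theorem for semi-stable fourfold over Spec R}, \ref{prop: L semiample mixed char} and Theorem~\ref{thm: existence of flips for semi-stable fourfold over curve with general coef}, the last removing the standard-coefficient hypothesis of \cite{HW20} by inducting on the number of components of $\Delta$ with non-standard coefficient), then appeal to special termination for the dlt MMP with scaling. The one place you take a longer road is the termination endgame. Once special termination says that every subsequent flipping and flipped locus is disjoint from $\lfloor\Delta\rfloor\supseteq\Supp X_s$, there is no ``remaining MMP over $K$'' to analyse: any closed curve $C\subseteq X$ is proper over $\Spec R$, so its image in $\Spec R$ is closed and therefore contains $s$; hence $C$ meets $X_s$. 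Thus every flipping curve meets $X_s\subseteq\lfloor\Delta\rfloor$, and special termination already \emph{is} termination. That is exactly how the paper concludes, and it also dissolves your final ``bookkeeping'' worry and streamlines case~(2): nothing needs to be lifted from $X_\eta$; once flips stop, the MMP on $X$ halts at either a minimal model or a Mori fibre space directly, and pseudo-effectivity of $K_X+\Delta$ decides which. Your anticipated route to the fibre-type contraction theorem---semiampleness on the special fibre plus lifting---is indeed what the paper carries out in Proposition~\ref{prop: Kx semiample mixed char}.
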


	Since we still do not know the existence of Mori fiber spaces for threefolds over imperfect fields, we can only obtain a weaker result in purely positive characteristic.
	
	\begin{thm}\label{thm: main thm char p}
		Let $(\mathcal{X}, \Phi)$ be a four-dimensional $\bQ$-factorial dlt pair projective over $C$, where $C$ is a smooth curve defined over a perfect field with characteristic $p>5$. Let $R := \cO_{C,s}$ where $s \in C$ is a closed point, and $(X, \Delta) := (\mathcal{X} , \Phi) \times_C \Spec R$. We still use $s$ to denote the special point of $\Spec R$. Let $\phi: X \to \Spec R$ be the natural morphism.\par
		Suppose that $\Supp(\phi^{-1}(s)) \subseteq \lfloor \Delta \rfloor$. If $K_X+\Delta$ is pseudo-effective, then we can run a $(K_X+\Delta)$-MMP with scaling of an ample divisor $A$ over $\Spec R$ which terminates with a minimal model.
	\end{thm}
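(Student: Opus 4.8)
The plan is to run the $(K_X+\Delta)$-MMP with scaling of $A$ over $\Spec R$ and to prove termination by reducing it to three-dimensional MMPs on the two fibres of $\phi$: the special fibre is a three-fold over the perfect field $k(s)$, and the generic fibre $X_\eta$ is a $\QQ$-factorial dlt three-fold over the imperfect, $F$-finite field $\operatorname{Frac}(R)=K(C)$ of characteristic $p>5$. First I would record the structure coming from the hypothesis. Writing $\phi^{-1}(s)=\sum_i a_iE_i$ with $a_i\geq 1$, each $E_i$ lies in $\lfloor\Delta\rfloor$, so it is a normal prime divisor and $(K_X+\Delta)|_{E_i}=K_{E_i}+\Delta_{E_i}$ by adjunction, with $(E_i,\Delta_{E_i})$ three-dimensional dlt over $k(s)$; moreover, since $R$ is a DVR, $\phi^{-1}(s)$ is cut out by the pullback of a uniformizer, so $\sum_i a_iE_i\sim 0$ on $X$.

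The existence of each step of this MMP---the relevant $(K_X+\Delta)$-negative divisorial contractions and flips over $\Spec R$---is supplied in this semi-stable setting by \cite[Theorem 1.2]{HW20} together with the cone and contraction theorems, so the task is termination. The key point is a dichotomy for the contracted extremal rays: if $\Gamma=\RR_{\geq 0}[C]$ is a $(K_X+\Delta)$-negative extremal ray over $\Spec R$, then $\big(\sum_i a_iE_i\big)\cdot C=0$ forces either $C\subseteq E_i$ for some $i$, or $C\cdot E_i=0$ for all $i$ and hence $C\cap\Supp\phi^{-1}(s)=\emptyset$. Thus every step of the MMP is either \emph{vertical}---its flipped/contracted locus lies inside $\Supp\phi^{-1}(s)$, and by adjunction it restricts to a step of an MMP for the (possibly non-normal) three-fold supported on $\phi^{-1}(s)$ over $k(s)$---or \emph{horizontal}---it is an isomorphism over an open neighbourhood of $s$, and restricts to a step of the MMP with scaling of $A_\eta$ for $(X_\eta,\Delta_\eta)$ over $K(C)$. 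Since a vertical step is an isomorphism on $X_\eta$ and a horizontal step is an isomorphism near $X_s$, termination over $\Spec R$ reduces to bounding the number of steps of each kind.

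For the vertical steps one invokes termination of the three-fold (semistable/semi-dlt) MMP over the perfect field $k(s)$ of characteristic $p>5$, which is known; some care is needed because a vertical step of the fourfold MMP may induce a divisorial contraction on the component $E_i$ containing the contracted curve, and one must run the modifications of the various $E_i$ compatibly along their double loci, i.e.\ treat $\phi^{-1}(s)$ as a single semistable three-fold. For the horizontal steps the pseudo-effectivity assumption is used: since $K_X+\Delta$ is pseudo-effective over $\Spec R$, its restriction $K_{X_\eta}+\Delta_\eta$ is pseudo-effective on $X_\eta$, and an MMP with scaling of the ample divisor $A_\eta$ for such a three-fold pair over an imperfect $F$-finite field of characteristic $p>5$ terminates with a minimal model by the existence of three-fold log minimal models over imperfect fields \cite{DW19} together with termination of the three-fold MMP with scaling. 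Combining the two, the $(K_X+\Delta)$-MMP over $\Spec R$ stops after finitely many steps and ends with $K_X+\Delta$ nef over $\Spec R$, that is, a minimal model.

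The main technical obstacles I anticipate are the bookkeeping of the vertical steps---matching the fourfold flips and divisorial contractions with a genuine, terminating semistable MMP on the non-normal special fibre $\phi^{-1}(s)$---and checking that the horizontal subsequence is honestly an MMP with scaling of $A_\eta$ (this follows because, whenever a horizontal step is performed, the scaling threshold over $\Spec R$ is realized already on $X_\eta$). It is exactly the termination of the generic-fibre MMP that obstructs the non-pseudo-effective case here: the horizontal MMP would then have to end with a Mori fibre space over the imperfect field $K(C)$, and such Mori fibre spaces are not known to exist for threefolds over imperfect fields. In mixed characteristic $\operatorname{Frac}(R)$ has characteristic $0$, so the generic-fibre MMP always produces a Mori fibre space when $K_X+\Delta$ is not pseudo-effective, which is why Theorem~\ref{thm: main thm mixed} includes that case while Theorem~\ref{thm: main thm char p} does not.
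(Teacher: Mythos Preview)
Your dichotomy between vertical and horizontal steps contains a genuine error. Over the DVR $\Spec R$, every closed point of $X$ lies in the special fibre $X_s$ (because $\phi$ is proper and $\{\eta\}$ is not closed in $\Spec R$), so any nonempty closed subset of $X$---in particular the exceptional locus of any nontrivial contraction---meets $X_s$. Equivalently, every closed integral curve $C\subseteq X$ contracted over $\Spec R$ already lies in $X_s=\bigcup E_i$, hence in some $E_i$; the case $C\cap X_s=\emptyset$ simply does not occur. (Note also that an open neighbourhood of $s$ in $\Spec R$ is all of $\Spec R$, so a step that is ``an isomorphism over an open neighbourhood of $s$'' would be an isomorphism.) Even under the more reasonable reinterpretation ``horizontal $=$ nontrivial on $X_\eta$'', such a step still modifies the special fibre, so the two kinds of step cannot be disentangled and terminated separately in the way you describe.

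The paper exploits exactly this feature. Since every extremal ray is represented by a curve in $X_s\subseteq\lfloor\Delta\rfloor$ (Proposition~\ref{prop: cone theorem for semi-stable fourfold over Spec R}(1)), every flipping and flipped locus meets $\lfloor\Delta\rfloor$, and termination follows at once from \emph{special termination} (Theorem~\ref{thm: special termination char p}). That theorem is the rigorous form of your ``vertical'' termination: it reduces, via the difficulty and adjunction bookkeeping of \cite{Fuj07}, to terminating threefold MMPs with scaling on the normalizations $E_i^\nu$ over the perfect field $k(s)$. No input from the generic fibre is needed, so the delicate question of terminating a threefold MMP over the imperfect field $K(C)$ is avoided entirely. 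One further correction: \cite[Theorem~1.2]{HW20} requires standard coefficients, so the existence of flips for an arbitrary dlt boundary needs the reduction argument of Theorem~\ref{thm: existence of flips for semi-stable fourfold over curve with general coef}, and the contraction of an extremal ray likewise needs Proposition~\ref{prop: L semiample mixed char} rather than being a direct citation.
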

	
	Note that \cite{HW20} proved the termination of MMP when $\kappa(K_X+\Delta\,/\,\Spec R)\ge0$ and $\Delta$ has standard coefficients. They use the effectivity to deduce the termination as in \cite{AHK07}. In fact, in this case it is known that any $(K_X+\Delta)$-MMP terminates.
	
	\medskip
	
	With some extra effort, we can extend Theorem \ref{thm: main thm mixed} to the case where $R$ is a Dedekind domain instead of a discrete valuation ring.
	
	\begin{cor}\label{cor: log terminal model in a neighborhood of s}
		Let $(X, \Delta)$ be a four-dimensional $\bQ$-factorial dlt pair projective over an excellent Dedekind scheme $V$. Assume that $X_{\Qq}\neq\emptyset$ and that every residue field of $V$ is perfect.
		Let $s \in V$ be a closed point and let $\phi: X \to V$ be the natural morphism.
		
		Suppose that $\Supp(\phi^{-1}(s)) \subseteq \lfloor \Delta \rfloor$ and that $k(s)$ has characteristic $p>5$. Then there exists an open neighborhood $U$ of $s$ in $V$ such that we can run a $(K_X+\Delta)$-MMP over $U$ which terminates with either a minimal model or a Mori fiber space.
	\end{cor}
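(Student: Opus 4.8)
The plan is to localize the problem at $s$, invoke Theorem~\ref{thm: main thm mixed} over the resulting discrete valuation ring, and then spread the MMP out to a neighbourhood of $s$. Since the conclusion is local around $s$, I may assume $V=\Spec R'$ with $R'$ an excellent Dedekind domain and $s$ the point defined by a maximal ideal $\mathfrak m$. If $\phi$ is not dominant then, $X$ being integral of dimension four, the scheme-theoretic image of $\phi$ is $\Spec k(t)$ for one closed point $t$, and the hypothesis $\Supp\phi^{-1}(s)\subseteq\lfloor\Delta\rfloor$ (a proper closed subset of $X$) forces $t\neq s$, so $\phi^{-1}(U)=\emptyset$ for a small $U\ni s$ and there is nothing to prove. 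So assume $\phi$ is dominant; then $V$ is integral, the fraction field of $R'$ embeds into $K(X)$, and since $X_{\bQ}\neq\emptyset$ the generic point of $X$ has residue characteristic zero, hence so has the fraction field of $R'$. Consequently $R:=R'_{\mathfrak m}=\cO_{V,s}$ is an excellent discrete valuation ring of mixed characteristic whose residue field $k(s)$ is perfect of characteristic $p>5$.

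I would then fix a $\phi$-ample divisor $A$ on $X$, set $(X_R,\Delta_R):=(X,\Delta)\times_V\Spec R$ and $A_R:=A|_{X_R}$, and observe that $(X_R,\Delta_R)$ satisfies the hypotheses of Theorem~\ref{thm: main thm mixed}: it is a four-dimensional $\bQ$-factorial dlt pair projective over $\Spec R$ with $\Supp\phi_R^{-1}(s)\subseteq\lfloor\Delta_R\rfloor$. This yields a $(K_{X_R}+\Delta_R)$-MMP with scaling of $A_R$ over $\Spec R$,
\[
X_R=X_R^{0}\dashrightarrow X_R^{1}\dashrightarrow\cdots\dashrightarrow X_R^{N},
\]
with finitely many steps, each a divisorial contraction $X_R^{i}\to Z_R^{i}$ or the flip of a flipping contraction $X_R^{i}\to Z_R^{i}\leftarrow X_R^{i+1}$, ending with a minimal model if $K_{X_R}+\Delta_R$ is pseudo-effective over $\Spec R$ and with a Mori fibre space otherwise.

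Since $R=\varinjlim_{s\in W}\Gamma(W,\cO_V)$ over affine opens $W\ni s$, and all the above data are of finite presentation over the base, standard spreading-out yields an affine open $U\ni s$ and a diagram $X_U=X_U^{0}\dashrightarrow\cdots\dashrightarrow X_U^{N}$ over $U$, where $X_U:=\phi^{-1}(U)$, together with morphisms $X_U^{i}\to Z_U^{i}$ and, in the flipping case, $X_U^{i+1}\to Z_U^{i}$, whose base change along $\Spec R\hookrightarrow U$ recovers the MMP above. After finitely many further shrinkings of $U$ I would arrange that $A_U:=A|_{X_U}$ is $\phi$-ample, that $-(K_{X_U^{i}}+\Delta_U^{i})$ is ample over $Z_U^{i}$ (and, in the flipping case, $K_{X_U^{i+1}}+\Delta_U^{i+1}$ is ample over $Z_U^{i}$) for every $i$, and that $K_{X_U^{i}}+\Delta_U^{i}+\lambda_iA_U^{i}$ is nef over $U$ for the scaling thresholds $\lambda_i$ used over $\Spec R$; here I use that ampleness of a fixed $\bQ$-Cartier divisor over the base is an open condition and that nefness over the base spreads out from $\Spec R$ to a neighbourhood of $s$, since the locus in $U$ where nefness on the fibre fails is constructible and does not contain the generic point.

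Starting from the $\bQ$-factorial $X_U^{0}=X_U$, which is open in the $\bQ$-factorial $X$, an induction then shows that each $X_U^{i}$ is $\bQ$-factorial and each $X_U^{i}\to Z_U^{i}$ is a genuine $(K_{X_U^{i}}+\Delta_U^{i})$-negative extremal contraction, provided we know that $\rho(X_U^{i}/Z_U^{i})=1$ rather than that $X_U^{i}\to Z_U^{i}$ contracts a higher-dimensional face of $\overline{\NE}(X_U^{i}/U)$; this last point is the main obstacle. It holds after base change to $\Spec R$, where the relative Picard number equals $\rho(X_R^{i}/Z_R^{i})=1$, and the restriction $N^{1}(X_U^{i}/Z_U^{i})\to N^{1}(X_R^{i}/Z_R^{i})$ becomes an isomorphism after one more shrinking of $U$: it is surjective, and any class in its kernel is numerically trivial over $\Spec R$, hence, by the same spreading-out of relative nefness, numerically trivial over a small enough $U$; as $N^{1}(X_U^{i}/Z_U^{i})$ has finite rank, a single shrinking handles all $i$. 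Consequently $X_U=X_U^{0}\dashrightarrow\cdots\dashrightarrow X_U^{N}$ is a $(K_{X_U}+\Delta_U)$-MMP over $U$ with only $N<\infty$ steps, so it terminates; its last model is a minimal model over $U$ in the pseudo-effective case and otherwise carries a Mori fibre space structure $X_U^{N}\to Z_U^{N}$ (relative Picard number one, positive relative dimension, $-(K_{X_U^{N}}+\Delta_U^{N})$ relatively ample). This is the required MMP over $U$.
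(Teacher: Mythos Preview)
Your overall strategy—run the MMP over the localization $\Spec R$ via Theorem~\ref{thm: main thm mixed} and then spread each step out to an open neighbourhood $U\ni s$—is natural and different from the paper's approach, which instead constructs each contraction directly over a shrinking neighbourhood of $s$ using the semiampleness result Proposition~\ref{prop: L semiample mixed char} together with Lemma~\ref{lem: semiample is an open condition}. However, your argument has a genuine gap at the step where you assert that relative nefness spreads out.

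Concretely, you claim that $K_{X_U^i}+\Delta_U^i+\lambda_iA_U^i$ remains nef over $U$ after shrinking, because ``the locus in $U$ where nefness on the fibre fails is constructible and does not contain the generic point''. This is not a valid argument: nefness in families is in general neither open nor constructible on the base (unlike ampleness), so constructibility must be proved, not asserted. The same issue recurs in your treatment of $\rho(X_U^i/Z_U^i)=1$, where you argue that a class numerically trivial over $\Spec R$ becomes numerically trivial over a small $U$ ``by the same spreading-out of relative nefness''. Without an independent input, both claims fail.

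The paper sidesteps exactly this problem by never relying on nefness spreading out. Instead it proves that the relevant nef divisors are \emph{semiample} over $\Spec R$ (Proposition~\ref{prop: L semiample mixed char}, using that the boundary is big after adding the scaling divisor), and semiampleness does propagate to a neighbourhood by Lemma~\ref{lem: semiample is an open condition}. This simultaneously produces the contraction $f:X\to W$ over $U$ and, via the second statement of Proposition~\ref{prop: L semiample mixed char}, the descent of $f$-numerically trivial divisors needed to get $\rho(X/W)=1$; the paper also invokes Lemma~\ref{lem: semi-stable in a neighborhood in mixed char} to ensure the dlt hypothesis holds fibrewise near $s$. If you want to salvage your spreading-out approach, you would need to import precisely these semiampleness inputs at each step, at which point the argument essentially becomes the paper's.
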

	
	With additional conditions on $X\to V$, we can even run a $(K_X+\Delta)$-MMP globally over $V$ in Proposition \ref{prop: MMP of pair such that (X,X_s) is dlt}. For example, as a special case, we have
	
	\begin{thm}\label{thm: MMP of strictly semi-stable fourfolds}
		Let $V$ be an excellent Dedekind scheme whose residue fields do not have characteristic 2, 3 or 5. Let $X$ be a strictly semi-stable and projective $V$-variety of relative dimension 3. Assume that $X_\Qq\neq\emptyset$ and that every residue field of $V$ is perfect. Then we can run a $K_{X}$-MMP over $V$ which terminates with either a minimal model or a Mori fiber space.
	\end{thm}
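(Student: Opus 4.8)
The plan is to deduce this from Corollary \ref{cor: log terminal model in a neighborhood of s} (and its global enhancement Proposition \ref{prop: MMP of pair such that (X,X_s) is dlt}) by checking that a strictly semi-stable $V$-variety of relative dimension $3$ fits into the setup with $\Delta=0$. First I would recall that strict semi-stability means that, étale-locally, $X\to V$ looks like $\Spec\bigl(R[x_0,\dots,x_n]/(x_0\cdots x_r-\pi)\bigr)\to\Spec R$ with $\pi$ a uniformizer; in particular $X$ is regular, hence $\bQ$-factorial (indeed factorial), and for every closed point $s\in V$ the fiber $X_s$ is a reduced simple normal crossing divisor on $X$. Therefore the pair $(X, X_s)$ is log smooth, so in particular dlt, and $K_X+X_s\sim_{V} K_X$ near $s$ since $X_s$ is a fiber (it is $\phi$-numerically trivial and $\sim 0$ locally over $V$). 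This is the key reduction: running a $K_X$-MMP over $V$ is the same as running a $(K_X+X_s)$-MMP locally over $s$, and $(X,X_s)$ is a four-dimensional $\bQ$-factorial dlt pair with $\Supp(\phi^{-1}(s))=X_s=\lfloor X_s\rfloor$, so the hypotheses of Corollary \ref{cor: log terminal model in a neighborhood of s} are met.

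Next I would run the argument one closed point at a time. Since $X_\Qq\neq\emptyset$, only finitely many closed points $s_1,\dots,s_m\in V$ have residue characteristic in $\{2,3,5\}$ — wait, the hypothesis already excludes characteristics $2,3,5$ entirely, so in fact \emph{every} closed point $s\in V$ has $k(s)$ perfect of characteristic $p>5$ (or characteristic $0$). For the finitely many points of bad-reduction type, i.e.\ where the fiber is actually singular or where the MMP can contract something, apply Corollary \ref{cor: log terminal model in a neighborhood of s} at each such $s$ to get an open neighborhood $U_s$ over which the $(K_X+X_{s})$-MMP (equivalently the $K_X$-MMP, as above) terminates with a minimal model or a Mori fiber space. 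Away from the (finitely many) special fibers of interest the morphism $X\to V$ is smooth with $K_X$ already relatively minimal on those fibers, so no flips or divisorial contractions are needed there. The point is then to glue: one uses Proposition \ref{prop: MMP of pair such that (X,X_s) is dlt}, whose extra hypotheses on $X\to V$ (semi-stability provides exactly a globally dlt structure $(X,\sum_i X_{s_i})$) are designed to let the local MMPs be performed simultaneously and globally over $V$ rather than just over a neighborhood of a single $s$.

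The main obstacle is the gluing/globalization step: Corollary \ref{cor: log terminal model in a neighborhood of s} only gives an MMP over an open neighborhood $U\ni s$, and a priori the steps chosen over different $U_{s_i}$ need not be compatible on overlaps, nor need a neighborhood MMP extend to all of $V$. Handling this is exactly the content of Proposition \ref{prop: MMP of pair such that (X,X_s) is dlt}; I would invoke it by verifying that for strictly semi-stable $X$ the divisor $D:=\sum_{i} X_{s_i}$ (sum over the relevant closed points) makes $(X,D)$ globally $\bQ$-factorial dlt with $\phi^{-1}(s_i)\subseteq\lfloor D\rfloor$ for each $i$, and that $K_X\sim_V K_X+D$ up to the contributions supported over the $s_i$, so that a single $(K_X+D)$-MMP over $V$ restricts to the desired $K_X$-MMP near each special point and is an isomorphism elsewhere. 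Granting that proposition, the theorem follows: the resulting model is a minimal model if $K_X$ is relatively pseudo-effective over $V$, and a Mori fiber space otherwise. $\qed$
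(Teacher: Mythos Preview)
Your proposal eventually lands on the correct tool, Proposition \ref{prop: MMP of pair such that (X,X_s) is dlt}, but the route is unnecessarily circuitous and your proposed invocation of that proposition is miscast.

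The paper's proof is one line: by strict semi-stability, $X$ is regular (hence $\bQ$-factorial) and $(X,X_t)$ is snc, in particular dlt, for every closed point $t\in V$; so Proposition \ref{prop: MMP of pair such that (X,X_s) is dlt} applies directly with $\Delta=0$. There is no need to first apply Corollary \ref{cor: log terminal model in a neighborhood of s} point by point and then worry about gluing; the whole purpose of Proposition \ref{prop: MMP of pair such that (X,X_s) is dlt} is that it already works globally over $V$.

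Your plan to invoke Proposition \ref{prop: MMP of pair such that (X,X_s) is dlt} with the boundary $D:=\sum_i X_{s_i}$ conflates the hypotheses of the two results. That proposition does \emph{not} ask for $\Supp(\phi^{-1}(s))\subseteq\lfloor\Delta\rfloor$ (that is the hypothesis of Corollary \ref{cor: log terminal model in a neighborhood of s}); it asks that $(X,\Delta+X_t)$ be dlt for \emph{every} closed $t\in V$. With your choice $\Delta=D$, taking $t=s_j$ makes $X_{s_j}$ appear in $D+X_{s_j}$ with coefficient $2$, so this is not even a boundary. The fix is simply $\Delta=0$: then $(X,X_t)$ is dlt for all $t$ by strict semi-stability, and the proposition applies at once. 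Your side remark that on the smooth fibers ``$K_X$ is already relatively minimal\ldots so no flips or divisorial contractions are needed there'' is also unjustified (a smooth fiber says nothing about relative nefness of $K_X$), but it becomes irrelevant once the proposition is applied correctly.
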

	
	Using the same strategy, we can obtain a similar result in positive characteristic from Proposition \ref{prop: MMP for big pair over C in char>0}.
	
	\begin{thm}\label{thm: MMP for strictly semi-stable fourfolds over C in char>0}
		Let $C$ be a smooth curve over a perfect field with characteristic $p>5$. Let $X$ be a strictly semi-stable and projective $C$-variety of relative dimension 3. Assume $K_X$ is big over $C$. Then we can run a $K_X$-MMP over $C$ which terminates with a good minimal model.
	\end{thm}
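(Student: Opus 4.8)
The plan is to obtain the theorem as the specialization of Proposition~\ref{prop: MMP for big pair over C in char>0} to the strictly semi-stable case; accordingly, the real work is only to translate the strictly semi-stable hypothesis into the input required by that proposition, whose local ingredient over each closed point of $C$ is Theorem~\ref{thm: main thm char p}.

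First I would record the local geometry. Since $X\to C$ is strictly semi-stable, $X$ is regular---hence $\bQ$-factorial, and $(X,0)$ is terminal---and for every closed point $s\in C$ the fibre $X_s=\phi^{-1}(s)$ is a reduced simple normal crossing divisor, so $(X,X_s)$ is snc, in particular dlt; moreover $\phi$ is smooth over a dense open subset of $C$, so its non-smooth locus lies over finitely many closed points $s_1,\dots,s_m$. I would then set $\Delta:=\sum_{i=1}^{m}X_{s_i}$, so that $(X,\Delta)$ is a four-dimensional $\bQ$-factorial dlt pair, projective over $C$, with $\Supp(\phi^{-1}(s_i))\subseteq\lfloor\Delta\rfloor$ for every $i$, and such that $(X,\Delta)$ is snc and so is $(X,X_s)$ for every closed point $s$. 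The point is that $\Delta$ does not affect the relative MMP: every curve $\gamma$ contracted over $C$ satisfies $\phi^{*}(D)\cdot\gamma=0$ for all divisors $D$ on $C$, hence $(K_X+\Delta+\lambda A)\cdot\gamma=(K_X+\lambda A)\cdot\gamma$; since the push-forward of a fibre under a birational contraction over $C$ is again a fibre, each boundary occurring along the MMP stays a sum of fibres, so a $(K_X+\Delta)$-MMP over $C$ with scaling of $A$ and a $K_X$-MMP over $C$ with scaling of $A$ coincide step by step, and their outputs are simultaneously minimal (a curve $\gamma$ contracted over $C$ on the output $X'$ has $K_{X'}\cdot\gamma=(K_{X'}+\Delta')\cdot\gamma$ because $\Delta'$ is again a sum of fibres). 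On the localization $R_i:=\cO_{C,s_i}$ one also has $X_{s_i}=\operatorname{div}(\phi^{*}t_i)$ for a uniformizer $t_i$, so $K_X+X_{s_i}\sim K_X$ over $\Spec R_i$.

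Next, bigness of $K_X$ over $C$ implies pseudo-effectivity over $C$, and relative bigness (hence pseudo-effectivity) of $K_X$ over each $\Spec R_i$. Applying Theorem~\ref{thm: main thm char p} to $(X,X_{s_i})\times_C\Spec R_i$ yields a $K_X$-MMP over $\Spec R_i$ with scaling of $A$ that terminates with a minimal model; Proposition~\ref{prop: MMP for big pair over C in char>0}, applied to $(X,\Delta)$, glues these finitely many local MMPs into a single $K_X$-MMP over $C$ with scaling of $A$ that terminates. Finally, the output $X'$ carries a relatively big and nef $K_{X'}$, so by the base-point-free theorem for big and nef classes in characteristic $p>5$ (part of the conclusion of Proposition~\ref{prop: MMP for big pair over C in char>0}) $K_{X'}$ is relatively semiample; thus $X'$ is a \emph{good} minimal model. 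Relative bigness is also what guarantees that no Mori fibre space is ever produced, so, unlike in the mixed characteristic case, there is no need to invoke the (still open) existence of Mori fibre spaces for threefolds over imperfect fields.

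The main obstacle is not this reduction---which is essentially bookkeeping once the strictly semi-stable structure has been unwound---but the gluing encapsulated in Proposition~\ref{prop: MMP for big pair over C in char>0}: one must arrange the global MMP over $C$ so that it decomposes into the $m$ local MMPs over $\Spec R_1,\dots,\Spec R_m$, with compatible scaling thresholds, and check that a step performed over one $s_i$ preserves $\bQ$-factoriality, the dlt condition, and relative pseudo-effectivity over the remaining $s_j$ and over the smooth part of $\phi$; and then establish relative semiampleness of the resulting big nef model. This is where the results of the earlier sections, and in particular Theorem~\ref{thm: main thm char p}, are genuinely used.
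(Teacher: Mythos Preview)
Your overall strategy---deduce the theorem from Proposition~\ref{prop: MMP for big pair over C in char>0}---is exactly what the paper does, and your final paragraph correctly identifies that the substantive work lives inside that proposition. The gap is in your choice of boundary.

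You set $\Delta=\sum_{i=1}^{m}X_{s_i}$ and then invoke Proposition~\ref{prop: MMP for big pair over C in char>0} for $(X,\Delta)$. But that proposition has two hypotheses your $\Delta$ fails. First, it asks that $(X,\Delta)$ be \emph{klt}; your $\Delta$ is a reduced snc divisor, so $(X,\Delta)$ is only dlt, with $\lfloor\Delta\rfloor\neq 0$. Second, and more seriously, it requires $(X,\Delta+X_t)$ to be dlt for \emph{every} closed $t\in C$; taking $t=s_j$ gives a boundary in which each component of $X_{s_j}$ appears with coefficient $2$, so the pair is not even log canonical. Your careful (and correct) argument that $\Delta$ is numerically trivial on vertical curves, so that the $(K_X+\Delta)$-MMP and the $K_X$-MMP coincide, does not repair this: you still need some engine to run the MMP, and the proposition as stated does not apply to your pair.

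The fix is to take $\Delta=0$. This is the paper's one-line proof: strict semi-stability makes $X$ regular, hence $(X,0)$ is klt and $\bQ$-factorial, and $(X,X_t)$ is snc (hence dlt) for every closed $t$; Proposition~\ref{prop: MMP for big pair over C in char>0} then applies directly with $\Delta=0$. The confusion likely comes from the \emph{local} Theorem~\ref{thm: main thm char p}, which does require $\Supp(\phi^{-1}(s))\subseteq\lfloor\Delta\rfloor$; but in the proof of the global Proposition~\ref{prop: MMP for big pair over C in char>0} the fibre $X_t$ is added to the boundary at the moment one localizes, so you do not need to insert it into $\Delta$ beforehand. All of your bookkeeping about vertical divisors, while correct, is therefore unnecessary.
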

	
	\noindent\textbf{Acknowledgement}. The authors would like to thank their advisor Christopher D. Hacon for introducing the problem, giving encouragements and answering questions. The authors would also like to thank Jakub Witaszek for reading the paper and answering questions, and Jingjun Han and Jihao Liu for useful discussions. The authors were partially supported by NSF research grants no: DMS-1801851, DMS-1952522 and by a grant from the Simons Foundation; Award Number: 256202. Finally, the authors are grateful to the referees for many valuable comments and suggestions.

	\section{Preliminaries}
	A scheme $X$ will be called a \emph{variety} over a field $k$ (resp. over $\Spec R$, where $R$ is a discrete valuation ring, or DVR for short) if it is integral, separated, and of finite type over $k$ (resp. $\Spec R$).
	
	We refer the reader to \cite{KM98,Kol13} for the standard definitions and results in the Minimal Model Program and to \cite{BMP+20} for those in mixed characteristic.
	
	In this paper, a \emph{pair} $(X,B)$ consists of a normal variety $X$ and an effective $\bR$-divisor $B$ such that $K_X+B$ is $\bR$-Cartier. The pair $(X,B)$ is \emph{Kawamata log terminal (klt)} (resp. \emph{log canonical (lc)}) if for any proper birational morphism $f:X\to Y$ and any prime divisor $E$ on $Y$ we have $\mult_E(B_Y)<1$ (resp. $\mult_E(B_Y )\le1$), where $K_Y+B_Y=f^*(K_X+B)$. If $(X,B)$ admits a log resolution $f:Y\to X$, then it suffices to check the above condition for all prime divisors $E$ on $Y$.
	
	For a pair $(X,B)$ such that $B=\sum b_iB_i$ with $0\le b_i\le1$, we say that $(X,B)$ is \emph{divisorially log terminal (dlt)} if there exists an open subset $U\subseteq X$ such that $U$ is smooth and $\Supp(B|_U)$ is simple normal crossing, and for every proper birational morphism $f:Y\to X$ and any prime divisor $E$ on $Y$ with center $Z$ contained in $X\setminus U$, we have $\mult_E(B_Y)<1$. We say that $a_E(X,B):=1-\mult_E(B_Y)$ is the \emph{log dicrepancy} of $(X, B)$ along $E$. A pair $(X,S+B)$ with $\lf S + B\rf=S$ irreducible, is \emph{purely log terminal (plt)} if $a_E(X, B)> 0$ for any $E\neq S$.\par
	
	A morphism of schemes $f:X\to Y$ is a \emph{universal homeomorphism} if for any morphism $Y'\to Y$, the induced morphism $X'=X\times_YY'\to Y'$ is a homeomorphism. We say that a variety $X$ is \emph{normal up to universal homeomorphism} if its
	normalization $X^\nu\to X$ is a universal homeomorphism.
	
	We say that $\Delta$ has \emph{standard coefficients} if the coefficients of $\Delta$ are contained in $\{1\}\cup\{1-\frac{1}{m}~|~m\in\Zz_{>0}\}$.
	
	We say that a morphism of schemes $f:X\to Z$ is a \emph{contraction} if it is proper, surjective, and $f_*\Oo_X=\Oo_Z$.
	
	\medskip
	
	We first give the definition of strictly semi-stable morphisms.
	
	\begin{defn}(strictly semi-stable)
		\begin{enumerate}
			\item Let $V$ be the spectrum of a DVR $R$. Let $\omega$ be a uniformizer
			of $R$. A flat $V$-variety $X$ of relative dimension $n$ is called \emph{strictly semi-stable} if the following hold.
			\begin{itemize}
				\item The generic fiber $X_\eta$ is smooth, where $\eta\in V$ is the generic point.
				\item For any closed point $x$ in the special fiber $X_s$, there exists a Zariski open neighborhood $U$ of $x$ such that $U$ is \'etale over the scheme
				$\Spec R[X_0,...,X_n]/(X_1\cdots X_m-\omega)$ for some $m\le n$.
			\end{itemize}
			
			As in \cite[2.16]{dJ96}, if $R$ has a perfect residue field, the above definition is equivalent to that $(X,X_s)$ is a simple normal crossing pair.
			\item Let $V$ be a Dedekind scheme. An integral flat quasi-projective $V$-variety $X$ of relative dimension $n$ is called \emph{strictly semi-stable} if $X_{\mathcal{O}_{V,s}}\to \Spec\mathcal{O}_{V,s}$ is strictly semi-stable for any closed point $s\in V$.
		\end{enumerate}
	\end{defn}
	
	\smallskip
	
	\begin{defn}
	    Let $\pi: X \to U$ be a projective morphism of normal varieties. Let $D$ be an $\bR$-divisor on $X$. The \emph{stable base locus} of $D$ over $U$ is the Zariski closed set $\mathbf{B}(D/U)$ given by the intersection of the support of the elements of the real linear system $|D/U|_\bR$. The \emph{augmented base locus} of $D$ over $U$ is the Zariski closed set
	    $$
	    \mathbf{B}_+(D/U) = \mathbf{B}((D-\epsilon A)/U)
	    $$
	    for any ample divisor $A$ over $U$ and any sufficiently small rational number $\epsilon>0$.
	\end{defn}
	
	When running the MMP with scaling of an ample divisor $A$, the ampleness of $A$ will not be preserved. However, the properties that $A \ge 0$ is big and $\mathbf{B}_+(A)$ does not contain any non-klt centers will be preserved by \cite[Lemma 3.10.11]{BCHM10}. These properties are already sufficient in most situations.
	
	\begin{lem}[{\cite[Lemma 2.29]{BMP+20}}, cf. {\cite[Lemma 9.2]{Bir16}}]\label{lem: ample perturb}
	    Let $\pi: (X, \Delta) \to \Spec R$ be a projective morphism from a klt (resp. dlt) pair over a Noetherian local domain. Suppose that $A$ is an ample $\bR$-divisor on $X$. Then there exists an $\bR$-divisor $0 \le A' \sim_\bR A$ such that $(X, \Delta+A')$ is klt (resp. dlt).
	\end{lem}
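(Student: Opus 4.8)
The plan is a Bertini-type argument. Since $X$ is projective over the Noetherian local domain $R$, the ample $\bR$-divisor $A$ is $\bR$-linearly equivalent to a positive real combination of very ample Cartier divisors, so I would first fix a presentation $A \sim_{\bR} \sum_{i=1}^{r} a_i A_i$ with $a_i \in \bR_{>0}$ and each $A_i$ very ample. Next I fix a log resolution $f \colon Y \to X$ of $(X,\Delta)$ --- by the standing assumption, a composition of blow-ups along the non-snc locus --- and I record
\[
K_Y + f^{-1}_{*}\Delta \;=\; f^{*}(K_X+\Delta) + \sum_{j}\bigl(a_{E_j}(X,\Delta)-1\bigr)E_j ,
\]
the sum running over the $f$-exceptional prime divisors $E_j$. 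Since $(X,\Delta)$ is klt (resp.\ dlt), the coefficients of $f^{-1}_{*}\Delta$ are $<1$ (resp.\ lie in $[0,1]$) and $a_{E_j}(X,\Delta)>0$ for every $j$.

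I would then perturb $\Delta$ by general members of high multiples of the $A_i$. Choose $N_i \gg 0$ with $N_iA_i$ very ample and $\sum_i a_i/N_i<1$, and take a general $D_i \in |N_iA_i|$. Since $N_iA_i$ is base point free, so is the pulled-back linear system $f^{*}|N_iA_i|$ on $Y$; hence for general $D_i$ the total transform $f^{*}D_i$ has no $f$-exceptional component, i.e.\ $f^{*}D_i=f^{-1}_{*}D_i$, and $f^{-1}_{*}D_i$ is smooth, meets $f^{-1}_{*}\Delta + \sum_j E_j$ transversally, and is not a component of $\Delta$. Set $A' := \sum_i \frac{a_i}{N_i}D_i \ge 0$, so that $A' \sim_{\bR} \sum_i a_iA_i \sim_{\bR} A$. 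Then $f^{-1}_{*}\Delta + f^{-1}_{*}A' + \sum_j E_j$ is simple normal crossing, so $f$ is also a log resolution of $(X,\Delta+A')$; and because $f^{*}A'=f^{-1}_{*}A'$ carries no $E_j$, one gets $a_{E_j}(X,\Delta+A')=a_{E_j}(X,\Delta)>0$ for every $j$, while $\lfloor\Delta+A'\rfloor=\lfloor\Delta\rfloor$. Combining this with the coefficient bounds recorded above, $(X,\Delta+A')$ is klt in the klt case, and is dlt in the dlt case by the standard characterization of dlt singularities via log resolutions \cite{KM98,Kol13}.

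The one genuinely delicate point --- which I expect to be essentially the whole difficulty --- is the appeal to Bertini's theorem, which guarantees a sufficiently general $D_i$ defined over $k(s)$ only when that residue field is infinite, whereas in the applications of this paper $k(s)$ may be finite. I would handle this by base changing along the faithfully flat local homomorphism $R \hookrightarrow R' := R[t]_{\mathfrak{m}_R R[t]}$, whose residue field is $k(s)(t)$ (infinite) and whose fibres are regular, so that $(X,\Delta)\times_R R'$ is still klt (resp.\ dlt) and admits the base change of $f$ as a log resolution; the argument above then runs over $R'$, and one descends the resulting family of divisors to $\Spec R$ and specializes at a general value of the parameter. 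Alternatively, over a finite residue field one argues directly --- it suffices to choose each $D_i$ to avoid the finitely many generic points of the intersection strata of $f^{-1}_{*}\Delta + \sum_j E_j$ and of $\Sing X$, together with a high-power argument securing transversality --- or one replaces the Bertini step altogether by the analogous statement for test ideals in mixed characteristic, cf.\ \cite{BMP+20}. Once this is in place, the geometric content of the lemma reduces to the single identity $a_{E_j}(X,\Delta+A')=a_{E_j}(X,\Delta)$, valid precisely because $A'$ was chosen to have no component over the exceptional locus of the resolution.
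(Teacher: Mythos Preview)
The paper does not give a proof of this lemma: it is quoted verbatim from \cite[Lemma~2.29]{BMP+20} (with a cross-reference to \cite[Lemma~9.2]{Bir16}) and no argument is supplied. So there is nothing to compare against except the bare citation.

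Your Bertini argument is the standard one and is fine when the residue field $k(s)$ is infinite. The gap is exactly where you locate it, but your proposed fix does not close it. After base-changing to $R'=R[t]_{\mathfrak{m}_R R[t]}$ and finding good divisors there, you spread out to some $\Spec R[t]_g\subset\mathbb{A}^1_R$ and then want to ``specialize at a general value of the parameter''. Concretely this means choosing $r\in R$ with $g(r)\in R^{\times}$, equivalently $\bar g(\bar r)\neq 0$ in $k(s)$; when $k(s)$ is finite and $\deg\bar g\ge |k(s)|$ there need be no such $r$, so the descent step fails as written. More invariantly: the good locus is an open $U$ in a product of projective spaces over $R$, and $U(R)\neq\emptyset$ reduces (since $R$ is local and $\mathbb{P}^n$ is smooth) to $U_s(k(s))\neq\emptyset$; for finite $k(s)$ a nonempty open in $\mathbb{P}^n_{k(s)}$ can certainly miss all rational points. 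Your ``direct'' alternative has the same issue: avoiding finitely many generic points in a base-point-free system is easy over any base, but \emph{transversality} of $f^{-1}_*D_i$ with $f^{-1}_*\Delta+\sum_jE_j$ is a genuine Bertini statement and is what fails over finite fields. Without it $f$ is no longer a log resolution of $(X,\Delta+A')$, so the discrepancy identity $a_{E_j}(X,\Delta+A')=a_{E_j}(X,\Delta)$ no longer suffices.

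Your final fallback --- replace the Bertini step by the mixed-characteristic test-ideal machinery of \cite{BMP+20} --- is, in effect, exactly what the paper does: defer to \cite[Lemma~2.29]{BMP+20}. That is perfectly acceptable, but then the preceding two paragraphs of your plan constitute a proof only under the extra hypothesis that $k(s)$ is infinite, with the general case still outsourced to the reference.
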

	
	With the help of Lemma \ref{lem: ample perturb}, we are able to perturb dlt pairs by ample divisors, similar to what we usually do in characteristic 0.
	
	\medskip
	
	
	The following lemma on the existence of dlt modifications is very useful.
	
	\begin{lem}[{\cite[Corollary 4.7]{HW20}}]\label{lem: dlt modification for fourfolds with general coef}
		Let $(X,\Delta)$ be a four-dimensional $\Qq$-factorial log pair defined over a perfect field $k$ of characteristic $p>5$ or a DVR of characteristic $(0,p)$ for $p>5$ with perfect residue field. Then there exists a dlt modification of $(X,\Delta)$, that is, a projective birational morphism $\pi:Y\to X$ such that $(Y,\pi^{-1}_*\Delta+\Exc(\pi))$ is dlt and $\Qq$-factorial, and $K_Y + \pi^{-1}_*\Delta+\Exc(\pi)$ is nef over $X$.
	\end{lem}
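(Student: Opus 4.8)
The plan is the standard dlt-modification construction, carried out inside the fourfold MMP now available under the stated hypotheses. By our running assumption $(X,\Delta)$ admits a log resolution $f\colon Y\to X$ obtained by successively blowing up the non-snc locus, so that the union of the $f$-exceptional prime divisors together with $f^{-1}_*\Delta$ is snc. I would set $\Gamma:=f^{-1}_*\Delta+\Exc(f)$, with $\Exc(f)$ the reduced $f$-exceptional divisor; then $(Y,\Gamma)$ is log smooth, hence $\bQ$-factorial and dlt. Writing $K_Y+\Delta_Y=f^*(K_X+\Delta)$ and using $\mult_E\Delta_Y=1-a_E(X,\Delta)$ on the $f$-exceptional divisors $E$, one obtains
\[
K_Y+\Gamma \;=\; f^*(K_X+\Delta)+\sum_{E}a_E(X,\Delta)\,E,
\]
the sum over $f$-exceptional prime divisors. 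Writing $G$ for this last ($f$-exceptional, not necessarily effective) $\bR$-divisor, we have $K_Y+\Gamma\sim_{\bR,X}G$.

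Next I would run a $(K_Y+\Gamma)$-MMP over $X$ with scaling of a sufficiently general $f$-ample $\bR$-divisor $H$ for which $(Y,\Gamma+H)$ is dlt (Lemma~\ref{lem: ample perturb}). Every step is a divisorial contraction or flip of a $\bQ$-factorial dlt fourfold projective over $X$, and both operations preserve $\bQ$-factoriality and dlt-ness. The maps to $X$ stay birational, and any contracted divisor is sent to a subvariety of codimension $\ge 2$ in the next model and a fortiori in $X$; hence only divisors exceptional over $X$ are ever contracted, and at each stage $\pi_i\colon Y_i\to X$ the boundary is the strict transform of $\Gamma$, namely $(\pi_i)^{-1}_*\Delta+\Exc(\pi_i)$. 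Moreover the MMP cannot terminate with a Mori fiber space over $X$: that would factor the birational map $Y_i\to X$ through a variety of dimension $<\dim X$, contradicting dominance. So, if it terminates, the MMP produces $\pi\colon Y'\to X$ with $(Y',\pi^{-1}_*\Delta+\Exc(\pi))$ $\bQ$-factorial dlt and $K_{Y'}+\pi^{-1}_*\Delta+\Exc(\pi)$ nef over $X$, which is precisely the asserted dlt modification.

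The crux, and the main obstacle, is termination of this MMP for a fourfold. Here I would exploit that the base $X$ is birational to $Y$: special termination is available, and only the finitely many prime divisors exceptional over $X$ can ever be contracted, so after finitely many divisorial contractions the MMP is a sequence of flips; this reduces the statement to the existence of flips for $\bQ$-factorial dlt fourfolds and to termination of fourfold flips in the relative birational setting, which is where $p>5$ and perfectness of the residue field are genuinely used and which is the content of the fourfold MMP developed in \cite{HW20}.
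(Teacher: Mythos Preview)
Your proposal is correct and matches the paper's approach: the paper treats this lemma as a citation of \cite[Corollary 4.7]{HW20} and gives no proof in the body, while a commented-out one-line sketch says exactly what you do---take a log resolution $f\colon W\to X$ and obtain the dlt modification as a minimal model of $(W,f^{-1}_*\Delta+\Exc(f))$ over $X$ via \cite[Theorem 1.1]{HW20}. Your write-up simply unpacks that reference, and the termination step you flag (special termination plus the fact that, since $X$ is $\bQ$-factorial, every flipping locus over $X$ lies in the exceptional divisors, all of which sit in $\lfloor\Gamma\rfloor$) is precisely the mechanism behind the cited result.
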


	We state a result on the semiampleness in mixed characteristic from \cite{Wit21}.
	
	\begin{thm}[{\cite[Theorem 1.2]{Wit21}}]\label{thm: wit21 1.2}
		Let $X$ be a scheme admitting a proper morphism $\pi: X \to S$ to an excellent scheme $S$ and let $L$ be a line bundle on $X$. Then $L$ is semiample if and only if $L|_{X_\bQ}$ is semiample and $L|_{X_s}$ is semiample for every point $s \in S$ having positive characteristic residue field.
	\end{thm}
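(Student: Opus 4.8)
The ``only if'' direction is immediate: semiampleness is preserved under arbitrary base change, since a finite set of global sections generating $L^m$ restricts to one generating the pullback, and both $X_\bQ\to X$ and $X_s\hookrightarrow X$ are base changes of $X$.

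For the converse, first note that $L$ is nef on every fibre of $\pi$: any proper integral curve contracted by $\pi$ lies in some fibre $X_t$, and $L|_{X_t}$ is the restriction to a closed subscheme of either $L|_{X_\bQ}$ (if $t$ has residue characteristic $0$, so that $X_t\subseteq X_\bQ$) or of $L|_{X_s}$ for a suitable closed point $s$ (if $t$ has positive residue characteristic), hence is semiample, hence nef. One may also assume $S$ is not a $\bQ$-scheme, since otherwise $X=X_\bQ$ and there is nothing to prove. The plan is to run, over $S$, the dichotomy underlying Keel's basepoint-freeness theorem: a line bundle nef on the fibres of $\pi$ is semiample as soon as it admits a proper $S$-morphism with connected fibres contracting precisely the curves on which it has degree zero, and its restriction to the non-big locus $\mathbb E(L)$ --- the union of the positive-dimensional integral subschemes of $X$ on which $L$ fails to be big --- is semiample. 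The argument is by induction on $\dim X$, the base case $\dim X = 0$ being trivial.

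If $L$ is not big, one constructs a contraction $c:X\to T$ over $S$ with $\dim T<\dim X$ and $L\cong c^*M$ for some nef and big $M$ on $T$: over $X_\bQ$ the required contraction is the semiample fibration of $L|_{X_\bQ}$, over each positive-characteristic fibre it is supplied by Keel's theorem through its use of the Frobenius, and these are glued along the formal completions of $S$ at its closed points and algebraized --- by Grothendieck's existence theorem and Artin's representability criteria --- to a global morphism $c:X\to T$ onto an algebraic space that one checks to be a scheme. Since $c$ is a contraction, $M|_{T_\bQ}$ and $M|_{T_s}$ inherit semiampleness, so the inductive hypothesis applied to $(T,M)$ shows that $M$, and hence $L=c^*M$, is semiample. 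If instead $L$ is big, then $\mathbb E(L)$ is a proper closed subscheme of $X$, and the hypotheses of the theorem are inherited by $(\mathbb E(L),L|_{\mathbb E(L)})$ --- restrictions of semiample divisors are semiample, and the restriction of $L|_{X_s}$ to its own non-big locus is semiample by Keel's theorem on $X_s$ --- so by induction $L|_{\mathbb E(L)}$ is semiample; one then concludes by the mixed-characteristic form of Keel's theorem, that a nef and big line bundle $L$ with $L|_{\mathbb E(L)}$ and $L|_{X_\bQ}$ semiample is semiample.

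The main obstacle is precisely this mixed-characteristic Keel machinery, invoked twice above: gluing Keel's Frobenius-based contractions over the positive-characteristic fibres with the characteristic-zero contraction over $X_\bQ$ into a single morphism over $S$ and algebraizing it, and proving the basepoint-freeness statement for nef and big line bundles uniformly across characteristics. This is the technical core of \cite{Wit21}; granting it, the reduction to the nef case, the dichotomy, and the induction on $\dim X$ are routine.
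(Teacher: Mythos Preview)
The paper does not prove this statement: Theorem~\ref{thm: wit21 1.2} is quoted verbatim from \cite[Theorem~1.2]{Wit21} and used as a black box (for instance in the proof of Proposition~\ref{prop: Kx semiample mixed char}). There is therefore no ``paper's own proof'' to compare your proposal against.

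That said, your sketch is a reasonable high-level summary of the strategy in \cite{Wit21}: the core is indeed a mixed-characteristic extension of Keel's theorem, and you correctly identify the gluing and algebraization of the fibrewise contractions as the main technical burden. One caution: your dichotomy on bigness of $L$ and the construction of a global contraction $c:X\to T$ in the non-big case presuppose exactly the kind of relative semiample fibration whose existence is the content of the theorem, so as written the inductive step in that branch is close to circular; in \cite{Wit21} the argument is organized somewhat differently, working through an intermediate notion (EWM, ``endowed with a map'') and a careful analysis of thickenings along positive-characteristic fibres rather than a clean big/not-big split. Your sketch is fine as an orientation, but it should not be read as a self-contained proof.
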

	
	
	
	In order to generalize some results from algebraically closed fields to perfect fields, we need the following base change result.
	
	\begin{lem}\label{lem: base change from perfect field to algebraically closed field}
		Let $(X,\Delta)$ be a dlt (resp. klt, plt, lc) geometrically integral log pair over a perfect field $k$. Then $(X_{\bar{k}},\Delta_{\bar{k}})$ is also dlt (resp. klt, plt, lc) over $\bar{k}$, where $X_{\bar{k}}$ and $\Delta_{\bar{k}}$ are the base changes of $X$ and $\Delta$ to $\bar{k}$ respectively.
	\end{lem}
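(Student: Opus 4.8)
The plan is to reduce to a finite Galois base change and then pass to a limit. Since $k$ is perfect, $\bar k/k$ is a (possibly infinite) Galois extension, so we may write $\bar k=\varinjlim_i k_i$ as a filtered colimit of finite Galois subextensions $k_i/k$. For each $i$ the morphism $\Spec k_i\to\Spec k$ is finite étale, hence so is $\pi_i\colon X_{k_i}\to X$; here $X_{k_i}$ is integral because $X$ is geometrically integral, and it is normal because étale morphisms preserve normality. As $X_{\bar k}=\varprojlim_i X_{k_i}$ is a Noetherian scheme whose local rings are filtered colimits of normal local domains (along injective transition maps), $X_{\bar k}$ is normal as well. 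Since $\pi_i$ is étale we have $K_{X_{k_i}}=\pi_i^\ast K_X$, so $K_{X_{k_i}}+\Delta_{k_i}=\pi_i^\ast(K_X+\Delta)$ is $\bR$-Cartier, and likewise $K_{X_{\bar k}}+\Delta_{\bar k}$ is $\bR$-Cartier; thus $(X_{k_i},\Delta_{k_i})$ and $(X_{\bar k},\Delta_{\bar k})$ are genuine log pairs. It therefore suffices to prove the four implications first for each $X\rightsquigarrow X_{k_i}$ and then in the colimit.

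For the finite étale step I would invoke the behaviour of log discrepancies under étale morphisms: if $E'$ is a prime divisor over $X_{k_i}$ and $E$ is the prime divisor over $X$ determined by restricting $\mathrm{ord}_{E'}$ to $k(X)$, then $a_{E'}(X_{k_i},\Delta_{k_i})=a_E(X,\Delta)$, because $\pi_i$ is unramified, so that no ramification index intervenes and the coefficients appearing in the relevant log pullbacks are unchanged (cf.\ \cite{KM98}). Hence every log discrepancy realized over $X_{k_i}$ is already realized over $X$, so the conditions ``$a_E>0$ for all $E$'' (klt) and ``$a_E\ge0$ for all $E$'' (lc) pass to $(X_{k_i},\Delta_{k_i})$; the same holds for plt via its equivalent formulation ``lc, with every log canonical place a divisor on the variety'' — a description insensitive to whether $\lfloor\Delta\rfloor$ is irreducible — using that $\pi_i$ is finite, so that the centers of $E'$ and $E$ have equal dimension. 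For dlt I would pull back the open set $U\subseteq X$ witnessing the definition: $U_{k_i}=\pi_i^{-1}(U)$ is smooth, and since $k$ is perfect the base change of a simple normal crossing divisor on a smooth $k$-scheme is again simple normal crossing, so $\Supp(\Delta_{k_i}|_{U_{k_i}})$ is simple normal crossing; while any prime divisor $E'$ over $X_{k_i}$ with center in $X_{k_i}\setminus U_{k_i}$ maps to a prime divisor $E$ over $X$ with center in $X\setminus U$, whence $\mult_{E'}\bigl((\Delta_{k_i})_{Y_i}\bigr)=\mult_E(\Delta_Y)<1$.

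It remains to pass to the colimit. Any normal proper birational model $g'\colon Y'\to X_{\bar k}$ carrying a given prime divisor $E'$ is already defined over some finite level $k_i$, i.e.\ it is the base change of some $g_i\colon Y_i\to X_{k_i}$ with $E'$ among the components of the pullback of a prime divisor $E_i$ on $Y_i$; because $\bar k/k_i$ is separable, canonical divisors and coefficients are unchanged under this base change, so $a_{E'}(X_{\bar k},\Delta_{\bar k})=a_{E_i}(X_{k_i},\Delta_{k_i})$. Likewise $U_{\bar k}=U\times_k\bar k$ is smooth with $\Supp(\Delta_{\bar k}|_{U_{\bar k}})$ simple normal crossing, and centers of divisors over $X_{\bar k}$ project to centers of the corresponding divisors over the relevant $X_{k_i}$. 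Combining these facts with the previous paragraph, every log discrepancy realized over $X_{\bar k}$ equals one realized over $X$, the dlt-witnessing data descends to a finite level, and the log canonical places of $(X_{\bar k},\Delta_{\bar k})$ are divisorial; this yields all four implications. The step I expect to be the main obstacle is precisely the valuation-theoretic bookkeeping under base change: making rigorous that a divisorial valuation over $X_{\bar k}$ restricts to a divisorial valuation over $X$ with the same log discrepancy (i.e.\ that the étale, and in the limit pro-étale, base change introduces neither ramification nor a change of coefficients), together with the compatible descent of models over $X_{\bar k}$ to a finite Galois level; the minor point that $\lfloor\Delta\rfloor$ may disconnect after base change in the plt case is absorbed into the ``lc with divisorial log canonical centres'' reformulation used above.
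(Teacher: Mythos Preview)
Your proof is correct but takes a substantially different route from the paper. The paper's argument is a single paragraph: take a log resolution $f\colon Y\to X$ of $(X,\Delta)$ (using the paper's standing hypothesis that log resolutions exist); since $k$ is perfect, $\bar k/k$ is separable, so $\Spec\bar k\to\Spec k$ is smooth, base change preserves normality and smoothness, and hence $f_{\bar k}\colon Y_{\bar k}\to X_{\bar k}$ is again a log resolution of $(X_{\bar k},\Delta_{\bar k})$; the coefficients of the log pullback are unchanged under base change, and each singularity class can be tested on a single log resolution, so the conclusion is immediate.

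Your approach avoids log resolutions entirely, working instead by \'etale descent through finite Galois levels followed by a colimit. This is more self-contained---it does not rely on the paper's standing resolution hypothesis---and it makes explicit the invariance of log discrepancies under \'etale base change (essentially the content of \cite[2.14--2.15]{Kol13}). The cost is the bookkeeping you yourself flag: descending models and divisorial valuations over $X_{\bar k}$ to a finite level and verifying that no ramification contribution appears. Your handling of the plt case via the reformulation ``lc with only divisorial log canonical places'' is a sensible precaution, since $\lfloor\Delta\rfloor$ may become reducible after base change and the paper's literal definition of plt then does not parse; the paper's proof glosses over this. In short, both arguments are valid; the paper's is much shorter because it cashes in the log-resolution hypothesis, while yours would survive in settings where resolutions are not available.
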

	
	\begin{proof}
		Let $f: Y\to X$ be a log resolution of $(X,\Delta)$. Since $\bar{k}/k$ is a separable extension, $\Spec\bar{k}\to\Spec k$ is a smooth morphism. Therefore $X_{\bar{k}}$ is still normal and $f_{\bar{k}}: Y_{\bar{k}}\to X_{\bar{k}}$ is also a log resolution of $(X_{\bar{k}},\Delta_{\bar{k}})$. Then the statement follows immediately.
	\end{proof}
	
	The following lemma will be used frequently in Section \ref{section: global MMP}, which helps us extend the semiampleness from a point to a neighborhood of it.
	
	\begin{lem}\label{lem: semiample is an open condition}
		Let $f:X\to T$ be a proper morphism of varieties and $\Ff$ a coherent sheaf on $X$. For $t\in T$, let $X_{\mathcal{O}_{T,t}}$ and $\Ff_{\mathcal{O}_{T,t}}$ be the base changes of $X$ and $\Ff$ to $\Spec \mathcal{O}_{T,t}$ respectively. Assume that $\Ff_{\mathcal{O}_{T,t}}$ is generated by global sections. Then $\Ff$ is generated by global sections in a neighborhood of $t\in T$.
	\end{lem}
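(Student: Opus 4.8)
\noindent\emph{Proof idea.} The plan is to encode both the hypothesis and the conclusion as the vanishing of a single coherent sheaf on $X$, and then use properness of $f$ to upgrade a statement over the point $t$ to a statement over an open neighbourhood. Concretely, since $f$ is proper and $T$ is Noetherian, $f_*\Ff$ is coherent on $T$; let $\mu\colon f^*f_*\Ff\to\Ff$ be the evaluation morphism of coherent $\Oo_X$-modules and set $\mathcal{Q}:=\coker(\mu)$, again coherent on $X$. By definition, ``$\Ff$ is generated by global sections over an open $U\subseteq T$'' means that $\mu$ is surjective over $f^{-1}(U)$, i.e.\ that $\mathcal{Q}$ vanishes on $f^{-1}(U)$, so it suffices to produce such a $U$ containing $t$.

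Next I would identify the hypothesis in these terms. Let $g\colon\Spec\mathcal{O}_{T,t}\to T$ be the canonical flat morphism, and let $f'\colon X'\to\Spec\mathcal{O}_{T,t}$ and $g'\colon X'\to X$ be obtained by base change, so that $\Ff_{\mathcal{O}_{T,t}}=g'^*\Ff$. Since $f$ is proper, hence quasi-compact and separated, flat base change gives a canonical isomorphism $g^*f_*\Ff\xrightarrow{\ \sim\ }f'_*g'^*\Ff$, under which $g'^*\mu$ becomes the evaluation morphism $f'^*f'_*\Ff_{\mathcal{O}_{T,t}}\to\Ff_{\mathcal{O}_{T,t}}$. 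As $\Spec\mathcal{O}_{T,t}$ is affine, the sheaf $f'_*\Ff_{\mathcal{O}_{T,t}}$ is generated by its global sections, so this evaluation morphism is surjective precisely when $\Ff_{\mathcal{O}_{T,t}}$ is generated by its global sections. Thus the hypothesis says exactly that $g'^*\mu$ is surjective, equivalently that $g'^*\mathcal{Q}=\coker(g'^*\mu)=0$.

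Finally I would descend this. Because $\mathcal{Q}$ is coherent, $\Supp(g'^*\mathcal{Q})=g'^{-1}(\Supp\mathcal{Q})$, so $g'^{-1}(\Supp\mathcal{Q})=\emptyset$; since the image of $g'$ contains the whole fibre $X_t=f^{-1}(t)$ (indeed it is $f^{-1}$ of the set of generizations of $t$), we get $X_t\cap\Supp\mathcal{Q}=\emptyset$, i.e.\ $t\notin f(\Supp\mathcal{Q})$. Properness of $f$ makes it a closed map, so $f(\Supp\mathcal{Q})$ is closed in $T$, and $U:=T\setminus f(\Supp\mathcal{Q})$ is an open neighbourhood of $t$ over whose preimage $\mathcal{Q}$ vanishes, which is what we want. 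The only delicate point is the flat base-change identification in the middle step; if one prefers to avoid it, one can instead write $\Spec\mathcal{O}_{T,t}=\varprojlim_{t\in U}U$ and invoke the standard ``spreading out'' results, namely that coherence and surjectivity of a map of coherent sheaves both descend to a finite stage $U$, to reach the same conclusion.
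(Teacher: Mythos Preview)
Your proof is correct and follows essentially the same approach as the paper: both consider the cokernel $\mathcal{Q}$ of the evaluation map $f^*f_*\Ff\to\Ff$, argue that $\Supp\mathcal{Q}$ is disjoint from the fibre $f^{-1}(t)$, and then use properness of $f$ to obtain the open neighbourhood. You are in fact more careful than the paper, which simply asserts that the hypothesis forces $\Supp\mathcal{Q}\cap f^{-1}(t)=\emptyset$ without spelling out the flat base change identification that you provide.
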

	\begin{proof}
		Since $f_*\Ff$ is coherent, the canonical map
		$$\phi: f^*f_*\Ff\to \Ff$$
		is a homomorphism of coherent $\Oo_{X}$-modules. By the assumption, the support of $\coker \phi$ is disjoint from the fiber $f^{-1}(t)$, and thus does not intersect $f^{-1}(U)$ for an open neighborhood $U\ni t$ since $f$ is proper. Hence $\phi$ is surjective on $f^{-1}(U)$, which implies the conclusion.
	\end{proof}
	
	\begin{lem}\label{lem: semi-stable in a neighborhood in mixed char}
		Let $(X,\Delta)$ be a dlt pair projective over an excellent Dedekind scheme $V$. Assume that $X_{\Qq}\neq\emptyset$ and that $(X,\Delta)$ admits a log resolution. Then there exists an open subset $U$ of $V$ such that for any closed point $s\in U$, $X_s$ is reduced and $(X,\Delta+X_s)$ is dlt.
	\end{lem}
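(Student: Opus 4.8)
The plan is to work locally on $V$: by flatness and generic properties of morphisms of finite type, the question of whether $X_s$ is reduced and $(X,\Delta+X_s)$ is dlt is an open condition on the closed point $s$, and then to identify explicitly what must be avoided. First I would observe that $\phi:X\to V$ is flat over a dense open subset $V_0\subseteq V$ (flatness is an open condition on the target for a finite-type morphism to a Noetherian scheme, using that $V$ is regular of dimension $1$ and $X$ is integral with dominant structure morphism), and that on $V_0$ the fibers $X_s$ all have the expected dimension. Since $\Qq$-factoriality and $K_X+\Delta$ being $\bR$-Cartier are already assumed, adding the Cartier divisor $X_s$ (for $s\in V_0$, where $X_s$ is a genuine Cartier divisor cut out by a uniformizer of $\Oo_{V,s}$ pulled back to $X$) keeps $K_X+\Delta+X_s$ $\bR$-Cartier.

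Next I would reduce the reducedness and dlt conditions to snc conditions along the generic fiber. Fix a log resolution $g:Y\to X$ of $(X,\Delta)$, which exists by hypothesis; the composition $Y\to X\to V$ is then a morphism of finite type, and over the generic point $\eta$ of $V$ (equivalently over $X_\Qq$, where we are in the characteristic-zero fiber) the pair and the resolution behave generically well. The key point is: the locus in $X$ where $X$ fails to be regular, the locus where $\Supp\Delta$ fails to be snc, and the locus where $g$ fails to be an isomorphism, together with the images in $V$ of the singular locus of the fibers $(X_s)_{\red}$ and of the non-transversality locus of $\Supp\Delta$ with the fibers — all of these, restricted to the part of $X$ lying over $V_0$, are closed subsets of $X$ whose images in $V_0$ are constructible and do not contain $\eta$. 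Since $V$ is one-dimensional, a constructible subset not containing $\eta$ is contained in a finite set of closed points, so after shrinking $V_0$ we may assume none of these bad loci dominates any point of $V_0$. For $s$ in the resulting open set, $X_s$ meets the snc open locus $U_0\subseteq X$ of $(X,\Delta+\emptyset)$ along a dense open, $X_s$ is generically reduced, and being a Cartier divisor it is reduced (it is $S_1$ as a divisor in the Cohen–Macaulay-in-codimension-one sense, or more simply: a Cartier divisor on a normal variety that is generically reduced along each component is reduced).

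Then I would check the dlt condition for $(X,\Delta+X_s)$. On the snc open locus we have arranged that $X_s+\Supp\Delta$ is snc, so it remains to control discrepancies of divisors $E$ over $X$ with center in the complement. Here I would invoke that $(X,\Delta)$ is already dlt, that $X_s$ is Cartier, and a standard computation: if $\Center_X(E)$ is not contained in $X_s$ then $\mult_E(X_s)=0$ and $a_E(X,\Delta+X_s)=a_E(X,\Delta)>0$; if $\Center_X(E)\subseteq X_s$, one uses adjunction and the fact that after shrinking, $X_s$ passes through the dlt locus in a transverse way, so such centers actually lie in the snc open locus and cause no trouble. More carefully, one can argue via the log resolution: $(Y, g^{-1}_*(\Delta)+\Exc(g)+g^*X_s-$ (its strict transform correction)$)$ — but the cleanest route is to note $X_s\sim_V 0$ pulled back, so numerically adding $X_s$ only changes discrepancies along divisors lying over $X_s$, and for those one shrinks $V_0$ further so that $X_s$ together with the components of $\Delta$ and of $\Exc(g)$ form an snc divisor on $Y$ over the relevant open set — possible again because the bad locus is constructible in $V$ and misses $\eta$. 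I expect the main obstacle to be precisely this last bookkeeping: making sure that adding $X_s$ does not create a new non-dlt place over the non-snc locus of $(X,\Delta)$, i.e. that one can choose a single open $U$ working simultaneously for the finitely many exceptional divisors of $g$ and for all the $X_s$; this is handled by the constructibility/one-dimensionality argument but requires care to phrase correctly.
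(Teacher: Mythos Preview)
Your final idea---shrink $V$ so that on the log resolution $g:Y\to X$ the divisor $g_*^{-1}\Delta+\Exc(g)+Y_s$ is snc---is correct and is exactly what the paper does. But the paper reaches it in two lines rather than via your detour through discrepancy case analysis. After throwing away the finitely many closed points of $V$ below vertical components of $D:=g_*^{-1}\Delta+\Exc(g)$, the paper simply observes that every stratum of $(Y,D)$ is smooth over the generic point (characteristic zero), and then invokes openness of smoothness to get an open $U\subseteq V$ over which all strata are smooth. For $s\in U$ this forces $(Y,D+Y_s)$ to be snc, so $g$ is automatically a log resolution of $(X,\Delta+X_s)$; since every exceptional divisor of $g$ is horizontal, its coefficient in the log pullback of $K_X+\Delta+X_s$ equals its coefficient for $K_X+\Delta$, and dlt follows. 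Reducedness of $X_s$ drops out because $Y_s$ is smooth and $g^*X_s=Y_s$.

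Your intermediate attempt at a direct discrepancy argument has an actual error, not just imprecision. You claim that if $\Center_X(E)\subseteq X_s$ then ``such centers actually lie in the snc open locus.'' This is false in general: the non-snc locus of $(X,\Delta)$ is a closed subset of $X$ that may well dominate $V$ (for instance a curve section of the singular locus), in which case it meets \emph{every} fiber $X_s$, so there will be divisors $E$ with center simultaneously in $X_s$ and outside the snc locus. For such $E$ you genuinely need to compute $a_E(X,\Delta+X_s)=a_E(X,\Delta)-\mult_E(X_s)$, and there is no way to bound $\mult_E(X_s)$ without going up to the log resolution---which is why the paper (and eventually you) work on $Y$ from the start. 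The ``bookkeeping'' you worry about at the end is not a real obstacle: once you phrase things as smoothness of strata over the base, openness of smoothness handles all components of $D$ and all fibers $Y_s$ uniformly, with no separate case analysis.
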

	\begin{proof}
		Let $f: Y\to X$ be a log resolution of $(X,\Delta)$ and $D$ the reduced divisor whose support is $\Exc(f)\cup f_*^{-1}\Delta$. We may assume that $D_v=0$, where $D_v$ is the vertical part of $D$. Since the strata of $(Y,D)$ are relatively smooth over the generic point of $V$, by the openness of smoothness (see \cite[Tag 01V7]{Sta}), there exists an open subset $U$ of $V$ such that $(Y_U,D_U)$ are relatively smooth over $U$. Then our claim immediately follows.
	\end{proof}

	\section{Special termination}
	
	In this section we prove the special termination of MMP with scaling, assuming the termination of MMP with scaling in lower dimensions. This is actually sufficient to imply termination in our case.
	
	\medskip
	
	First let us recall results about the termination of flips for three-dimensional MMP with scaling in positive and mixed characteristic (cf. \cite{BW17,HNT20,BMP+20}).
	
	\begin{thm}\label{thm: termination of flips with scaling of A for dlt threefolds}
		Let $T$ be a quasi-projective scheme over a perfect field $k$ of characteristic $p>5$. Let $(S,B+A)$ be a three-dimensional $\mathbb Q$-factorial dlt pair projective over $T$. Assume that $K_S+B+A$ is nef over $T$, $A\ge0$ is big over $T$, and that $\mathbf{B}_+(A/T)$ contains no non-klt centers of $(S,B+A)$. Then we can run a $(K_S+B)$-MMP with scaling of $A$ over $T$, and any sequence of steps of such an MMP terminates.
	\end{thm}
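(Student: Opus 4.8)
The plan is to reduce the statement to the established three‑dimensional MMP in characteristic $p>5$, organising termination as: finitely many divisorial contractions, followed by a sequence of flips which terminates by the known termination results for threefolds. Since $k$ is perfect it is $F$-finite, so $(S,B+A)$ is an $F$-finite $\mathbb Q$-factorial dlt threefold pair; and because $S$ is projective over the quasi-projective scheme $T$, the relative cone and contraction theorems and the existence of flips for such pairs (\cite{HX15,Bir16,BW17}, see also \cite{HNT20,BMP+20}) all hold over $T$. Hence one can run the $(K_S+B)$-MMP with scaling of $A$ over $T$ in the usual way: set $\lambda_0:=1$ (legitimate since $K_S+B+A$ is nef over $T$); at the $i$-th stage one has a $\mathbb Q$-factorial dlt pair $(S_i,B_i)$, a divisor $A_i\ge 0$ big over $T$ with $\mathbf{B}_+(A_i/T)$ containing no non-klt center of $(S_i,B_i)$, and $\lambda_i\in[0,1]$ with $K_{S_i}+B_i+\lambda_iA_i$ nef over $T$; if $K_{S_i}+B_i$ is nef over $T$ we stop with a minimal model, and otherwise we let $\lambda_{i+1}$ be the nef threshold, pick the $(K_{S_i}+B_i)$-negative extremal ray $R_i$ over $T$ with $(K_{S_i}+B_i+\lambda_{i+1}A_i)\cdot R_i=0$ furnished by the cone theorem, and either stop with a Mori fiber space (if the associated contraction is of fiber type) or replace $(S_i,B_i,A_i)$ by the divisorial contraction or by the flip. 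By \cite[Lemma 3.10.11]{BCHM10} (recalled before Lemma \ref{lem: ample perturb}) the bigness of $A_i$ and the fact that $\mathbf{B}_+(A_i/T)$ avoids the non-klt centers are preserved, and $1=\lambda_0\ge\lambda_1\ge\cdots\ge 0$.

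For termination, suppose a sequence of steps is infinite. Each divisorial contraction strictly decreases $\rho(S_i/T)$, so only finitely many occur; after truncating I may assume every step is a flip, so in particular $K_{S_i}+B_i$ is never nef over $T$, every $\lambda_i>0$, and no fiber-type contraction is ever encountered. The truncated sequence is then an infinite $(K_{S_0}+B_0)$-MMP over $T$ with scaling of $A_0$, with $(S_0,B_0)$ $\mathbb Q$-factorial dlt, $A_0\ge 0$ big over $T$, $\mathbf{B}_+(A_0/T)$ containing no non-klt center, and $K_{S_0}+B_0+\lambda_1A_0$ nef over $T$. If $K_{S_0}+B_0$ is pseudo-effective over $T$, this contradicts the termination of the threefold $(K+B)$-MMP with scaling of a big divisor in the pseudo-effective case, \cite{Bir16} (see also \cite{HNT20,BMP+20}); the hypothesis that $\mathbf{B}_+(A_0/T)$ avoids the non-klt centers is exactly what is needed here, because a $(K+B+\lambda A)$-trivial $(K+B)$-negative step need not preserve dlt-ness of $(S_i,B_i+A_i)$ and one must instead work with $\mathbb R$-linearly equivalent perturbations (cf. Lemma \ref{lem: ample perturb}). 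If $K_{S_0}+B_0$ is not pseudo-effective over $T$, then, pseudo-effectivity being preserved along flips, $K_{S_i}+B_i$ is non-pseudo-effective for every $i$, so the MMP never reaches a minimal model; but by \cite{BW17} such an MMP with scaling terminates with a Mori fiber space, again a contradiction. Hence any sequence of steps is finite.

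The main obstacle is not a new argument but the correct importation of the threefold inputs: existence of flips, termination of the MMP with scaling in the pseudo-effective case, and existence and reachability of Mori fiber spaces in the non-pseudo-effective case, all in characteristic $p>5$ and in the \emph{relative} setting over the quasi-projective base $T$, over a merely perfect (not necessarily algebraically closed) field. The cone and contraction theorems and existence of flips over $T$ follow from projectivity of $S/T$ together with $F$-finiteness of $k$. For the termination statements the cleanest route is to quote the relative/$F$-finite forms of \cite{Bir16,BW17} (and \cite{HNT20,BMP+20}); alternatively one passes to the absolute setting, noting that a sequence of flips over $T$ is a fortiori a sequence of flips over $k$, and, if needed, base changes to $\bar k$ via Lemma \ref{lem: base change from perfect field to algebraically closed field} after replacing $k$ by the algebraic closure of $k$ in $k(S)$ to ensure geometric integrality.
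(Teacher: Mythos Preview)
Your overall strategy---reduce to the known threefold termination results, splitting on whether $K_{S_0}+B_0$ is pseudo-effective---matches the paper. The gap is in the non-pseudo-effective branch: you write ``by \cite{BW17} such an MMP with scaling terminates with a Mori fiber space'', but \cite[Theorem 1.5]{BW17} is stated for \emph{klt} pairs with scaling of an \emph{ample} divisor, not for a dlt pair $(S_0,B_0)$ with scaling of a merely big $A_0$. You correctly flag (in the pseudo-effective paragraph) that the $\mathbf{B}_+$ hypothesis is what allows perturbation, but you never carry out the reduction where it is actually needed.

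The missing step, which is the main content of the paper's proof in this case, is: choose $0<\epsilon\ll 1$ with $K_S+B+\epsilon A$ still not pseudo-effective; use $\mathbf{B}_+(A/T)$ avoiding non-klt centers to write $A\sim_{\mathbb R,T}H+E$ with $H$ ample and $E\ge 0$ containing no non-klt centers, so that one can find $\Delta_\epsilon\sim_{\mathbb R,T}B+\epsilon H+\epsilon E$ with $(S,\Delta_\epsilon+(1-\epsilon)A)$ klt; then observe that since every scaling number satisfies $\lambda>\epsilon$, each $(K_S+B+\lambda A)$-trivial, $(K_S+B)$-negative ray is also $(K_S+\Delta_\epsilon+(\lambda-\epsilon)A)$-trivial and $(K_S+\Delta_\epsilon)$-negative. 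Hence the given MMP is simultaneously a $(K_S+\Delta_\epsilon)$-MMP with scaling of $(1-\epsilon)A$, and \cite{BW17} now applies to that klt pair. Without this perturbation your citation of \cite{BW17} does not bite, and the $\mathbf{B}_+$ hypothesis is never used in the case where it is essential.
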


	\begin{proof}
		We can run a $(K_S+B)$-MMP with scaling by \cite[Theorem 1.3, Theorem 1.4 and Theorem 4.11]{HNT20} (see also \cite[Theorem 6.2]{HNT20} and Lemma \ref{lem: base change from perfect field to algebraically closed field}). So it remains to show the termination.
		
		When $K_S+B$ is pseudo-effective over $T$, the statement follows directly from \cite[Theorem 6.11]{HNT20} and Lemma \ref{lem: base change from perfect field to algebraically closed field}.
		
		Suppose that $K_S+B$ is not pseudo-effective over $T$. As in the proof of \cite[Corollary 6.10]{HNT20}, we can reduce to the case where $k$ is algebraically closed. Since $\mathbf{B}_+(A/T)$ contains no non-klt centers of $(S,B+A)$, we may write $A\sim_{\mathbb R,T} H+E$ where $H$ is ample and $E$ is effective such that $E$ contains no non-klt centers of $(S,B+A)$. Let $0<\epsilon \ll 1$ such that $K_S+B+\epsilon A$ is not pseudo-effective. Then we can find a boundary $\Delta_{\epsilon}\sim_{\mathbb R,T}B+\epsilon H+\epsilon E$ such that $(S,\Delta_{\epsilon}+(1-\epsilon)A)$ is klt. By \cite[Theorem 1.5]{BW17}, any $(K_S+\Delta_\epsilon)$-MMP with scaling of $(1-\epsilon)A$ over $T$ terminates. For any $(K_S+B)$-MMP with scaling of $A$, since $K_S+B+\epsilon A$ is not pseudo-effective, the scaling number $\lambda$ is always greater than $\epsilon$. Therefore any $(K_S+B+\lambda A)$-trivial and $(K_S+B)$-negative curve is also a $(K_S+\Delta_\epsilon+(\lambda-\epsilon) A)$-trivial and $(K_S+\Delta_\epsilon)$-negative curve, which implies that this MMP is also a $(K_S+\Delta_\epsilon)$-MMP with scaling of $(1-\epsilon)A$. Hence it terminates and the statement follows.
	\end{proof}

	\begin{thm}\label{thm: MMP for dlt threefolds in mixed char}
		Let $R$ be a finite-dimensional excellent ring admitting a dualizing complex, and $T$ a quasi-projective scheme over $R$ whose residue fields do not have characteristic 2, 3 or 5 (cf. \cite[Setting 9.1]{BMP+20}). Let $(S,B+A)$ be a three-dimensional $\mathbb Q$-factorial dlt pair projective over $T$ such that the image of $X$ in $T$ has positive dimension. Assume that $K_S+B+A$ is nef over $T$, $A\ge0$ is big over $T$, and that $\mathbf{B}_+(A/T)$ contains no non-klt centers of $(S,B+A)$. Then we can run a $(K_S+B)$-MMP with scaling of $A$ over $T$, and any sequence of steps of such an MMP terminates.
	\end{thm}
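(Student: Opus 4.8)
The strategy is to transcribe the proof of Theorem~\ref{thm: termination of flips with scaling of A for dlt threefolds}, replacing the positive-characteristic inputs of \cite{BW17} and \cite{HNT20} by their mixed-characteristic analogues from \cite{BMP+20}. The hypotheses are arranged precisely so that $(S,B+A)\to T$ falls into the setting of \cite[Section~9]{BMP+20}; in particular the assumption that the image of $S$ in $T$ has positive dimension is exactly what places us in \cite[Setting~9.1]{BMP+20} (so that the fibres of $S$ over closed points of its image in $T$ have dimension at most two), rather than in a lower-dimensional or purely field-theoretic case.

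First I would run the MMP. Since $(S,B+A)$ is a three-dimensional $\bQ$-factorial dlt pair projective over $T$ with $S$ dominating a positive-dimensional subscheme of $T$, the cone and contraction theorems, the existence of flips, and the existence of the MMP with scaling of an ample divisor are available from \cite[Section~9]{BMP+20}; perturbing by Lemma~\ref{lem: ample perturb} if needed, one obtains a $(K_S+B)$-MMP with scaling of $A$ over $T$. It then remains to prove termination of an arbitrary such sequence. When $K_S+B$ is pseudo-effective over $T$ this is part of the threefold MMP package of \cite{BMP+20} (termination of the MMP with scaling for pseudo-effective dlt threefolds in this setting), so I may assume $K_S+B$ is not pseudo-effective over $T$.

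In that case, since $\mathbf{B}_+(A/T)$ contains no non-klt center of $(S,B+A)$, write $A\sim_{\bR,T}H+E$ with $H$ ample over $T$ and $0\le E$ containing no non-klt center of $(S,B+A)$. Pick $0<\epsilon\ll1$ so that $K_S+B+\epsilon A$ is still not pseudo-effective over $T$, and choose a boundary $\Delta_\epsilon\sim_{\bR,T}B+\epsilon H+\epsilon E$ with $(S,\Delta_\epsilon+(1-\epsilon)A)$ klt, exactly as in the proof of Theorem~\ref{thm: termination of flips with scaling of A for dlt threefolds}. By the termination of the MMP with scaling for klt threefolds in mixed characteristic (\cite{BMP+20}, applied to $(S,\Delta_\epsilon)$ with scaling of $(1-\epsilon)A$ over $T$), \emph{every} $(K_S+\Delta_\epsilon)$-MMP with scaling of $(1-\epsilon)A$ over $T$ terminates. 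Now take an arbitrary $(K_S+B)$-MMP with scaling of $A$ over $T$ with scaling numbers $\lambda_i$: since $K_S+B+\epsilon A$ is not pseudo-effective over $T$, the divisor $K_{S_i}+B_i+\epsilon A_i$ is never nef over $T$, hence $\lambda_i>\epsilon$ for all $i$; therefore each $(K_{S_i}+B_i+\lambda_iA_i)$-trivial, $(K_{S_i}+B_i)$-negative extremal ray is also $(K_{S_i}+\Delta_{\epsilon,i}+(\lambda_i-\epsilon)A_i)$-trivial and $(K_{S_i}+\Delta_{\epsilon,i})$-negative, so this MMP is simultaneously a $(K_S+\Delta_\epsilon)$-MMP with scaling of $(1-\epsilon)A$ over $T$, and thus terminates.

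The main obstacle lies not in the argument above but in confirming the black boxes: that the threefold MMP of \cite{BMP+20}, and especially its termination statements — termination of the MMP with scaling in the pseudo-effective case and termination of \emph{every} klt MMP with scaling in the non-pseudo-effective case — hold relatively over $T$ in the exact form used here, under the hypotheses of \cite[Setting~9.1]{BMP+20}. Granting these, the rest is a routine adaptation of the characteristic-$p$ argument, with the advantage that no reduction to an algebraically closed base field is required, the mixed-characteristic results applying directly.
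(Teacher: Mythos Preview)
Your proposal is correct and follows essentially the same route as the paper's proof. The paper supplies the precise references you identify as the ``main obstacle'': the pseudo-effective case is \cite[Proposition~9.18]{BMP+20}; running the $(K_S+\Delta_\epsilon)$-MMP with scaling uses \cite[Theorems~9.12, 9.27, 9.32]{BMP+20}; and termination of every such klt MMP in the non-pseudo-effective case is obtained by combining the argument of \cite[Proposition~4.3]{BW17} with the existence of Mori fibre spaces from \cite[Theorem~9.33]{BMP+20}.
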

	\begin{proof}
		When $K_S+B$ is pseudo-effective over $T$, the statement follows directly from \cite[Proposition 9.18]{BMP+20}.\par
		Suppose that $K_S+B$ is not pseudo-effective over $T$. As in the proof of Theorem \ref{thm: termination of flips with scaling of A for dlt threefolds}, we can find some $0<\epsilon \ll 1$ such that $K_S+B+\epsilon A$ is not pseudo-effective and a boundary $\Delta_{\epsilon}\sim_{\mathbb R,T}B+\epsilon H+\epsilon E$ such that $(S,\Delta_{\epsilon}+(1-\epsilon)A)$ is klt. Then we can run a $(K_S+\Delta_\epsilon)$-MMP with scaling of $(1-\epsilon)A$ over $T$ by \cite[Theorem 9.27, Theorem 9.32 and Theorem 9.12]{BMP+20}. Any sequence of steps of such an MMP terminates by the proof of \cite[Proposition 4.3]{BW17} and \cite[Theorem 9.33]{BMP+20}. Since any $(K_S+B)$-MMP with scaling of $A$ is also a $(K_S+\Delta_\epsilon)$-MMP with scaling of $(1-\epsilon)A$, the statement follows.
	\end{proof}

	\begin{rem}
		In view of \cite{HW19} and \cite{XX22}, Theorem \ref{thm: termination of flips with scaling of A for dlt threefolds} and \ref{thm: MMP for dlt threefolds in mixed char} are also expected to hold for $p=5$. However, many of the references that we use in our proofs require $p>5$.
	\end{rem}
	
	\medskip
	
	Then we can prove the special termination of MMP with scaling for fourfolds in positive and mixed characteristic. We closely follow the approach in \cite{Fuj07}, and the major difference is that we consider an MMP with scaling rather than a general MMP.
	
	\begin{thm}\label{thm: special termination char p}
		Let $X$ be a normal $\bQ$-factorial fourfold over a perfect field $k$ with char $p>5$, and $B$ an effective $\bR$-divisor such that $(X, B)$ is dlt. Let
		$$
		X_0 \dashrightarrow X_1 \dashrightarrow X_2 \dashrightarrow \cdots \dashrightarrow X_i \dashrightarrow \cdots,
		$$
		be a sequence of $(K_X+B)$-flips with scaling of an ample divisor $A$. Then after finitely many flips the flipping locus and the flipped locus are disjoint from $\lfloor B_i \rfloor$, where $B_i$ is the strict transform of $B$ on $X_i$.
	\end{thm}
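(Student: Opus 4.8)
The plan is to adapt Fujino's special‑termination technique \cite{Fuj07} to an MMP with scaling, feeding in the three‑dimensional termination statement Theorem \ref{thm: termination of flips with scaling of A for dlt threefolds} (and the classical termination for surfaces and curves, \cite{Tan18}) wherever \cite{Fuj07} invokes the MMP in lower dimension. The only genuinely new feature is that we must carry the scaling along.

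\textbf{Reductions and adjunction.} Since flips are isomorphisms in codimension one, the number of irreducible components of $\lfloor B_i\rfloor$ is independent of $i$; write $\lfloor B_i\rfloor=\sum_j S_i^{\,j}$ with $S_{i+1}^{\,j}$ the strict transform of $S_i^{\,j}$. It suffices to show that for each $j$ and all $i\gg 0$ the flipping locus misses $S_i^{\,j}$ and the flipped locus misses $S_{i+1}^{\,j}$. Because $(X,B)$ is dlt with $p>5$ and $\dim X=4$, every stratum $V_i$ of $(X_i,\lfloor B_i\rfloor)$ is normal and, by iterated adjunction, carries a dlt pair $(V_i,\Theta_i)$ with $K_{V_i}+\Theta_i=(K_{X_i}+B_i)|_{V_i}$; here I use Lemma \ref{lem: dlt modification for fourfolds with general coef} and \cite{HW20} for the normality statements in characteristic $p$, and Lemma \ref{lem: base change from perfect field to algebraically closed field} to pass to $\bar k$ when convenient. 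As each flip is $(K_{X_i}+B_i)$-negative, restriction gives, on a common resolution, $p^{*}(K_{V_i}+\Theta_i)\ge q^{*}(K_{V_{i+1}}+\Theta_{i+1})$; hence along $V_i\dashrightarrow V_{i+1}$ the log discrepancies $a_F(V_i,\Theta_i)$ are nondecreasing, and strictly increasing for those $F$ whose center meets the flipping locus. If $K_{X_i}+B_i+\lambda_iA_i$ is the nef divisor realising the $i$‑th step of the MMP with scaling, then $K_{V_i}+\Theta_i+\lambda_i(A_i|_{V_i})$ is nef and the restricted contraction (when nontrivial) is $\lambda_i$‑trivial; since $V\subseteq\lfloor B\rfloor$ is a non‑klt center and $\mathbf{B}_+(A)$ contains no non‑klt center of $(X,B)$, the divisor $A_i|_{V_i}\ge 0$ is big and $\mathbf{B}_+(A_i|_{V_i})$ contains no non‑klt center of $(V_i,\Theta_i+A_i|_{V_i})$, these properties being preserved along the MMP by \cite[Lemma 3.10.11]{BCHM10}.

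\textbf{Stratification induction.} By induction on $0\le d\le 3$ I prove: there is $N_d$ such that for all $i\ge N_d$ the flipping and flipped loci are disjoint from every stratum of $(X_i,\lfloor B_i\rfloor)$ (resp. $(X_{i+1},\lfloor B_{i+1}\rfloor)$) of dimension $\le d$; the case $d=3$ is exactly the theorem. The base case $d=0$ is the intersection‑number argument of \cite{Fuj07}: a $(K_{X_i}+B_i)$-negative flipping curve through a $0$‑stratum would, after the flip, meet the strict transforms of the components of $\lfloor B_i\rfloor$ through that point negatively, contradicting their effectivity. For the step $d-1\to d$, every $\le d$‑stratum lies in some component $S=S^{\,j}$; I work, for $i\ge N_{d-1}$, with the restricted data above on the relevant strata $V_i\subseteq S_i$ (all of dimension $\le d\le 3$), where the inductive hypothesis already provides disjointness from the $\le(d-1)$‑strata. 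Combining this with the monotonicity of log discrepancies and the boundedness of the Picard numbers $\rho(V_i)$, one shows that after finitely many further steps no component of the flipping or flipped locus lies in $V_i$, so that the restricted sequence $V_i\dashrightarrow V_{i+1}$ is an honest $(K_{V_i}+\Theta_i)$-MMP with scaling of (a fixed positive multiple of) $A_i|_{V_i}$, which terminates by Theorem \ref{thm: termination of flips with scaling of A for dlt threefolds} in the threefold case and classically for surfaces and curves. A final intersection‑number argument on the components through $V_i$ upgrades this to disjointness of the loci from all of $V_i$ for $i\gg 0$; since there are finitely many components $S$ and, for each, finitely many $d$‑strata, a common $N_d$ exists.

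\textbf{Main obstacle.} The delicate points all sit in the inductive step. First, one must verify that for $i\gg 0$ the restricted maps $V_i\dashrightarrow V_{i+1}$ extract no divisor — equivalently, that the flipped locus never contains a whole stratum — which is precisely where the monotone log‑discrepancy bookkeeping must be played against the inductive disjointness from lower strata. Second, one must recognise the restricted sequence as an MMP \emph{with scaling}, so that Theorem \ref{thm: termination of flips with scaling of A for dlt threefolds} (rather than some general, and unavailable, termination statement) applies; this needs the scaling numbers $\lambda_i$ together with the bigness and $\mathbf{B}_+$ conditions to restrict correctly, as arranged above. A further, characteristic‑specific, point is the normality of the strata and the validity of iterated adjunction in residue characteristic $p>5$, imported from Lemma \ref{lem: dlt modification for fourfolds with general coef} and \cite{HW20}.
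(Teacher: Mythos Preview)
Your overall architecture---induction on the dimension of strata, monotonicity of log discrepancies under flips, and feeding in Theorem~\ref{thm: termination of flips with scaling of A for dlt threefolds} on the three-dimensional strata---matches the paper's adaptation of \cite{Fuj07}. However, there is a genuine gap at the heart of the inductive step.

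You assert that, once no component of the flipping or flipped locus is contained in $V_i$, the restricted sequence $V_i\dashrightarrow V_{i+1}$ is ``an honest $(K_{V_i}+\Theta_i)$-MMP with scaling''. This is not true as stated. Even when every $V_i\to T_i\leftarrow V_{i+1}$ is of type (SS), the normalised stratum $V_i$ need not be $\bQ$-factorial, and the induced contraction $V_i\to T_i$ need not have relative Picard number one; so these restricted maps are not MMP steps in any sense to which Theorem~\ref{thm: termination of flips with scaling of A for dlt threefolds} applies directly. The paper resolves this by an auxiliary construction you have omitted: one takes a small $\bQ$-factorial dlt modification $f:S_0^0\to S_0$, sets $K_{S_0^0}+B_{S_0^0}=f^*(K_{S_0}+B_{S_0})$ and $A_{S_0^0}=f^*A_{S_0}$, and then runs a $(K_{S_0^0}+B_{S_0^0})$-MMP with scaling of $\lambda_0 A_{S_0^0}$ \emph{over $T_0$}. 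Because $K_{X_0}+B_0+\lambda_0 A_0$ is trivial over $Z_0$, the pulled-back divisor $K_{S_0^0}+B_{S_0^0}+\lambda_0 A_{S_0^0}$ is trivial over $T_0$, so this relative MMP is $(K+B+\lambda_0 A)$-trivial and hence the scaling number stays at $\lambda_0$ throughout; the output $S_0^{k_0}$ maps to $S_1$ (since $K_{S_1}+B_{S_1}$ is ample over $T_0$), and one iterates. The concatenated sequence $S_0^0\dashrightarrow S_0^{k_0}=S_1^0\dashrightarrow\cdots$ is then a genuine MMP with scaling on a $\bQ$-factorial dlt threefold, to which Theorem~\ref{thm: termination of flips with scaling of A for dlt threefolds} applies; termination of this auxiliary MMP forces $S_i=S_{i+1}$ for $i\gg0$. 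Without this shadowing construction, you have no object on which the three-dimensional termination theorem can be invoked.

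A secondary point: you claim the strata are normal, citing Lemma~\ref{lem: dlt modification for fourfolds with general coef}. That lemma does not give normality of strata; the paper instead works on the normalisation $S\to\bar S$, using that $\bar S$ is normal up to universal homeomorphism \cite[Lemma~2.28]{BMP+20}. You also skip the difficulty-function argument (the $d_I$ of \cite{Fuj07}, via Lemma~\ref{lem: finiteness of coefficients}) needed to rule out types (SD) and (DD) before the Picard-number bound handles (DS); ``monotonicity of log discrepancies'' alone does not obviously do this.
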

	
	In the following we will always use $\bar{S}$ to denote a non-klt center of the pair $(X, B)$, i.e. a stratum of $\lfloor B \rfloor$, and use $\nu: S \to \bar{S}$ to denote the normalization. Note that $\nu$ is homeomorphic since $\bar{S}$ is normal up to universal homeomorphism (\cite[Lemma 2.28]{BMP+20}).
	Now we can apply adjunction for dlt pairs
	to $S$ and get a boundary $B_S$ on $S$ such that the pair $(S, B_S)$ is also dlt, where $(K_X+B)|_S = K_S+B_S$.
	
	\begin{defn}
		Let $I \subseteq [0,1]$ be a coefficient set. Then we define
		$$
		D(I) := \{1 - \frac{1}{m} + \sum_j \frac{r_j b_j}{m} \mid m\in \bZ_{>0}, r_j \in \bZ_{\ge 0}, b_j \in I\} \cap [0,1].
		$$
	\end{defn}
	
	\begin{lem}[{\cite[7.4.4 Lemma]{K+92}}]\label{lem: finiteness of coefficients}
		For any fixed numbers $0 < b_j \le 1$ and $c>0$, there are only finitely many possible values for $m \in \bZ_{>0}$ and $r_j \in \bZ_{\ge 0}$ such that
		$$
		1 - \frac{1}{m} + \sum_j \frac{r_j b_j}{m} \le 1-c.
		$$
	\end{lem}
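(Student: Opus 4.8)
The plan is elementary: clear the denominator $m$ and then bound $m$ and each $r_j$ in turn.

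\textbf{Step 1 (bounding $m$).} First I would multiply the displayed inequality through by $m>0$ to obtain $m-1+\sum_j r_j b_j \le m(1-c)$, equivalently $\sum_j r_j b_j \le 1-cm$. Since every term $r_j b_j$ is nonnegative, the left-hand side is $\ge 0$, so $cm\le 1$, i.e. $m\le 1/c$. As $m$ is a positive integer, this already leaves only finitely many possibilities for $m$.

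\textbf{Step 2 (bounding each $r_j$).} Next I would fix one of these finitely many values of $m$. From $\sum_j r_j b_j\le 1-cm\le 1$ together with the nonnegativity of all the summands, each individual term satisfies $r_j b_j\le 1$, hence $r_j\le 1/b_j$ using $b_j>0$. Since the index set $\{j\}$ is finite and each $r_j\in\bZ_{\ge 0}$ is bounded by $1/b_j$, there are only finitely many admissible tuples $(r_j)_j$ for each fixed $m$. Combining with the finiteness of the choices of $m$ gives the claim.

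I do not expect any real obstacle here; the argument is a two-line estimate. The only point worth keeping in mind is that the hypothesis supplies \emph{finitely many} fixed coefficients $b_j$, which is exactly what makes the per-coordinate bound in Step 2 translate into finitely many tuples — if infinitely many $b_j$ accumulating at $0$ were allowed, the conclusion would fail, so this hypothesis must be used (implicitly) at that step.
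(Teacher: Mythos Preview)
Your argument is correct and is the standard elementary proof. The paper does not supply its own proof of this lemma; it simply cites \cite[7.4.4 Lemma]{K+92}, so there is nothing to compare against beyond noting that your two-step bound (first $m\le 1/c$, then $r_j\le 1/b_j$) is exactly the intended proof.
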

	
	\begin{defn}
		Let $\bar{S}$ be a non-klt center of the dlt pair $(X, B)$ and $S\to\bar{S}$ the normalization. We define the difficulty
		$$
		d_I(S, B_S) := \sum_{\alpha \in D(I)} \# \{E \mid a(E, B_S) < -\alpha\ \text{and}\ c_S(E) \not\subseteq \lfloor B_S \rfloor\}.
		$$
	\end{defn}
	
	We know that $d_I(S, B_S) < \infty$ by Lemma \ref{lem: finiteness of coefficients}.
	
	
	\begin{defn}
		Let $f: X \to Y$ be a birational contraction with $\dim X \ge 2$. We say that $f$ is of type (D) if $f$ contracts at least one divisor, and that $f$ is of type (S) if $f$ is an isomorphism in codimension one.
		
		Let $X\stackrel{f}{\longrightarrow} Y \stackrel{g}{\longleftarrow} Z$ be a pair of birational contractions. We say this is of type (DS) if $f$ is of type (D) and $g$ is of type (S). Type (DD), (SD), (SS) are defined similarly.
	\end{defn}
	
	\medskip
	
	\begin{proof}[Proof of Theorem \ref{thm: special termination char p}]
		First we fix some notation. Suppose that $\psi_i: X_i \to Z_i$ is the small contraction, and that $X_{i+1} \to Z_i$ is the corresponding flip. For some fixed non-klt center $\bar{S}_i$, let $S_i$ be the normalization of $\bar{S}_i$ and $T_i$ the normalization of $\psi_i(\bar{S}_i)$
		
		\bigskip
		
		We divide the proof into 3 steps.
		
		\bigskip
		
		\textbf{Step 1.} We claim that after finitely many flips, the flipping locus does not contain any non-klt centers. First note that the number of non-klt centers is finite. If there is a non-klt center contained in the flipping locus, then by the negativity lemma (\cite[Lemma 3.38]{KM98}) the number of non-klt centers will decrease, which can only happen for finitely many times.
		
		\medskip
		
		Therefore, we may assume that the flipping locus contains no non-klt centers of the pair $(X_i, B_i)$ for every $i$. Then $\varphi_i: X_i \dashrightarrow X_{i+1}$ induces a birational map $\varphi_i|_{S_i}: S_i \dashrightarrow S_{i+1}$, where $\bar{S}_i$ is a non-klt centers of $(X_i, B_i)$ and $\bar{S}_{i+1}$ is the corresponding non-klt center of $(X_{i+1}, B_{i+1})$. For simplicity we will omit $|_{S_i}$ if there is no danger of confusion.
		
		\smallskip
		
		The next lemma will be used repeatedly in the next step.
		
		\begin{lem}\label{lem: a increase}
			Under the above assumptions, we have
			$$
			a(E, S_i, B_{S_i}) \le a(E, S_{i+1}, B_{S_{i+1}})
			$$
			for every valuation $E$. In particular,
			$$
			\mathrm{totaldiscrep}(S_i, B_{S_i}) \le \mathrm{totaldiscrep}(S_{i+1}, B_{S_{i+1}}).
			$$
		\end{lem}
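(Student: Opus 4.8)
The plan is to reduce the inequality $a(E, S_i, B_{S_i}) \le a(E, S_{i+1}, B_{S_{i+1}})$ to an application of the negativity lemma, exactly as in the classical special-termination argument of Fujino and of Kollár--Mori. The starting observation is that, since the flipping locus of $\varphi_i : X_i \dashrightarrow X_{i+1}$ contains no non-klt center of $(X_i,B_i)$ (established in Step 1), the induced rational map $\varphi_i|_{S_i} : S_i \dashrightarrow S_{i+1}$ is a birational contraction between the normalizations of the corresponding strata, and by adjunction for dlt pairs we have $(K_{X_i}+B_i)|_{S_i} = K_{S_i}+B_{S_i}$ and $(K_{X_{i+1}}+B_{i+1})|_{S_{i+1}} = K_{S_{i+1}}+B_{S_{i+1}}$.

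The key step is to compare $K_{S_i}+B_{S_i}$ and $K_{S_{i+1}}+B_{S_{i+1}}$ as divisors pulled back to a common resolution, using the fact that $\varphi_i$ is a $(K_{X_i}+B_i)$-flip. First I would take a normal variety $W$ with proper birational morphisms $p : W \to S_i$ and $q : W \to S_{i+1}$ resolving the map $\varphi_i|_{S_i}$. Restricting the flipping relation to strata: on $X_i$ the divisor $K_{X_i}+B_i$ is $\psi_i$-anti-ample (where $\psi_i : X_i \to Z_i$ is the small contraction), while $K_{X_{i+1}}+B_{i+1}$ is $\psi_i'$-ample on $X_{i+1}$. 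Restricting to $S_i$ and $S_{i+1}$ and pushing to the normalization $T_i$ of $\psi_i(\bar S_i)$, the divisor $-(K_{S_i}+B_{S_i})$ is nef over $T_i$ and $K_{S_{i+1}}+B_{S_{i+1}}$ is nef over $T_i$, and both push forward to the same $\bR$-Cartier divisor class on $T_i$ (since they agree away from the flipping locus, which maps into a locus of smaller dimension). Therefore $p^*(K_{S_i}+B_{S_i}) - q^*(K_{S_{i+1}}+B_{S_{i+1}})$ is a $\pi$-nef divisor, where $\pi : W \to T_i$ is the common map, whose pushforward to $T_i$ is zero; by the negativity lemma (\cite[Lemma 3.38]{KM98}) it is effective, i.e.
$$
p^*(K_{S_i}+B_{S_i}) \ge q^*(K_{S_{i+1}}+B_{S_{i+1}}).
$$
Computing the coefficient of an arbitrary prime divisor $E$ over $S_i \cong_{\mathrm{bir}} S_{i+1}$ on both sides of this inequality, and recalling that $a(E, S, B_S)$ is by definition $1$ minus (or the negative of, depending on convention) that coefficient, yields $a(E, S_i, B_{S_i}) \le a(E, S_{i+1}, B_{S_{i+1}})$ for every valuation $E$. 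Taking the infimum over all $E$ gives the statement about total discrepancies.

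The main obstacle I expect is verifying carefully that adjunction is available in this mixed/positive-characteristic, non-$\bQ$-factorial-on-$S$ setting: one needs that $S_i$ (the normalization of the stratum $\bar S_i$) carries a well-defined boundary $B_{S_i}$ with $(S_i, B_{S_i})$ dlt and $(K_{X_i}+B_i)|_{S_i} = K_{S_i}+B_{S_i}$, which the excerpt has set up via \cite[Lemma 2.28]{BMP+20} and ``adjunction for dlt pairs,'' but one should also check that the restriction of the flipping contraction to $\bar S_i$ still factors through $T_i$ compatibly and that the divisor-class equality on $T_i$ really holds (this uses that the exceptional loci have codimension $\ge 2$ in $\bar S_i$ and $\psi_i(\bar S_i)$, which follows from Step 1 plus the small-contraction hypothesis). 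Once the adjunction bookkeeping is in place, the negativity-lemma argument is entirely formal and parallels \cite{Fuj07} verbatim.
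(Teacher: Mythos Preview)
Your approach is correct and uses the same key input as the paper --- the negativity lemma --- but the paper's implementation is slightly slicker: it takes a common resolution $p:W\to X_i$, $q:W\to X_{i+1}$ of the \emph{ambient} fourfolds, applies the negativity lemma there (where the hypotheses are immediate from the definition of a flip) to get $p^*(K_{X_i}+B_i)\ge q^*(K_{X_{i+1}}+B_{i+1})$, and then simply restricts this inequality to the strict transform $R\subset W$ of $S_i$. This bypasses the bookkeeping you flag as the ``main obstacle'': you do not need to check separately that $S_i\to T_i$ is birational, that the pushforwards to $T_i$ agree, or that adjunction behaves well over $T_i$. Also note a small sign slip in your write-up: the divisor that is $\pi$-nef is $-(p^*(K_{S_i}+B_{S_i})-q^*(K_{S_{i+1}}+B_{S_{i+1}}))$, not the difference itself; this is what the negativity lemma requires to conclude effectivity of the difference.
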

		
		\begin{proof}
			Let $p: W \to X_i$ and $q: W \to X_{i+1}$ be a common resolution of $\varphi_i: X_i \dashrightarrow X_{i+1}$, and let $R$ be the strict transform of $S_i$. By the negativity lemma,
			$$
			p^*(K_{X_i}+B_i) \ge q^*(K_{X_{i+1}}+B_{i+1}).
			$$
			Restricting to $R$, we have
			$$
			p^*(K_{S_i}+B_{S_i}) \ge q^*(K_{S_{i+1}}+B_{S_{i+1}}),
			$$
			which implies the lemma.
		\end{proof}
		
		\medskip
		
		\textbf{Step 2.} In this step, we show that after finitely many flips, $\varphi_i$ induces an isomorphism of log pairs $(S_i, B_{S_i}) \simeq (S_{i+1}, B_{S_{i+1}})$ for every non-klt center $\bar{S_i}$. Here by an isomorphism of log pairs we mean that $\varphi_i: S_i \to S_{i+1}$ is an isomorphism with $\varphi_{i *}(B_{S_i})=B_{S_{i+1}}$.
		
		
		We prove by induction on the dimension $d$ of the non-klt center $\bar{S_i}$. When $d=0$, it is obvious. When $d=1$, the claimed isomorphism of log pairs follows from Lemma \ref{lem: a increase} and the fact that $d_I(S_i, B_{S_i}) < \infty$, since if $\varphi_i$ is not an isomorphism then $d_I(S_i, B_{S_i})$ will decrease, which can only happen for finitely many times.
		
		\medskip
		
		First we reduce to the case where, after finitely many flips, all $S_i \to T_i \leftarrow S_{i+1}$ are of type (SS). We achieve this by the following lemma.
		
		\begin{lem}[{\cite[Proposition 4.2.14]{Fuj07}}]
			Under the above assumptions, we have the inequality $d_I(S_i, B_{S_i}) \ge d_I(S_{i+1}, B_{S_{i+1}})$.
			
			Moreover, if $S_i \to T_i \leftarrow S_{i+1}$ is of type (SD) or (DD), then $d_I(S_i, B_{S_{i}}) > d_I(S_{i+1}, B_{S_{i+1}})$.
			
			Therefore, after finitely many flips, all $S_i \to T_i \leftarrow S_{i+1}$ are of type (SS) or (DS).
		\end{lem}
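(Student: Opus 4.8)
The plan is to compare the two finite non-negative integers $d_I(S_i, B_{S_i})$ and $d_I(S_{i+1}, B_{S_{i+1}})$ term by term in the defining sum over $\alpha \in D(I)$, using the birational map $\varphi_i|_{S_i}\colon S_i \dashrightarrow S_{i+1}$ together with its factorization $S_i \xrightarrow{f} T_i \xleftarrow{g} S_{i+1}$. For $\alpha \in D(I)$ I write $N_j(\alpha) := \{E \mid a(E, S_j, B_{S_j}) < -\alpha \text{ and } c_{S_j}(E) \not\subseteq \lfloor B_{S_j} \rfloor\}$, so that $d_I(S_j, B_{S_j}) = \sum_{\alpha} \# N_j(\alpha)$.

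First I would prove the weak inequality by showing $N_{i+1}(\alpha) \subseteq N_i(\alpha)$ for every $\alpha$. If $E \in N_{i+1}(\alpha)$, then Lemma~\ref{lem: a increase} gives $a(E, S_i, B_{S_i}) \le a(E, S_{i+1}, B_{S_{i+1}}) < -\alpha$, so the discrepancy condition is inherited by $S_i$. For the condition on centers I would use that the components of $\lfloor B_{S_i} \rfloor$ correspond to non-klt centers of $(X_i, B_i)$ of dimension strictly less than $\dim \bar{S}_i$; by the inductive hypothesis on $\dim\bar{S}_i$ we may then assume, after finitely many further flips, that $\varphi_i$ restricts to an isomorphism of log pairs on each such stratum, and in particular identifies $\lfloor B_{S_i} \rfloor$ with $\lfloor B_{S_{i+1}} \rfloor$. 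Hence $c_{S_{i+1}}(E) \not\subseteq \lfloor B_{S_{i+1}} \rfloor$ forces $c_{S_i}(E) \not\subseteq \lfloor B_{S_i} \rfloor$, so $E \in N_i(\alpha)$. Summing over $\alpha$ yields $d_I(S_i, B_{S_i}) \ge d_I(S_{i+1}, B_{S_{i+1}})$.

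Next I would treat the strict inequality in the cases where $g\colon S_{i+1}\to T_i$ contracts a prime divisor $G$ (types (SD) and (DD)). Since $X_{i+1} \to Z_i$ is the flip, $K_{X_{i+1}} + B_{i+1}$ is ample over $Z_i$, hence $K_{S_{i+1}} + B_{S_{i+1}}$ is ample over $T_i$, while $K_{S_i} + B_{S_i}$ is anti-ample over $T_i$. Pulling back to a common resolution of $\varphi_i$ and invoking the negativity lemma on both sides, one sees that the discrepancies over $S_i$ are \emph{strictly} smaller than those over $S_{i+1}$ along the valuations supported over $g(\Exc(g))$; a bookkeeping argument then exhibits a valuation $E$ lying in $N_i(\alpha)\setminus N_{i+1}(\alpha)$ for some $\alpha \in D(I)$, so that $d_I(S_i, B_{S_i}) > d_I(S_{i+1}, B_{S_{i+1}})$. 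The hard part here — and the main obstacle of the whole lemma — is to locate this ``lost'' valuation, i.e.\ to reconcile the strict drop of discrepancies over the contracted locus with the two constraints built into the difficulty (the threshold $-\alpha$ and the condition $c_S(E)\not\subseteq\lfloor B_S\rfloor$), since the contracted divisor $G$ itself is a genuine divisor on $S_{i+1}$ and need not have negative discrepancy on $S_i$. This is exactly the content of \cite[Proposition 4.2.14]{Fuj07}, a stratified version of Shokurov's classical difficulty estimate.

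Finally, the conclusion follows at once: each $d_I(S_j, B_{S_j})$ is a finite non-negative integer by Lemma~\ref{lem: finiteness of coefficients}; by the two previous steps the sequence is non-increasing and drops strictly at every index whose diagram is of type (SD) or (DD); hence only finitely many indices can be of those types, and after finitely many flips every diagram $S_i \to T_i \leftarrow S_{i+1}$ is of type (SS) or (DS).
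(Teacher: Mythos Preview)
The paper does not give its own proof of this lemma; it is simply quoted from \cite[Proposition 4.2.14]{Fuj07} and used as a black box inside Step~2 of the proof of Theorem~\ref{thm: special termination char p}. So there is nothing in the paper to compare your argument against beyond the bare citation.

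Your outline is essentially Fujino's argument and is sound at that level of detail: the monotonicity of discrepancies comes from Lemma~\ref{lem: a increase}, the treatment of the center condition uses the induction on $\dim \bar{S}_i$ already set up in Step~2, and the final ``therefore'' is the obvious descent on a bounded non-negative integer. One point you pass over too quickly is the sentence ``Hence $c_{S_{i+1}}(E)\not\subseteq\lfloor B_{S_{i+1}}\rfloor$ forces $c_{S_i}(E)\not\subseteq\lfloor B_{S_i}\rfloor$'': knowing that $\varphi_i$ restricts to an isomorphism $\lfloor B_{S_i}\rfloor\xrightarrow{\sim}\lfloor B_{S_{i+1}}\rfloor$ does not by itself control how centers of arbitrary valuations move under $\varphi_i$, and the actual argument has to go through the factorization via $T_i$ (or a common resolution) together with the induction hypothesis. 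For the strict inequality you slightly obscure the punchline: the ``lost'' valuation is $G$ itself, with $\alpha$ equal to the coefficient $\beta$ of $G$ in $B_{S_{i+1}}$, since $a(G,S_{i+1},B_{S_{i+1}})=-\beta$ while $a(G,S_i,B_{S_i})<-\beta$ by the strict negativity over the flipped locus, and $\beta\in D(I)$ because adjunction keeps coefficients in $D(I)$; the only thing left to check is that $c_{S_i}(G)\not\subseteq\lfloor B_{S_i}\rfloor$, which again uses the inductive identification of the boundary loci. These are exactly the verifications carried out in \cite{Fuj07}, to which both you and the paper ultimately defer.
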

		
		Hence by shifting the index $i$, we may assume that $S_i \to T_i \leftarrow S_{i+1}$ is of type (SS) or (DS) for all $i \ge 0$. Since the number of type (DS) is bounded by the Picard number of $S_0$, there are only finitely many $i \ge 0$ such that $S_i \to T_i \leftarrow S_{i+1}$ is of type (DS). Therefore by further shifting the index $i$ we may assume that $S_i \to T_i \leftarrow S_{i+1}$ is of type (SS) for all $i \ge 0$.
		
		\medskip
		
		The next lemma guarantees that the coefficients of $B_{S_i}$ eventually become stationary.
		
		\begin{lem}[{\cite[Lemma 4.2.15]{Fuj07}}]
			By shifting the index $i$, we may assume that $a(E, S_i, B_{S_i}) = a(E, S_{i+1}, B_{S_{i+1}})$ for every $i$ if $E$ is a divisor on both $S_i$ and $S_{i+1}$.
		\end{lem}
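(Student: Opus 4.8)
The plan is to exploit the hypothesis, now in force, that every $S_i\to T_i\leftarrow S_{i+1}$ is of type (SS), so that the induced birational map $\varphi_i\colon S_i\dashrightarrow S_{i+1}$ is an isomorphism in codimension one; such a map identifies the prime divisors on $S_i$ with those on $S_{i+1}$, and under this identification the assertion becomes: after a further shift of the index, $\mathrm{mult}_E(B_{S_i})$ is independent of $i$ for every prime divisor $E$. I would deduce this from two inputs — the monotonicity of discrepancies along the sequence, Lemma \ref{lem: a increase}, and the fact that, the coefficients of $B$ being fixed once and for all, the coefficients of the various $B_{S_i}$ can take only finitely many values in any interval $[0,1-c]$ with $c>0$, which is exactly Lemma \ref{lem: finiteness of coefficients}.

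First I would fix a prime divisor $E$ and set $b^E_i:=\mathrm{mult}_E(B_{S_i})\in[0,1]$, where $E$ is viewed on every $S_i$ via the above identifications. Applying Lemma \ref{lem: a increase} to the divisorial valuation $E$ gives $a(E,S_i,B_{S_i})\le a(E,S_{i+1},B_{S_{i+1}})$, which for a prime divisor on the $S_i$'s means $\mathrm{mult}_E(B_{S_i})\ge\mathrm{mult}_E(B_{S_{i+1}})$; so $(b^E_i)_i$ is non-increasing. Since each $B_{S_i}$ is effective, a divisor which is not a component of $B_{S_i}$ is not a component of $B_{S_j}$ for any $j\ge i$; hence no new components are created along the sequence, and for every $E$ that is not a component of $B_{S_0}$ the desired equality $a(E,S_i,B_{S_i})=a(E,S_{i+1},B_{S_{i+1}})$ already holds for all $i$. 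It therefore suffices to treat the finitely many $E$ that are components of $B_{S_0}$.

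For such an $E$ I would show that $(b^E_i)_i$ is eventually constant. By adjunction for dlt pairs, each $B_{S_i}$ has coefficients in $D(I)$, where $I$ is the finite coefficient set of $B$ (which does not change under the MMP); concretely $b^E_i = 1-\frac{1}{m}+\sum_j\frac{r_j}{m}b_j$ for some $m\in\bZ_{>0}$ and $r_j\in\bZ_{\ge0}$ depending on $i$, the $b_j\in(0,1]$ being the coefficients of $B$. If $b^E_i=1$ for all $i$ there is nothing to prove; otherwise pick $i_0$ with $c:=1-b^E_{i_0}>0$, so that $b^E_i\le 1-c$ for all $i\ge i_0$. By Lemma \ref{lem: finiteness of coefficients} only finitely many pairs $(m,(r_j))$ are then admissible, hence $b^E_i$ takes only finitely many values for $i\ge i_0$, and a non-increasing sequence with finitely many values is eventually constant. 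Taking the maximum of the finitely many stabilization indices over the components $E$ of $B_{S_0}$ and shifting $i$ accordingly yields the lemma.

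I expect the only delicate point to be the bookkeeping of the different: one must check that passing from $(X_i,B_i)$ to the normalization $S_i$ of the fixed non-klt center and forming $B_{S_i}$ by adjunction keeps all coefficients inside the single set $D(I)$ attached to the fixed boundary $B$, so that Lemma \ref{lem: finiteness of coefficients} applies uniformly along the whole sequence; the coefficient-$1$ components then cause no trouble, being absorbed into the trivial case $b^E_i\equiv 1$ above rather than making the round-downs $\lfloor B_{S_i}\rfloor$ fluctuate.
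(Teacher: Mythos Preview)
The paper does not supply its own proof of this lemma; it simply cites \cite[Lemma 4.2.15]{Fuj07} and moves on, so there is nothing to compare against line by line. Your argument is exactly the standard one (and essentially Fujino's): under the (SS) hypothesis the prime divisors are identified, Lemma~\ref{lem: a increase} forces the coefficients $b^E_i$ to be non-increasing, adjunction confines them to a set in which (by Lemma~\ref{lem: finiteness of coefficients}) any interval $[0,1-c]$ meets only finitely many values, and a non-increasing sequence in a well-ordered-from-above set stabilizes. The one point you rightly flag---that iterated adjunction to a stratum of codimension $k$ a priori lands the coefficients in $D^k(I)$ rather than $D(I)$---is harmless: either one invokes the standard fact that $D(D(I))=D(I)$, or one simply observes that Lemma~\ref{lem: finiteness of coefficients} applies equally to $D^k(I)$ for the fixed $k$ determined by the dimension of the stratum, which is all you use.
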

		
		To summarize, by shifting the index $i$, we may assume that $S_i \to T_i \leftarrow S_{i+1}$ is of type (SS) and that $\varphi_{i *}(B_{S_i})=B_{S_{i+1}}$ for all $i \ge 0$.
		
		\medskip
		
		Denote by $\lambda_i$ the scaling number in each step, i.e. $\lambda_i := \inf\{\lambda \in \bR \mid K_{X_i} + B_i + \lambda A_i\ \mathrm{is\ nef}\}$. Since $A=A_0$ is ample, we may choose a general member of $|A|_{\bQ}$ and assume that $(X_0, B_0 + \lambda_0 A_0)$ is dlt. By adjunction for dlt pairs, $(S_0, B_{S_0} + \lambda_0 A_{S_0})$ is also dlt.
		
		\[
		\xymatrix{
			S_0^0 \ar@{-->}[rr] \ar[d] & & S_0^{k_0} = S_1^0 \ar[d] \ar@{-->}[rr] & & S_1^{k_1} = S_2^0 \ar[d] \\
			S_0 \ar[dr] & & S_1 \ar[dl] \ar[dr] & & S_2 \ar[dl] \\
			& T_0 & & T_1 &
		}
		\]
		
		Let $f: S_0^0 \to S_0$ be a small $\bQ$-factorial dlt modification. Then run a $(K_{S_0^0} + B_{S_0^0})$-MMP over $T_0$ with scaling of $\lambda_0 A_{S_0^0}$, where $K_{S_0^0} + B_{S_0^0} = f^*(K_{S_0} + B_{S_0})$ and $A_{S_0^0} = f^* A_{S_0}$. By Theorem \ref{thm: termination of flips with scaling of A for dlt threefolds} this MMP terminates with a minimal model $(S_0^{k_0}, B_{S_0^{k_0}})$. Since $K_{S_1} + B_{S_1}$ is ample over $T_0$, $(S_1, B_{S_1})$ is an ample model of $(S_0^0, B_{S_0^0})$ over $T_0$. Therefore $S_0^{k_0} \to T_0$ factors through $S_1$. Since $K_{S_1} + B_{S_1} + \lambda_1 A_{S_1}$ is nef, so is $K_{S_0^{k_0}} + B_{S_0^{k_0}} + \lambda_1 A_{S_0^{k_0}}$. Set $S_1^0 := S_0^{k_0}$ and continue.
		
		Then we get a sequence of steps of $(K_{S_0^0} + B_{S_0^0})$-MMP:
		$$
		S_0^0 \dashrightarrow S_0^1 \dashrightarrow \cdots \dashrightarrow S_0^{k_0} = S_1^0 \dashrightarrow S_1^1 \dashrightarrow \cdots \dashrightarrow S_i^0 \dashrightarrow \cdots.$$
		We claim that this MMP is actually a $(K_{S_0^0} + B_{S_0^0})$-MMP with scaling of $\lambda_0 A_{S_0^0}$. Note that $K_{S_0^0} + B_{S_0^0} + \lambda_0 A_{S_0^0}$ is trivial over $T_0$, as $K_{X_0} + B_0 + \lambda_0 A_0$ is trivial over $Z_0$. Thus the MMP $S_0^0 \dashrightarrow S_1^0$ is $(K_{S_0^0} + B_{S_0^0} + \lambda_0 A_{S_0^0})$-trivial, and so $\lambda_0$ is always the relative nef threshold. Therefore the above sequence is a $(K_{S_0^0} + B_{S_0^0})$-MMP with scaling of $\lambda_0 A_{S_0^0}$ (globally).
		
		By the termination of MMP with scaling for threefolds in positive characteristic (Theorem \ref{thm: termination of flips with scaling of A for dlt threefolds}), $S_i^0 = S_j^0$ for $i,j$ sufficiently large.
		
		\smallskip
		
		Finally we show that $S_i = S_{i+1}$ for $i \gg 0$. This is because $K_{S_i} + B_{S_i}$ and $K_{S_{i+1}} + B_{S_{i+1}}$ have the same pullback on $S_i^0 = S_{i+1}^0$ and are relatively anti-ample and ample respectively, which is impossible if $S_i \neq S_{i+1}$.
		
		\bigskip
		
		\textbf{Step 3.} By Step 2, after finitely many flips, $(S_i, B_{S_i}) \simeq (S_{i+1}, B_{S_{i+1}})$. Applying the negativity lemma to $S_i \to T_i \leftarrow S_{i+1}$ we see that $\bar{S_i}$ contains no flipping curves and $\bar{S}_{i+1}$ contains no flipped curves. In particular, $\lfloor B_i \rfloor$ contains no flipping curves and no flipped curves. As a result, $\lfloor B_i \rfloor$ cannot contain the whole flipping locus. If the flipping locus intersects $\lfloor B_i \rfloor$, then there exists a flipping curve $C$ such that $C \cdot \lfloor B_i \rfloor > 0$. Hence $\lfloor B_{i+1} \rfloor$ intersects every flipped curve negatively. So $\lfloor B_{i+1} \rfloor$ contains a flipped curve, which is a contradiction. Therefore the flipping locus is disjoint from $\lfloor B_i \rfloor$. Similarly for the flipped locus.
	\end{proof}
	
	\smallskip
	
	\begin{rem}
		The proof of Theorem \ref{thm: special termination char p} actually applies to higher dimensions. Roughly speaking, if we know the termination of MMP with scaling in dimension $\le n-1$ (under some conditions), then we can deduce the special termination of MMP with scaling in dimension $n$ (under the same conditions).
	\end{rem}
	
	We can use the same method to deduce the special termination of MMP with scaling in mixed characteristic.
	
	\begin{thm}\label{thm: special termination mixed}
		Let $X$ be a normal $\bQ$-factorial fourfold over a DVR $R$ of mixed characteristic whose residue field $k$ is perfect with char $p>5$, and $B$ an effective $\bR$-divisor such that $(X, B)$ is dlt. Let
		$$
		X_0 \dashrightarrow X_1 \dashrightarrow X_2 \dashrightarrow \cdots \dashrightarrow X_i \dashrightarrow \cdots,
		$$
		be a sequence of $(K_X+B)$-flips with scaling of an ample divisor $A$. Then after finitely many flips the flipping locus and the flipped locus are disjoint from $\lfloor B_i \rfloor$, where $B_i$ is the strict transform of $B$ on $X_i$.
	\end{thm}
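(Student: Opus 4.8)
The plan is to rerun, essentially verbatim, the three-step argument proving Theorem \ref{thm: special termination char p}, replacing the one appeal to the positive-characteristic termination of three-dimensional flips with scaling by the appropriate mixed-characteristic statement. First I would fix the notation of that proof: $\psi_i: X_i\to Z_i$ is the flipping contraction, $X_{i+1}\to Z_i$ the flip, $\bar S_i$ a stratum of $\lfloor B_i\rfloor$, $S_i\to\bar S_i$ and $T_i\to\psi_i(\bar S_i)$ the normalizations (homeomorphisms by \cite[Lemma 2.28]{BMP+20}), and $(S_i,B_{S_i})$ the dlt pair obtained from adjunction, with $(K_{X_i}+B_i)|_{S_i}=K_{S_i}+B_{S_i}$. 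Step 1 is unaffected: the set of non-klt centers is finite and, by the negativity lemma \cite[Lemma 3.38]{KM98}, its cardinality drops whenever the flipping locus contains a non-klt center, so after finitely many flips it contains none, and thereafter each $\varphi_i$ restricts to a birational map $S_i\dashrightarrow S_{i+1}$. Lemma \ref{lem: a increase}, Lemma \ref{lem: finiteness of coefficients}, the difficulty function $d_I$, and the reductions via \cite[Proposition 4.2.14]{Fuj07} and \cite[Lemma 4.2.15]{Fuj07} are all characteristic-insensitive, so in Step 2 I may again reduce to the case where all $S_i\to T_i\leftarrow S_{i+1}$ are of type (SS) with $\varphi_{i*}B_{S_i}=B_{S_{i+1}}$.

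The single genuine change is in the auxiliary lower-dimensional MMP inside Step 2. Because the flips and the contractions $\psi_i$ are projective over $\Spec R$, the image of $\bar S_i$ in $\Spec R$ is constant along the sequence; hence either every $\bar S_i$ (and every $T_i$) is \emph{vertical}, i.e.\ contained in the special fibre, or every $\bar S_i$ is \emph{horizontal}, i.e.\ dominates $\Spec R$. After perturbing by a general member of $|A|_{\bQ}$ so that $(S_0,B_{S_0}+\lambda_0 A_{S_0})$ is dlt, I would pass to a small $\bQ$-factorial dlt modification $f: S_0^0\to S_0$ (which exists for dlt pairs of dimension $\le 3$ over a perfect field of characteristic $p>5$, and over $\Spec R$ by \cite{BMP+20}), set $K_{S_0^0}+B_{S_0^0}=f^{*}(K_{S_0}+B_{S_0})$ and $A_{S_0^0}=f^{*}A_{S_0}$, and run a $(K_{S_0^0}+B_{S_0^0})$-MMP over $T_0$ with scaling of $\lambda_0 A_{S_0^0}$. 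In the vertical case $T_0$ is quasi-projective over the perfect residue field $k$, which has characteristic $p>5$, and termination follows from Theorem \ref{thm: termination of flips with scaling of A for dlt threefolds}; in the horizontal case $T_0$ is quasi-projective over the excellent DVR $R$, all of its residue fields have characteristic $0$ or $p>5$, and $S_0^0$ dominates $\Spec R$, so termination follows from Theorem \ref{thm: MMP for dlt threefolds in mixed char}. The remaining bookkeeping of Step 2 — that $\lambda_0$ stays the relative nef threshold so the whole sequence is one global MMP with scaling of $\lambda_0 A_{S_0^0}$, that $(S_1,B_{S_1})$ is the ample model of $(S_0^0,B_{S_0^0})$ over $T_0$ so $S_0^{k_0}$ factors through $S_1$, and that $S_i^0=S_j^0$ for $i,j\gg 0$ forces $S_i=S_{i+1}$ for $i\gg 0$ by comparing relatively anti-ample and ample classes with a common pullback — is identical to the char $p$ argument. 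Step 3 is then word-for-word as in Theorem \ref{thm: special termination char p}: once $(S_i,B_{S_i})\simeq(S_{i+1},B_{S_{i+1}})$, the negativity lemma applied to $S_i\to T_i\leftarrow S_{i+1}$ forces $\lfloor B_i\rfloor$ to contain no flipping curve and $\lfloor B_{i+1}\rfloor$ no flipped curve, and the intersection-number argument then shows the flipping and flipped loci are disjoint from $\lfloor B_i\rfloor$.

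The main obstacle is not conceptual but a matter of verifying hypotheses, concentrated in the horizontal case: one must check that $K_{S_0^0}+B_{S_0^0}+\lambda_0 A_{S_0^0}$ is nef over $T_0$, that $\lambda_0 A_{S_0^0}\ge 0$ is big over $T_0$, and that $\mathbf{B}_+(\lambda_0 A_{S_0^0}/T_0)$ contains no non-klt centre of $(S_0^0,B_{S_0^0}+\lambda_0 A_{S_0^0})$, all of which I expect to inherit from the corresponding properties of $A_0$ on $X_0$ — preserved along the MMP with scaling by \cite[Lemma 3.10.11]{BCHM10} — via adjunction for dlt pairs and pullback along $f$, and that the image condition for Theorem \ref{thm: MMP for dlt threefolds in mixed char} really is met (it is, precisely because in the horizontal case $S_0^0$ dominates $\Spec R$, whereas the vertical case is exactly where one must instead invoke the equal-characteristic Theorem \ref{thm: termination of flips with scaling of A for dlt threefolds}). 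No new idea beyond the positive-characteristic proof is needed.
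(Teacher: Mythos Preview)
Your proposal is correct and matches the paper's own proof essentially verbatim: the paper's proof of Theorem \ref{thm: special termination mixed} is a single sentence stating that the argument is identical to that of Theorem \ref{thm: special termination char p}, except that in Step 2 one invokes Theorem \ref{thm: MMP for dlt threefolds in mixed char} in place of Theorem \ref{thm: termination of flips with scaling of A for dlt threefolds} when $S_0$ is not supported on the special fibre, which is precisely your vertical/horizontal dichotomy.
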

	
	\begin{proof}
		The proof is exactly the same as the proof of Theorem \ref{thm: special termination char p}, except that in Step 2 we use Theorem \ref{thm: MMP for dlt threefolds in mixed char} instead of Theorem \ref{thm: termination of flips with scaling of A for dlt threefolds} to conclude the termination of the $(K_{S^0_0}+B_{S^0_0})$-MMP if $S_0$ is not supported on the special fiber.
	\end{proof}

	\section{Relative MMP over DVRs}
	
	In this section we prove Theorem \ref{thm: main thm mixed} and \ref{thm: main thm char p}. First we prove the following base point free theorem (see \cite{Pos21, BBS21} for other recent results on abundance in mixed characteristic). This is similar to \cite[Proposition 5.1]{HW20} and we actually use the similar strategy in the proof. However, instead of assuming $K_X+\Delta$ is nef and big we assume that the boundary $\Delta$ is big. Actually this is enough for the purpose of running MMP with scaling of an ample divisor. Besides, under this new assumption we are able to simplify the proof by using the results in \cite{HNT20}.
	
	\begin{prop}\label{prop: Kx semiample mixed char}
		Let $(X, \Delta)$ be a four-dimensional $\bQ$-factorial dlt pair and $g:X\to Z$ a projective contraction, where $Z$ a projective variety over a DVR $R$. Suppose that $R$ has perfect residue field of characteristic $p > 5$. Let $s, \eta \in \Spec R$ be the special and the generic point respectively, and let $\phi: X \to \Spec R$ be the natural morphism. Suppose that $\lfloor \Delta \rfloor = \Supp(\phi^{-1}(s))$ and one of the following conditions holds:
		\begin{enumerate}
			\item $R$ is of mixed characteristic $(0,p)$, $K_X + \Delta$ is $g$-nef, and $\Delta$ is $g$-big;
			\item $R$ is purely of positive characteristic $p$, $K_X + \Delta$ is $g$-nef and $g$-big, and $\Delta$ is $g$-big.
		\end{enumerate}
		Then $K_X + \Delta$ is $g$-semiample.
	\end{prop}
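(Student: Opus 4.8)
The plan is to reduce the $g$-semiampleness of $K_X+\Delta$ to statements on the generic and special fibres of $\phi$ via Witaszek's semiampleness criterion (Theorem~\ref{thm: wit21 1.2}), and then to settle each fibre separately: the generic fibre by a base point free theorem in its residue characteristic, and the special fibre --- where the real content lies --- by adjunction to $\lfloor\Delta\rfloor$ together with abundance for threefolds over perfect fields of characteristic $p>5$. This is the strategy of \cite[Proposition 5.1]{HW20}; assuming only that $\Delta$ (rather than $K_X+\Delta$) is $g$-big still suffices for the generic-fibre step, and the results of \cite{HNT20} streamline the special-fibre step. After standard reductions we may assume that $\Delta$ is a $\bQ$-divisor and $m_0(K_X+\Delta)$ is Cartier for some $m_0>0$.

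\textbf{Step 1: reduction to the fibres.} Fix a sufficiently ample divisor $H$ on $Z$. It suffices to show that $(K_X+\Delta)|_{X_\eta}$ is semiample over $Z_\eta$ and that $(K_X+\Delta)|_{X_s}$ is semiample over $Z_s$. Granting this, $(K_X+\Delta+g^*H)|_{X_\eta}$ and $(K_X+\Delta+g^*H)|_{X_s}$ are semiample, each being the sum of a relatively semiample divisor and the pullback of a sufficiently ample divisor from $Z_\eta$, resp.\ $Z_s$; since $X_{\bQ}$ equals $X_\eta$ or is empty, Theorem~\ref{thm: wit21 1.2} applied to $\phi\colon X\to\Spec R$ and $\mathcal{O}_X(m(K_X+\Delta+g^*H))$ for $m$ divisible enough shows $K_X+\Delta+g^*H$ is semiample. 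Finally, if $\ell(K_X+\Delta+g^*H)$ is globally generated, then by the projection formula $g_*\mathcal{O}_X(\ell m_0(K_X+\Delta+g^*H))\cong g_*\mathcal{O}_X(\ell m_0(K_X+\Delta))\otimes\mathcal{O}_Z(\ell m_0 H)$; pushing the surjection $H^0\otimes\mathcal{O}_X\twoheadrightarrow\mathcal{O}_X(\ell m_0(K_X+\Delta+g^*H))$ through $g^*g_*$ and untwisting by $\mathcal{O}_X(-\ell m_0 g^*H)$ shows that $g^*g_*\mathcal{O}_X(\ell m_0(K_X+\Delta))\to\mathcal{O}_X(\ell m_0(K_X+\Delta))$ is surjective, i.e.\ $K_X+\Delta$ is $g$-semiample.

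\textbf{Step 2: the generic fibre.} Since $\lfloor\Delta\rfloor=\Supp\phi^{-1}(s)$ lies in the special fibre, $(X,\Delta)$ is klt on the open set $X_\eta=X\setminus X_s$, so $(X_\eta,\Delta_\eta)$ is a klt threefold and $K_{X_\eta}+\Delta_\eta=(K_X+\Delta)|_{X_\eta}$ is nef over $Z_\eta$. In case (1) the fraction field of $R$ has characteristic $0$, and as $\Delta$ is $g$-big we may write $\Delta_\eta\sim_{\bR,Z_\eta}A_\eta+E_\eta$ with $A_\eta$ ample over $Z_\eta$ and $E_\eta\ge0$, so that for $0<\epsilon\ll1$ the pair $(X_\eta,(1-\epsilon)\Delta_\eta+\epsilon E_\eta)$ is klt and $K_{X_\eta}+\Delta_\eta\sim_{\bR,Z_\eta}\big(K_{X_\eta}+(1-\epsilon)\Delta_\eta+\epsilon E_\eta\big)+\epsilon A_\eta$; the characteristic-$0$ base point free theorem (\cite{KM98}) then applies. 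In case (2), $K_{X_\eta}+\Delta_\eta$ is nef and big over $Z_\eta$ and $(X_\eta,\Delta_\eta)$ is a klt threefold over a field of characteristic $p>5$, so the base point free theorem for klt threefolds in positive characteristic applies (\cite{BW17}, and \cite{DW19} for imperfect residue fields). Either way $(K_X+\Delta)|_{X_\eta}$ is semiample over $Z_\eta$.

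\textbf{Step 3: the special fibre, and the main obstacle.} It suffices to show $(K_X+\Delta)|_S$ is semiample over $Z$, where $S:=\lfloor\Delta\rfloor$ (nilpotents of $X_s$ do not affect semiampleness). For each component $D_i$ of $S$, adjunction for dlt pairs produces on the normalization $D_i^\nu$ a dlt pair $(D_i^\nu,\Delta_{D_i^\nu})$ with $(K_X+\Delta)|_{D_i^\nu}=K_{D_i^\nu}+\Delta_{D_i^\nu}$ nef over $Z$; as $D_i^\nu$ is a threefold over the perfect field $k=k(s)$ of characteristic $p>5$, abundance for dlt threefolds in this setting (see \cite{HNT20} and the references therein) gives that $K_{D_i^\nu}+\Delta_{D_i^\nu}$ is semiample over $Z$. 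One then glues these relatively semiample structures along the strata of $S$: the lower-dimensional strata of $(X,\Delta)$ carry the restriction of $K_X+\Delta$ as a nef, hence (by abundance in dimension $\le2$) semiample divisor, and the induced fibrations should be compatible because the strata are normal up to universal homeomorphism and meet snc-wise on the dense dlt locus. I expect this gluing --- in effect the abundance problem for semi-log-canonical varieties along the non-normal locus of $S$ --- to be the main obstacle; by contrast Steps 1 and 2 are routine, and the bigness hypothesis enters only in Step 2, so on the special fibre one needs only nefness of $K_X+\Delta$, which is exactly the point where \cite{HNT20} simplifies the argument of \cite[Proposition 5.1]{HW20}.
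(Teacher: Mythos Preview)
Your Steps 1 and 2 are fine and match the paper's approach. The genuine gap is in Step 3: you correctly identify the gluing as the main obstacle, but you do not carry it out, and your concluding claim that ``the bigness hypothesis enters only in Step~2, so on the special fibre one needs only nefness of $K_X+\Delta$'' is not correct. In the paper's proof the bigness of $\Delta$ is used essentially on the special fibre, and is precisely what makes the gluing work.

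Concretely, the paper writes $\Delta\sim_{\bQ}H+F+G$ with $H$ ample, $F\ge0$ supported on $X_s$, and $G\ge0$ containing no component of $X_s$; after a dlt modification one may assume $(X,\Delta+\delta G)$ is dlt, so that $G$ avoids all strata of $\lfloor\Delta\rfloor$. One then perturbs to $\Delta_{\epsilon,\delta}:=(1-\delta)\Delta+\delta(H+F+G)+\epsilon X_s\sim_{\bQ}\Delta$, which is klt for suitable $\epsilon$. Choosing the coefficients $f_i$ of $F$ generically and increasing $\epsilon$ produces rational numbers $\epsilon_1<\cdots<\epsilon_r$ with $\lfloor\Delta_{\epsilon_i,\delta}\rfloor=U_i:=E_1+\cdots+E_i$ (after reindexing). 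This linear ordering of the components of $X_s$ is the device that lets Keel's gluing criterion \cite[Corollary~2.9]{Kee99} run inductively: assuming $(K_X+\Delta)|_{U_{i-1}}$ semiample, one must show that for the semiample map $g_2\colon E_i^\nu\to V$ induced by $(K_X+\Delta)|_{E_i^\nu}$, the fibres of $g_2|_{U_{i-1}\cap E_i}$ are geometrically connected. Here the ample piece $H$ is used again: since $(K_X+\Delta)|_{E_i^\nu}\equiv_V 0$, the divisor $-(K_X+\Delta_{\epsilon_i,\delta}-\delta H)|_{E_i^\nu}$ is ample over $V$, so the Koll\'ar--Shokurov-type connectedness theorem \cite[Theorem~1.2]{NT20} applies and identifies the non-klt locus of $(E_i^\nu,\Delta'_{E_i^\nu})$, which equals $U_{i-1}\cap E_i$ because $(X,\Delta+\delta G)$ is dlt, as having geometrically connected fibres over $V$. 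The bigness of $\Delta$ is also used to show that the adjoint boundary $\Delta_{E_i^\nu}$ is big, which is the hypothesis under which \cite[Theorem~1.4(3)]{HNT20} gives semiampleness on $E_i^\nu$; invoking an unqualified ``abundance for dlt threefolds'' here is not what the argument actually needs.

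In short, your outline stops exactly at the step that carries the content of the proposition. The missing ingredients are: the dlt modification ensuring $G$ misses all strata, the perturbed boundaries $\Delta_{\epsilon_i,\delta}$ giving the filtration $U_1\subset\cdots\subset U_r=\Supp X_s$, Keel's inductive gluing along this filtration, and the connectedness input from \cite{NT20} fed by the ample $\delta H$. All of these rely on $\Delta$ being $g$-big.
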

	
	\begin{proof}
		We only prove the case $Z=\Spec R$ as the general case is quite similar.
		
		Write $\Supp X_s = \sum_{i=1}^r E_i$ for irreducible divisors $E_i$. Notice that $(X_\eta,\Delta_{\eta})$ is klt and $\Delta_\eta$ is big. By the abundance theorem for threefolds in characteristic 0 or \cite[Theorem 1.4]{DW19}, $(K_X+\Delta)|_{X_\eta}$ is semiample. Then by Theorem \ref{thm: wit21 1.2}, it suffices to show that $(K_X + \Delta)|_{X_s}$ is semiample.
		
		Since $\Delta$ is big, we may write $\Delta \sim_\bQ H + F + G$ where $H$ is ample, $F$ and $G$ are effective, and $F$ is supported on $X_s$ while the support of $G$ contains no divisors in $X_s$. Let $\pi: Y \to X$ be a dlt modification of $(X, \Delta + \delta G)$ for some $0 < \delta \ll 1$ (see Lemma \ref{lem: dlt modification for fourfolds with general coef}). Let $\Delta_Y = \pi_*^{-1} \Delta + \Ex(\pi)$. Then $K_Y + \Delta_Y = \pi^*(K_X + \Delta)$ and $\pi_*^{-1} G$ contains no strata of $\lfloor \Delta_Y \rfloor$. Let $P$ be an effective $\pi$-exceptional divisor such that $-P$ is $\pi$-ample. We may assume that $\pi^*H - P$ is ample over $\Spec R$. Note that the support of $P$ is contained in $Y_s$ and that $\Delta_Y - \pi^* \Delta$ is supported on $\Ex(\pi)$. We have
		\begin{align*}
		\Delta_Y &\sim_\bQ \Delta_Y - \pi^*\Delta + \pi^*(H+F+G) - P + P  \\
		&=(\pi^*H - P) + (P + \pi^*F + \Delta_Y - \pi^*\Delta + \pi^*G - \pi_*^{-1}G) + \pi_*^{-1}G.
		\end{align*}
		
		Let $H_Y = \pi^*H-P-aY_s$, $F_Y = P + \pi^*F + \Delta_Y - \pi^*\Delta + \pi^*G - \pi_*^{-1}G + aY_s$, and $G_Y = \pi_*^{-1}G$. Then $H_Y$ is ample, $F_Y$ is supported on $Y_s$, and the support of $G_Y$ contains no divisors in $Y_s$. Furthermore, if we choose $a \gg 0$ then $F_Y$ is effective. Therefore, replacing $X, \Delta, H, F, G$ by $Y, \Delta_Y, H_Y, F_Y, G_Y$, we may assume in addition that $(X, \Delta + \delta G)$ is dlt.
		
		Let $\Delta_{\epsilon, \delta} := (1-\delta)\Delta + \delta(H+F+G) + \epsilon X_s \sim_\bQ \Delta$, where $\delta$ is sufficiently small. Then the pair $(X, \Delta_{\epsilon, \delta})$ is klt for some small but possibly negative $\epsilon$.
		
		Since $H$ is ample, we may further assume that the support of $F = \sum f_i E_i$ equals $X_s$ where $f_i$ are chosen generically. Then fixing $\delta$ and increasing $\epsilon$, we obtain a sequence of rational numbers $\epsilon < \epsilon_1 < \epsilon_2 < \cdots < \epsilon_r$ such that $U_i := \lfloor \Delta_{\epsilon_i, \delta} \rfloor = \sum^i_{j=1} E_j$ and $E_i$ occurs with coefficient one in $\Delta_{\epsilon_i, \delta}$. Here of course we have re-indexed the $E_i$ accordingly.
		\begin{claim}
			$(K_X + \Delta)|_{E^\nu_i}$ is semiample, where $E^\nu_i \to E_i$ is the normalization. Hence $(K_X + \Delta)|_{E_i}$ is also semiample.
		\end{claim}
		\begin{proof}
			Set $K_{E_i^\nu}+\Delta_{E_i^\nu}=(K_X+\Delta)|_{E_i^\nu}$. Since $\Delta$ is big, we may write $\Delta\sim_{\Qq}H'+F'+G'$, where $H'$ is ample, $F'$ is supported on $X_s$ and $G'\ge 0$ contains no divisors in $X_s$. By shifting $F'$ by a multiple of $\phi^{-1}(s)$, we may assume that $F'=E_i+L'$ where $\Supp L'$ does not contain $E_i$. For $0<\epsilon\ll1$, we have 
			\begin{align*}
			K_{X}+\Delta &\sim_{\Qq}(1-\epsilon)(K_X+E_i+\Delta-E_i)+\epsilon (K_X+E_i+L'+H'+G')\\
			&=K_X+E_i+\epsilon(H'+G')+\epsilon L'+(1-\epsilon)(\Delta-E_i).
			\end{align*}
			By the assumption $M'_{\epsilon}:=\epsilon L'+(1-\epsilon)(\Delta-E_i)$ is an effective divisor whose support does not contain $E_i$. Thus we have 
			\begin{align*}
			\Delta_{E_i^\nu}&=(K_X+\Delta)|_{E_i^\nu}-K_{E_i^\nu}\\
			&\sim_{\Qq}((K_X+E_i)|_{E_i^\nu}-K_{E_i^\nu})+\epsilon(H'+G')|_{E_i^\nu}+M'_{\epsilon}|_{E_i^\nu}.
			\end{align*}
			Since both $(K_X+E_i)|_{E_i^\nu}-K_{E_i^\nu}$ and $(\epsilon G'+M'_\epsilon)|_{E_i^\nu}$ are effective, $\Delta_{E_i^\nu}$ is big. Hence by \cite[Theorem 1.4(3)]{HNT20} the nef divisor $K_{E_i^\nu}+\Delta_{E_i^\nu}$ is semiample. Finally, $(K_X+\Delta)|_{E_i}$ is also semiample by \cite[Lemma 2.11(3)]{CT20}, since $E^\nu_i\to E_i$ is a universal homeomorphism.
		\end{proof}
		
		Now by induction we may assume that $(K_X+\Delta)|_{U_{i-1}}$ is semiample and we must show that $(K_X+\Delta)|_{U_i}$
		is semiample. By \cite[Corollary 2.9]{Kee99}, it suffices to show that $g_2|_{U_{i-1}\cap E_i}$ has connected geometric fibers
		where $g_2:E_i^\nu\to V$ is the morphism associated to the semiample $\Qq$-divisor $(K_X+\Delta)|_{E_i^\nu}$. Since $(K_X+\Delta)|_{E_i^\nu}\equiv_V 0$, we have $-(K_{E_i^\nu}+\Delta'_{E_i^\nu}):=-(K_X+\Delta_{\epsilon_i,\delta}-\delta H)|_{E_i^\nu}$
		is ample over $V$. By \cite[Theorem 1.2]{NT20}, the fibers of the non-klt locus of $(E_i^\nu,\Delta'_{E_i^\nu})$ are geometrically connected. This non-klt locus actually coincides with $U_{i-1}\cap E_i$ as $(X, \Delta + \delta G)$ is dlt. Hence the statement of the proposition follows.
	\end{proof}
	
	\begin{prop}\label{prop: cone theorem for semi-stable fourfold over Spec R}
		Let $(X, \Delta)$ be a four-dimensional $\bQ$-factorial dlt pair projective over a DVR $R$, where $R$ has perfect residue field of characteristic $p > 5$. Let $s \in \Spec R$ be the special point and let $\phi: X \to \Spec R$ be the natural morphism. Suppose that $\Supp(\phi^{-1}(s)) \subseteq \lfloor \Delta \rfloor$. When $R$ is purely of positive characteristic, we also assume that it is a local ring of a curve $C$ defined over a perfect field, and that $(X, \Delta) := (\mathcal{X} , \Phi) \times_C \Spec R$ for a four-dimensional $\bQ$-factorial dlt pair $(\mathcal{X} , \Phi)$ which is projective over $C$.\par
		Let $E_1, \cdots, E_r$ be the irreducible components of $\phi^{-1}(s)$ and let $E_i^\nu \to E_i$ be the normalization. Then we have
		\begin{enumerate}
			\item $\sum_{i=1}^r\overline{\mathrm{NE}}(E^\nu_i)\to\overline{\mathrm{NE}}(X/\Spec R)$ is surjective;
			\item $\overline{\mathrm{NE}}(X/\Spec R)=\overline{\mathrm{NE}}(X/\Spec R)_{K_X+\Delta\ge0}+\sum_{1\le i\le r,~j\ge1}\Rr_{\ge0}[\Gamma_{i,j}]$ for countably many curves $\Gamma_{i,j}\subseteq E_i$ such that $(K_X+\Delta)\cdot\Gamma_{i,j}<0$;
			\item $\overline{\mathrm{NE}}(X/\Spec R)=\overline{\mathrm{NE}}(X/\Spec R)_{K_X+\Delta+A\ge0}+\sum_{1\le i\le r,~1\le j\le m_i}\Rr_{\ge0}[\Gamma_{i,j}]$ for finitely many curves $\Gamma_{i,j}\subseteq E_i$ such that $(K_X+\Delta+A)\cdot\Gamma_{i,j}<0$, where $A$ is an ample $\Rr$-divisor.
		\end{enumerate}
	\end{prop}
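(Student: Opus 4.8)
The plan is to reduce all three assertions to the cone theorem for the three-dimensional components $E_i^\nu$ of the special fibre and to transport the resulting decomposition back to $X$ by pushing forward $1$-cycles along the finite morphisms $\pi_i\colon E_i^\nu\to X$.

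First I would prove (1). Since $X$ is projective over $\Spec R$, the cone $\overline{\mathrm{NE}}(X/\Spec R)$ lies in the finite-dimensional space $N_1(X/\Spec R)$ and is the closed convex cone generated by classes of curves $C\subseteq X$ that are proper over $\Spec R$ and contracted by $\phi$. As $\Spec R$ is local with closed point $s$, any such $C$ is contained in $\phi^{-1}(s)$, hence in $\Supp(\phi^{-1}(s))=E_1\cup\dots\cup E_r$, and therefore in a single $E_i$; since $E_i^\nu\to E_i$ is finite and surjective, $[C]$ is a positive multiple of $\pi_{i*}[C']$ for some curve class $C'$ on $E_i^\nu$. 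Thus $\mathrm{NE}(X/\Spec R)\subseteq\sum_i\pi_{i*}\overline{\mathrm{NE}}(E_i^\nu)$, and it remains to see that the right-hand side is closed. For this I would fix a $\phi$-ample $\Qq$-divisor $H$ on $X$; then $\pi_i^*H$ is ample on $E_i^\nu$, being the pullback under the finite morphism $\pi_i$ of $H|_{E_i}$, which is ample because $H|_{X_s}$ is. Hence $\overline{\mathrm{NE}}(E_i^\nu)=\Rr_{\ge0}\cdot\bigl(\overline{\mathrm{NE}}(E_i^\nu)\cap\{\pi_i^*H=1\}\bigr)$ with the indicated slice compact, and $\sum_i\pi_{i*}\overline{\mathrm{NE}}(E_i^\nu)$ equals the cone over the convex hull of the finite union of the compact sets $\pi_{i*}\bigl(\overline{\mathrm{NE}}(E_i^\nu)\cap\{\pi_i^*H=1\}\bigr)\subseteq\{H=1\}$. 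This convex hull is compact and avoids the origin (as $H$ is positive on $\overline{\mathrm{NE}}(X/\Spec R)\setminus\{0\}$), so the cone over it is closed; it therefore contains $\overline{\mathrm{NE}}(X/\Spec R)$, while the reverse inclusion is clear, giving (1).

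For (2) and (3): since $\Supp(\phi^{-1}(s))\subseteq\lfloor\Delta\rfloor$, each $E_i$ occurs with coefficient $1$ in $\Delta$, so dlt adjunction in dimension four yields a three-dimensional dlt pair $(E_i^\nu,\Delta_{E_i^\nu})$ with $K_{E_i^\nu}+\Delta_{E_i^\nu}=(K_X+\Delta)|_{E_i^\nu}$. In both the mixed- and the positive-characteristic case $E_i^\nu$ is a positive-characteristic projective threefold over the perfect field $k(s)$, which has characteristic $p>5$ (in the positive-characteristic case $k(s)$ is a finite extension of the perfect base field of $C$), so the cone theorem for three-dimensional dlt pairs over such fields (\cite{HNT20,BW17}, together with Lemma \ref{lem: base change from perfect field to algebraically closed field}, and passing to a $\Qq$-factorial dlt modification of $E_i^\nu$ if $\Qq$-factoriality is wanted) gives
$$
\overline{\mathrm{NE}}(E_i^\nu)=\overline{\mathrm{NE}}(E_i^\nu)_{K_{E_i^\nu}+\Delta_{E_i^\nu}\ge0}+\sum_{j\ge1}\Rr_{\ge0}[\Gamma'_{i,j}],\qquad (K_{E_i^\nu}+\Delta_{E_i^\nu})\cdot\Gamma'_{i,j}<0.
$$
Pushing forward by $\pi_i$ and using the projection formula, so that $(K_X+\Delta)\cdot\pi_{i*}\alpha=(K_{E_i^\nu}+\Delta_{E_i^\nu})\cdot\alpha$ for every $1$-cycle $\alpha$ on $E_i^\nu$, the first summand lands in the convex subcone $\overline{\mathrm{NE}}(X/\Spec R)_{K_X+\Delta\ge0}$ and each ray maps to $\Rr_{\ge0}[\Gamma_{i,j}]$ with $\Gamma_{i,j}:=\pi_i(\Gamma'_{i,j})\subseteq E_i$ and $(K_X+\Delta)\cdot\Gamma_{i,j}<0$. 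Combining with the surjectivity in (1),
$$
\overline{\mathrm{NE}}(X/\Spec R)=\sum_i\pi_{i*}\overline{\mathrm{NE}}(E_i^\nu)\subseteq\overline{\mathrm{NE}}(X/\Spec R)_{K_X+\Delta\ge0}+\sum_{i,\,j\ge1}\Rr_{\ge0}[\Gamma_{i,j}],
$$
and the reverse inclusion is immediate, which is (2). For (3) I would repeat the argument with the ample divisor $A$: $A|_{E_i^\nu}$ is ample, so the $(K_{E_i^\nu}+\Delta_{E_i^\nu}+A|_{E_i^\nu})$-form of the threefold cone theorem gives only finitely many negative extremal rays on each $E_i^\nu$, and pushing them forward and taking the union over the finitely many indices $i$ produces the asserted finite decomposition of $\overline{\mathrm{NE}}(X/\Spec R)$ relative to $K_X+\Delta+A$.

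The hard part is not this argument but its inputs: dlt adjunction in dimension four (to produce the pairs $(E_i^\nu,\Delta_{E_i^\nu})$) and the cone theorem for threefold dlt pairs in positive characteristic, both available under our hypotheses. Granting them, the only delicate point inside the proof is the closedness claim in (1) — that a finite sum of pushforwards of the Mori cones of the $E_i^\nu$ is closed — which rests on the ampleness of $H|_{X_s}$, hence of $\pi_i^*H$; everything else is a formal manipulation of the projection formula.
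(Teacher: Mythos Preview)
Your proof is correct and follows the same strategy as the paper: for (1) you observe that every relative curve lies in the special fibre and hence lifts to some $E_i^\nu$, and for (2)--(3) you apply dlt adjunction to obtain dlt threefold pairs $(E_i^\nu,\Delta_{E_i^\nu})$ and invoke the cone theorem of \cite{HNT20}, exactly as the paper does. The one place where you are more careful than the paper is in (1): the paper disposes of it in a single sentence (``any curve over the generic point $\eta$ extends to a curve on the special fibre''), whereas you explicitly verify that $\sum_i\pi_{i*}\overline{\mathrm{NE}}(E_i^\nu)$ is closed via the ample-slice compactness argument, which is what is actually needed to pass from $\mathrm{NE}(X/\Spec R)\subseteq\sum_i\pi_{i*}\overline{\mathrm{NE}}(E_i^\nu)$ to the corresponding inclusion of closures.
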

	\begin{proof}
		Since any curve over the generic point $\eta$ extends to a curve on the special fiber $\phi^{-1}(s)$, (1) follows immediately.\par
		For (2) and (3), we may suppose that $K_X+\Delta$ is not nef over $\Spec R$. Thus it is not nef over $s$. Since $K_X+\Delta$ is dlt and $E_i\subseteq\lf\Delta\rf$, $(E_i^\nu,\Delta_i)$ is also dlt, where $K_{E^\nu_i}+\Delta_i=(K_X+\Delta)|_{E_i^\nu}$. Then the statements follow by the cone theorem for dlt threefolds (\cite[Theorem 1.3]{HNT20}).
	\end{proof}
	
	\begin{prop}\label{prop: L semiample mixed char}
		Let $(X, \Delta)$ be a four-dimensional $\bQ$-factorial dlt pair projective over a DVR $R$, where $R$ 
		has perfect residue field of characteristic $p > 5$. Let $s, \eta \in \Spec R$ be the special and the generic point respectively, and let $\phi: X \to \Spec R$ be the natural morphism.  Let $A$ be a big $\bQ$-divisor on $X$ such that $(X, 
		\Delta+A)$ is dlt and $\mathbf{B}_+(A)$ does not contain any non-klt centers of $(X, \Delta+A)$.\par
		Suppose that $\Supp(\phi^{-1}(s)) \subseteq \lfloor \Delta \rfloor$ and one of the following conditions holds:
		\begin{enumerate}
			\item $R$ is of mixed characteristic $(0,p)$ and $L := K_X + \Delta + A$ is nef;
			\item $R$ is purely of positive characteristic $p$ and $L := K_X + \Delta + A$ is nef and big. 
		\end{enumerate}
		Then $L$ is semiample and induces a morphism $f: X \to W$ over $\Spec R$. In particular, every $f$-numerically trivial $\Qq$-Cartier divisor descends to a $\Qq$-Cartier divisor on $W$.
	\end{prop}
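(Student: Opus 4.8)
The plan is to run the argument of Proposition~\ref{prop: Kx semiample mixed char}, with $K_X+\Delta+A$ playing the role of $K_X+\Delta$ there and the bigness of $A$ (hence of $\Delta+A$) playing the role of the bigness of $\Delta$. The first step is a $\bQ$-linear perturbation: since $\Delta+A$ is big over $\Spec R$ and $\epsilon X_s$ is $\bQ$-linearly trivial over $\Spec R$, I would replace $\Delta+A$ by $\Gamma_{\epsilon,\delta}:=(1-\delta)(\Delta+A)+\delta(H'+F'+G')+\epsilon X_s$, where $\Delta+A\sim_{\bQ}H'+F'+G'$ with $H'$ ample, $F'$ supported on $X_s$, and $\Supp G'$ containing no divisor of $X_s$ (one may also take $\Supp G'$ to avoid the non-klt centers of $(X,\Delta+A)$, using the hypothesis on $\mathbf{B}_+(A)$). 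The factor $1-\delta<1$ pushes every component of $\lfloor\Delta+A\rfloor$ that is horizontal over $\Spec R$ --- in particular any horizontal component of $\lfloor\Delta\rfloor$ --- out of the floor, while increasing $\epsilon$ boosts the vertical components $E_1,\dots,E_r$ of $\phi^{-1}(s)$ up to coefficient one one at a time; after a dlt modification (Lemma~\ref{lem: dlt modification for fourfolds with general coef}) arranging $(X,\Delta+A+\delta G')$ dlt, this puts us in exactly the set-up of Proposition~\ref{prop: Kx semiample mixed char}. Note that $\Gamma_{\epsilon,\delta}\sim_\bQ\Delta+A$ (so $L\sim_\bQ K_X+\Gamma_{\epsilon,\delta}$ is unchanged), and that $(X_\eta,\Gamma_{\epsilon,\delta}|_{X_\eta})$ becomes klt, again because $1-\delta<1$ and $X_s|_{X_\eta}=0$.

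From here I would follow that proof essentially verbatim. By Theorem~\ref{thm: wit21 1.2} it suffices to show $L|_{X_\eta}$ and $L|_{X_s}$ are semiample. For the generic fibre, $(X_\eta,\Gamma_{\epsilon,\delta}|_{X_\eta})$ is a klt threefold pair with big boundary and $L|_{X_\eta}$ is nef, so $L|_{X_\eta}$ is semiample by abundance for threefolds with big boundary in characteristic $0$ in case~(1), and by \cite[Theorem 1.4]{DW19} in case~(2) (in the latter one also uses that $L|_{X_\eta}$ is big, which is part of hypothesis~(2)). For the special fibre, writing $\Supp X_s=\sum_{i=1}^{r}E_i$, adjunction for dlt pairs applied to $E_i$ in $(X,\Gamma_{\epsilon_i,\delta})$ gives a dlt threefold pair $(E_i^\nu,\Gamma_{E_i^\nu})$, with $E_i^\nu\to E_i$ the normalization, such that $K_{E_i^\nu}+\Gamma_{E_i^\nu}\sim_\bQ L|_{E_i^\nu}$ is nef and $\Gamma_{E_i^\nu}$ is big (its $\delta H'$-part restricts to an ample divisor on $E_i^\nu$); hence $L|_{E_i^\nu}$ is semiample by \cite[Theorem 1.4(3)]{HNT20}, so $L|_{E_i}$ is semiample by \cite[Lemma 2.11(3)]{CT20} since $E_i^\nu\to E_i$ is a universal homeomorphism. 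Gluing these by induction on $i$ along $(E_1\cup\cdots\cup E_{i-1})\cap E_i$ via \cite[Corollary 2.9]{Kee99} --- the needed connectedness of the fibres of the morphism attached to $L|_{E_i^\nu}$ over that stratum coming from \cite[Theorem 1.2]{NT20}, applied to $(E_i^\nu,\Gamma_{E_i^\nu}-\delta H'|_{E_i^\nu})$ whose anti-log-canonical class is relatively ample --- shows $L|_{X_s}$ is semiample, hence so is $L$.

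For the final assertion, let $f\colon X\to W$ be the morphism over $\Spec R$ induced by $|mL|$ for $m\gg0$; then $W$ is a normal projective $\Spec R$-scheme with $f_*\Oo_X=\Oo_W$. Given an $f$-numerically trivial $\bQ$-Cartier divisor $D$ on $X$, for a sufficiently ample $\bQ$-Cartier divisor $A_W$ on $W$ the divisors $D+f^*A_W$ and $f^*A_W-D$ are nef over $\Spec R$ (they are trivial on every $f$-contracted curve, and non-negative on the remaining finitely many extremal rays by the cone theorem, cf.\ Proposition~\ref{prop: cone theorem for semi-stable fourfold over Spec R}), hence semiample over $\Spec R$ with associated contractions factoring through $f$; thus $D+f^*A_W=f^*M$ for some $\bQ$-Cartier $M$ on $W$ and $D=f^*(M-A_W)$ descends as claimed.

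I expect the main obstacle, as in Proposition~\ref{prop: Kx semiample mixed char}, to be the inductive gluing on $X_s$: verifying that each different $\Gamma_{E_i^\nu}$ is genuinely a big-boundary dlt pair --- the one place where the hypotheses on $A$ (bigness and $\mathbf{B}_+(A)$ avoiding non-klt centers) do their essential work --- and that the non-klt locus of $(E_i^\nu,\Gamma_{E_i^\nu}-\delta H'|_{E_i^\nu})$ is exactly the gluing stratum $(E_1\cup\cdots\cup E_{i-1})\cap E_i$, so that \cite[Theorem 1.2]{NT20} and \cite[Corollary 2.9]{Kee99} apply cleanly. This last point is what requires the perturbation in the first step, precisely because $\lfloor\Delta\rfloor$ is only assumed to contain, rather than equal, $\Supp(\phi^{-1}(s))$. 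The generic-fibre reduction and the descent statement should be routine.
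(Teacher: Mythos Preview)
Your approach to the semiampleness of $L$ is correct --- it is essentially the paper's argument unfolded. The paper is more economical: rather than re-running the proof of Proposition~\ref{prop: Kx semiample mixed char}, it constructs a boundary $\Delta' \sim_\bQ \Delta + A$ with $\lfloor\Delta'\rfloor = \Supp(\phi^{-1}(s))$ and applies Proposition~\ref{prop: Kx semiample mixed char} as a black box. The trick there is to write $A \sim_\bQ H + G$ with $H$ ample and $G \ge 0$ avoiding non-klt centers (using the hypothesis on $\mathbf{B}_+(A)$), and then absorb the horizontal part $F$ of $\lfloor\Delta\rfloor$ into the ample part via a general $H' \sim_\bQ \epsilon H + \delta F$; no dlt modification is needed. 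Your perturbation achieves the same end but with more moving parts.

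There is, however, a genuine gap in your descent argument. You assert that $D + f^*A_W$ is nef ``hence semiample,'' but nefness alone does not give semiampleness here: the only semiampleness statement available is the first part of this very proposition (equivalently, Proposition~\ref{prop: Kx semiample mixed char} after perturbation), and that applies only to divisors of the form $K_X + \Delta + A'$ with $A'$ big, $(X, \Delta + A')$ dlt, and $\mathbf{B}_+(A')$ avoiding non-klt centers. The divisor $D + f^*A_W$ is not of this form, and any naive attempt to write it as $K_X + \Delta + (\text{something})$ runs into coefficient trouble, since $D$ and $f^*A_W$ may have large coefficients and destroy the dlt condition. Your remark that descent ``should be routine'' is exactly where the argument fails.

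The paper's descent argument is more delicate. Given an $f$-trivial $\bQ$-Cartier divisor $M$, one first checks that $\mathbf{B}_+(A + M + f^*D) \subseteq \mathbf{B}_+(A)$ for $D$ sufficiently ample on $W$, then chooses $A' \sim_\bQ (1-\tfrac{1}{n})A + \tfrac{1}{n}(A + M + f^*D)$ with $(X, \Delta + A')$ dlt --- the point being that the coefficients of $A'$ stay controlled for $n$ large. By the cone theorem (Proposition~\ref{prop: cone theorem for semi-stable fourfold over Spec R}(3)), $K_X + \Delta + A' + f^*D'$ is nef for $D'$ sufficiently ample on $W$, so the first part applies and this divisor is semiample, defining a contraction $f'$. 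One then argues $f' = f$ (since $A' - A + f^*D'$ is $f$-trivial), whence $K_X + \Delta + A' \sim_{\bQ,W} 0 \sim_{\bQ,W} K_X + \Delta + A$, forcing $M \sim_{\bQ,W} 0$.
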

	
	\begin{proof}
		The first statement follows from Proposition \ref{prop: Kx semiample mixed char} by perturbing the boundary $\Delta$ via $A$.
		More explicitly, let $\lfloor \Delta \rfloor =E+F$ where $E$ is supported on $X_s$ while the support of $F$ contains no divisors in $X_s$.
		Since $\mathbf{B}_+(A)$ does not contain any non-klt centers of $(X,\Delta+A)$, we can write $A \sim_\bQ H+G$ where $H$ is ample and $G\ge0$ contains no non-klt centers of $(X,\Delta)$. Hence $(X, \Delta+A+\epsilon G)$ is still dlt for some sufficiently small $\epsilon>0$. Then there exists $\delta >0$ sufficiently small such that $\epsilon H + \delta F$ is ample. Therefore we can choose a general member  $H'\sim_{\Qq}\epsilon H+\delta F$, such that if
		$$\Delta' := \Delta - \delta F + (1-\epsilon) A + \epsilon G + H'\sim_{\Qq}\Delta+A,
		$$
		then $(X, \Delta')$ is dlt and $\lfloor \Delta' \rfloor = \Supp(\phi^{-1}(s))$. Notice that $K_X + \Delta' \sim_\bQ L$.
		Applying Proposition \ref{prop: Kx semiample mixed char} to the pair $(X, \Delta')$ we conclude that $L$ is semiample.
		
		\smallskip
		
		For the last statement, if $M$ is an $f$-numerically trivial $\Qq$-Cartier divisor on $X$, then we claim that $\mathbf{B}_+(A+M+f^*D)\subseteq\mathbf{B}_+(A)$ for some sufficiently ample divisor $D$ on $W$. Indeed, by definition $\mathbf{B}_+(A)=\mathbf{B}(A-\epsilon H)$ for any ample divisor $H$ and any $0<\epsilon\ll1$. Since $\epsilon H+M$ is $f$-ample, $\bar{H}:=\epsilon H+M+f^*D$ is ample for sufficiently ample $D$ on $W$. Then the claim follows as $\mathbf{B}_+(A+M+f^*D)\subseteq\mathbf{B}(A+M+f^*D-\bar{H})=\mathbf{B}(A-\epsilon H)$.\par
		Now we can find $A'\sim_{\Qq}(1-\frac{1}{n})A+\frac{1}{n}(A+M+f^*D)$ such that $(X,\Delta+A')$ is dlt.
		Then by Propostion \ref{prop: cone theorem for semi-stable fourfold over Spec R}(3), $K_X+\Delta+A'+f^*D'$ is nef for sufficiently ample $D'$ on $W$.
		Applying Proposition \ref{prop: Kx semiample mixed char} to $K_X+\Delta+A'+f^*D'$, we see that $K_X+\Delta+A'+f^*D'$ is semiample and hence defines a contraction $f': X \to W'$.
		Since $D'$ is sufficiently ample, $f$ factors through $f'$.
		Since $A'+f^*D'-A$ is $f$-numerically trivial, we see that $f'$ actually coincides with $f$. Thus
		$$
		K_X+\Delta+A'\sim_{\Qq,W}0\sim_{\Qq,W} K_X+\Delta+A.
		$$
		Therefore $M\sim_{\Qq,W}0$ and the statement follows.
	\end{proof}
	
	\begin{rem}
		From the proof we can see that the above proposition also holds in the relative setting, i.e. for a projective contraction $g:X\to Z$ over $\Spec R$. Indeed, if $K_X+\Delta+A$ is nef over $Z$, then by Proposition \ref{prop: cone theorem for semi-stable fourfold over Spec R}(3) $K_X+\Delta+A+g^*D$ is also nef for sufficiently ample $D$ on $Z$. Thus we reduce to the global case.
	\end{rem}
	
	Using the same strategy as in \cite[Theorem 6.3]{Bir16}, we can show the existence of flips by reducing to the case where the coefficients belong to the standard set, which is known by \cite[Proof of Theorem 1.2]{HW20}.
	
	\begin{thm}\label{thm: existence of flips for semi-stable fourfold over curve with general coef}
		Let $(X, \Delta)$ be a four-dimensional $\bQ$-factorial dlt pair projective over a DVR $R$, where $R$ has perfect residue field of characteristic $p > 5$. Let $s \in \Spec R$ be the special point and let $\phi: X \to \Spec R$ be the natural morphism. Suppose that $\Supp(\phi^{-1}(s)) \subseteq \lfloor \Delta \rfloor$. When $R$ is purely of positive characteristic, we also assume that it is a local ring of a curve $C$ defined over a perfect field, and that $(X, \Delta) := (\mathcal{X} , \Phi) \times_C \Spec R$ for a four-dimensional $\bQ$-factorial dlt pair $(\mathcal{X} , \Phi)$ which is projective over $C$.\par
		If $f: X\to Z$ is a $(K_X+\Delta)$-flipping contraction such that $\rho(X/Z)=1$, then the flip of $f$ exists.
	\end{thm}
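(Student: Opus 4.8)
The plan is to follow the strategy of \cite[Theorem 6.3]{Bir16} and reduce the construction of the flip to the case in which $\Delta$ has standard coefficients, for which one invokes \cite[Proof of Theorem 1.2]{HW20}. First I would pin down the flipping curves: since $-(K_X+\Delta)$ is $f$-ample and $f$ contracts only curves mapping to a point of $\Spec R$, the flipping ray is a $(K_X+\Delta)$-negative ray of $\overline{\mathrm{NE}}(X/\Spec R)$, so by Proposition \ref{prop: cone theorem for semi-stable fourfold over Spec R}(2) it is generated by curves contained in the components $E_i$ of $\phi^{-1}(s)$; such curves are numerically vertical (they meet $\phi^{-1}(s)$ trivially), hence every flipping curve lies in $\phi^{-1}(s)\subseteq\lfloor\Delta\rfloor$ and $f$ is a flipping contraction of ``pl type'', which is precisely the situation treated in \cite{HW20} once the coefficients are standard.

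Because $\rho(X/Z)=1$, the relative N\'eron--Severi space is one-dimensional, so the numerical class over $Z$ of a $\bQ$-Cartier divisor is detected by its intersection number with a flipping curve $C$. Keeping the coefficients of the components of $\phi^{-1}(s)$ equal to $1$, I would modify the coefficient $a_i$ of each of the remaining components of $\Delta$ to a standard value $b_i$ — lowering it when the component meets $C$ non-negatively and raising it (towards $1$) when the component meets $C$ negatively — so as to produce a $\bQ$-divisor $\Delta^\flat$ with standard coefficients for which $-(K_X+\Delta^\flat)$ is still $f$-ample, $\Supp\phi^{-1}(s)\subseteq\lfloor\Delta^\flat\rfloor$, and $(X,\Delta^\flat)$ is dlt; to control the dlt property after raising coefficients one may first pass to a $\bQ$-factorial dlt modification over $Z$, which does not affect whether the flip exists. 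Since $\rho(X/Z)=1$, we then get $K_X+\Delta\equiv_Z\mu(K_X+\Delta^\flat)$ for some $\mu>0$. Now \cite[Proof of Theorem 1.2]{HW20} produces the flip $f^\flat\colon X^\flat\to Z$ of $(X,\Delta^\flat)/Z$, obtained from the finite generation of the algebra got by restricting $K_X+\Delta^\flat$ to a component of $\lfloor\Delta^\flat\rfloor$, which is a threefold where the three-dimensional MMP is available.

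Finally I would transfer the flip back to $(X,\Delta)$. On $X^\flat$ one has $\rho(X^\flat/Z)=1$ and $K_{X^\flat}+\Delta^\flat_{X^\flat}$ is $f^\flat$-ample, so $K_{X^\flat}+\Delta_{X^\flat}$ is either $f^\flat$-ample, $f^\flat$-numerically trivial, or $f^\flat$-anti-ample; the middle case is excluded by the negativity lemma, as it would force $K_X+\Delta\equiv_Z 0$. If $K_{X^\flat}+\Delta_{X^\flat}$ is $f^\flat$-ample, then $X^\flat\to Z$ is already the flip of $(X,\Delta)/Z$; otherwise I would replace $(X,\Delta)$ by $(X^\flat,\Delta_{X^\flat})$ and iterate. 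The iteration produces a sequence of flips all of whose flipping and flipped loci lie in $\phi^{-1}(s)\subseteq\lfloor\Delta\rfloor$, so it must stop after finitely many steps by the special termination (Theorem \ref{thm: special termination char p} in positive characteristic and Theorem \ref{thm: special termination mixed} in mixed characteristic), and its last term is the desired flip. I expect the main obstacle to be exactly this reduction-and-transfer: making the discrete standard coefficient set interact correctly with the relative positivity over $Z$ while keeping the pair dlt, which is where $\rho(X/Z)=1$, the descent statement in Proposition \ref{prop: L semiample mixed char}, and the special termination theorems all enter.
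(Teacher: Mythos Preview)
Your approach diverges from the paper's in a way that introduces a real gap. The paper follows Birkar's argument \cite[Theorem 6.3]{Bir16} by induction on the number $\zeta(\Delta)$ of components of $\Delta$ with non-standard coefficient: pick one such component $S$ with coefficient $a\notin\Gamma$, pass to a log resolution $\nu\colon Y\to X$, and run two MMPs over $Z$ with strictly smaller $\zeta$. The first is a $(K_Y+S_Y+B_Y)$-MMP (the coefficient of $S_Y$ is now $1$, so $\zeta$ drops and flips exist by induction), terminating via \cite[Theorem 2.14]{HW20}; the second is a $(K_Y+aS_Y+B_Y)$-MMP with scaling of $(1-a)S_Y$, which is simultaneously a $(K_Y+B_Y)$-MMP (again $\zeta$ drops), terminating with the flip $X^+$. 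Working on the log resolution is precisely what makes the dlt condition automatic throughout.

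Your plan instead modifies all non-standard coefficients at once, directly on $X$. The problem is the step where you \emph{raise} the coefficient of a horizontal component $D_i$ with $D_i\cdot C<0$ towards $1$: this can destroy the lc (let alone dlt) property of $(X,\Delta^\flat)$, since $(X,\Delta)$ being dlt says nothing about how $D_i$ meets the non-snc locus of $X$ or the other boundary components. Your proposed fix --- pass to a dlt modification $\pi\colon Y\to X$ of $(X,\Delta^\flat)$ --- does not obviously recover the flip of $(X,\Delta)$: when $(X,\Delta^\flat)$ is not lc one has $K_Y+\pi_*^{-1}\Delta^\flat+\Exc(\pi)\neq\pi^*(K_X+\Delta^\flat)$, so the log canonical model of $(Y,\Delta_Y^\flat)$ over $Z$ need not be $\mathrm{Proj}\bigoplus_m f_*\cO_X(m(K_X+\Delta^\flat))$, and the link to the flip of $(X,\Delta)$ is lost. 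Moreover, after the modification $\rho(Y/Z)>1$, so you need a full MMP over $Z$ with its own termination input, not a single flip. Separately, your final iteration is unnecessary: once you have $K_X+\Delta\equiv_Z\mu(K_X+\Delta^\flat)$ with $\mu>0$, Proposition~\ref{prop: L semiample mixed char} upgrades this to $\bQ$-linear equivalence over $Z$, which persists under any small map over $Z$, so the flip of $(X,\Delta^\flat)$ is automatically the flip of $(X,\Delta)$ and the anti-ample case you worry about cannot occur.
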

	
	\begin{proof}
		Let $\zeta(\Delta)$ be the number of components of $\Delta$ whose coefficient is not contained in the standard set $\Gamma:=\{1\}\cup\{1-\frac{1}{n}~|~n>0\}$. We prove the result by induction on $\zeta(\Delta)$.\par
		If $\zeta(\Delta)=0$, then this follows by the proof of \cite[Theorem 1.2]{HW20}.
		Therefore we may assume that $\zeta(\Delta)>0$ and write $\Delta=aS+B$ where $a\notin\Gamma$. Note that $S$ is not contained in $\phi^{-1}(s)$. Let $\nu: Y\to X$ be a log resolution of $(X,S+B)$ such that $\nu$ is an isomorphism at the generic points of strata of $\lf\Delta\rf$ and let $B_Y=\nu^{-1}_*B+\Exc(\nu)$, $S_Y=\nu^{-1}_*S$.
		Since $\zeta(S_Y+B_Y)<\zeta(\Delta)$, we may run a $(K_Y+S_Y+B_Y)$-MMP over $Z$.
		This is because the cone theorem is established in Proposition \ref{prop: cone theorem for semi-stable fourfold over Spec R} and the contraction theorem is also established in Proposition \ref{prop: L semiample mixed char}. This MMP terminates by \cite[Theorem 2.14]{HW20} and we get a minimal model $(W,S_W+B_W)$, where $S_W,B_W$ are the birational transforms of $S_Y,B_Y$ respectively.
		Then we run a $(K_Y+aS_Y+B_Y)$-MMP with scaling of $(1-a)S_Y$ over $Z$.
		Note that this is also a $(K_Y+B_Y)$-MMP and $\zeta(B_Y)<\zeta(\Delta)$.
		Hence we can run such an MMP and it terminates by \cite[Theorem 2.14]{HW20}.
		Thus we obtain a minimal model $(X^+,aS^++B^+)$. Then it is easy to see that $X^+\to Z$ is the desired flip (cf. \cite[Proof of Theorem 1.1]{HW19}).
	\end{proof}

	\begin{proof}[Proof of Theorem \ref{thm: main thm mixed}]
		Suppose that we already have a sequence of steps of $(K_X+\Delta)$-MMP
		$$X = X_0 \dashrightarrow X_1 \dashrightarrow \cdots \dashrightarrow X_k,$$
		and we want to continue this process.
		Denote the special fiber of $X_k$ by $X_{k,s}$, and the birational transforms of $\Delta$ and $A$ on $X_k$ by $\Delta_k$ and $A_k$ respectively.
		Replace $A$ by a general member in $|A|_\bQ$, we may assume that $(X, \Delta + A)$ is dlt and that $\mathbf{B}_+(A)$ does not contain any non-klt centers of $(X,\Delta+A)$. Let $\lambda_i$ be the scaling number in each step, i.e. $\lambda_i := \inf\{\lambda \in \bR \mid K_{X_i} + \Delta_i + \lambda A_i\ \mathrm{is\ nef}\}$. If $K_{X_k} + \Delta_k$ is nef, then we already obtain a minimal model.
		Otherwise, by Proposition \ref{prop: cone theorem for semi-stable fourfold over Spec R} there exists a $(K_{X_k} + \Delta_k)$-negative extremal ray spanned by a curve $\Sigma \subseteq X_{k,s}$ such that $(K_{X_k} + \Delta_k + \lambda_k A_k) \cdot \Sigma = 0$.
		Let $L_k$ be a nef $\bQ$-divisor such that $L_k^\perp = \bR[\Sigma]$.
		Then possibly replacing $L_k$ by a sufficiently large multiple, $G_k := L_k - (K_{X_k} + \Delta_k)$ has positive intersection with every one-cycle in $\overline{\mathrm{NE}}(X) \setminus \{0\}$, and hence it is ample by Kleiman's ampleness criterion (\cite[Theorem 1.18]{KM98}).
		Therefore by Proposition \ref{prop: L semiample mixed char} $L$ is semiample and defines a contraction $f: X_k \to Z$. If $\dim Z < \dim X_k$, then we stop (and obtain a Mori fiber space). If $f: X_k \to Z$ is a divisorial contraction, then we set $X_{k+1} := Z$ and continue. If $f: X_k \to Z$ is a flipping contraction, then the flip $f^+: X^+_k \to Z$ exists by Theorem \ref{thm: existence of flips for semi-stable fourfold over curve with general coef}, and hence we can set $X_{k+1} := X^+_k$ and continue the MMP.
		
		\medskip
		
		Thus we can run a $(K_X + \Delta)$-MMP with scaling of $A$.
		Since the special fiber is contained in the non-klt locus, there is no infinite sequence of flips by Theorem \ref{thm: special termination mixed}.
		
		If $K_X+\Delta$ is pseudo-effective, then $L_k$ is always big in each step, which implies that the case $\dim Z < \dim X_k$ cannot occur. Hence we can keep on running this MMP until $K_{X_k} + \Delta_k$ is nef, which means that we obtain a minimal model. This proves (1).
		
		If $K_X +\Delta$ is not pseudo-effective, then $K_{X_k} + \Delta_k$ can never be nef. Hence this MMP must terminate with a Mori fiber space $X_k \to Z$, which proves (2).
	\end{proof}
	
	\begin{proof}[Proof of Theorem \ref{thm: main thm char p}]
		Similar to the proof of Theorem \ref{thm: main thm mixed}. The cone theorem and the contraction theorem are established in Proposition \ref{prop: cone theorem for semi-stable fourfold over Spec R} and Proposition \ref{prop: L semiample mixed char}. The existence of flips is shown in Theorem \ref{thm: existence of flips for semi-stable fourfold over curve with general coef} and the termination of flips is ensured by Theorem \ref{thm: special termination char p}.
	\end{proof}
	
	\begin{rem}
		In Theorem \ref{thm: main thm char p}, if $K_X+\Delta$ is not pseudo-effective, we can still run a $(K_X+\Delta)$-MMP with scaling of an ample divisor, and it will terminate with a model $(X_k, \Delta_k)$ where there exists a $(K_{X_k} + \Delta_k)$-negative curve $\Sigma \subseteq X_k$ such that the divisor $L_k$ satisfying $L_k \cdot \Sigma = 0$ is nef but not big. In this case we expect that $L_k$ defines a Mori fiber space, but the semiampleness of $L_k$ is not known since so far we do not have the corresponding base free point theorem for threefolds over imperfect fields.
	\end{rem}
	
	\section{Relative MMP over Dedekind schemes}\label{section: global MMP}
	As applications of Theorem \ref{thm: main thm mixed} and Theorem \ref{thm: main thm char p}, in this section we prove Corollary \ref{cor: log terminal model in a neighborhood of s}, Theorem \ref{thm: MMP of strictly semi-stable fourfolds} and Theorem \ref{thm: MMP for strictly semi-stable fourfolds over C in char>0}, where we replace the DVR $R$ by the Dedekind scheme $V$ in the setting. The key to extend the MMP from a local setting to a global setting is to use the semiampleness of some divisor to extend the contraction to an open neighborhood of a given point.
	
	\begin{proof}[Proof of Corollary \ref{cor: log terminal model in a neighborhood of s}]
		We will run a $(K_X+\Delta)$-MMP with scaling of an ample divisor $A$ over a neighborhood of $s\in V$. We will repeatedly replace $V$ by some open neighborhood $s\in U\subseteq V$ and $X$ by $X\times_{V}U$. So we may assume that the residue fields of $V$ do not have characteristic 2, 3 or 5. Let $R := \cO_{V, s}$ and $\mathcal{X}_s:=X\times_V\Spec R$. Let $\eta$ be the generic point of $R$.
		
		Let Cartier divisors $D_1,\cdots,D_m$ be generators of $\mathrm{N}^1(X/V)$. Then by considering the closure of a divisor, we see that the restriction maps $$\mathrm{N}^1(X/V)\to\mathrm{N}^1(\Xx_s/\Spec R),~\mathrm{N}^1(X/V)\to\mathrm{N}^1(X_U/U)$$
		are surjective since $X$ is $\Qq$-factorial. In particular, an ample divisor on $\Xx_s$ can be lifted to an ample divisor on $X_U$.
		
		If $K_X+\Delta$ is nef over $s$, then it is nef over $\eta$. This is an easy consequence of the well-known fact that ampleness is an open condition. Since $(X_\eta,\Delta_\eta)$ is a three-dimensional dlt pair defined over a field of characteristic 0, $K_{X_\eta}+\Delta_\eta$ is semiample by the abundance theorem. By Lemma \ref{lem: semiample is an open condition}, $K_X+\Delta$ is semiample, thus nef, over a neighborhood of $\eta$. Since $V$ is an excellent Dedekind scheme, we can then find an open neighborhood $s\in U\subseteq V$ such that $K_X+\Delta$ is nef over $U$. Hence $(X, \Delta)$ itself is a log terminal model over $U$, and we are done.
		
		If $K_X+\Delta$ is not nef over $s$, then there exists a $(K_X+\Delta)|_{\mathcal{X}_s}$-negative extremal ray on $\Xx_s$ which is spanned by a curve $\Sigma \subseteq \Xx_s$. As in the proof of Theorem \ref{thm: main thm mixed}, we can find a $\Qq$-divisor $L$ on $X$ such that $L|_{\Xx_s}$ is nef with $(L|_{\Xx_s})^{\bot}=\Rr[\Sigma]$ and $G:=L-(K_X+\Delta)$ is ample over $s$.
		Possibly shrinking $V$ we may assume that $G$ is ample over $V$.
		By Proposition \ref{prop: L semiample mixed char} and Lemma \ref{lem: semiample is an open condition}, $L$ is semiample over $V$ and defines a contraction $f:X\to W$. Without loss of generality, we may assume that $D_1\cdot\Sigma>0$. Then by Proposition \ref{prop: L semiample mixed char} again there exist $a_2,\cdots,a_m\in\Qq$ such that $$D_i-a_iD_1\sim_{\Qq,\mathcal{W}_s}0$$ 
		for $2\le i\le m$, where $\mathcal{W}_s=W\times_{V}\Spec R$.
		Possibly shrinking $V$ we may assume that $D_i-a_iD_1\sim_{\Qq,W}0$. Thus the relative Picard number $\rho(X/W)=1$. It remains to prove that any $f$-numerically trivial $\Qq$-Cartier divisor $D$ descends to $W$.
		By Lemma \ref{lem: semi-stable in a neighborhood in mixed char}, possibly shrinking $V$ we may assume that $(X,\Delta+X_t)$ is dlt for any $t\in V\setminus\{s\}$. Thus we can apply Proposition \ref{prop: L semiample mixed char} and Lemma \ref{lem: semiample is an open condition} to $\Xx_t$ for any $t\in V$. In particular, $L$ and $L+D$ define the same contraction over a neighborhood of $t$. Hence $D$ descends to $W$ over a neighborhood of $t$. Therefore $D$ descends to $W$ over $V$.
		
		\smallskip
		
		If $f$ is not birational, then $f:X\to W$ is a Mori fiber space. Thus we may assume that $f$ is a birational contraction.
		
		If $f$ is a divisorial contraction, then we set $X_1=W$ and continue the MMP.
		If $f$ is a flipping contraction, then the flip $f^+: X^+\to W$ exists by the following claim, and we set $X_1=X^+$ and continue the MMP.
		\begin{claim}\label{claim: flip can extend to an open neighborhood}
			If $f$ is a flipping contraction, then $f_{t}:\Xx_t\to\mathcal{W}_t$ (resp. $f_U:X_U\to W_U$) is either a flipping contraction or an isomorphism for any $t\in V$(resp. any open subset $U$ of $V$). Furthermore, the flip $f^+: X^+\to W$ exists.
		\end{claim}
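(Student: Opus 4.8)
The plan is to first read off the stated dichotomy for $f_t$ and $f_U$ from the size of the exceptional locus, and then to produce the flip of $f$ by constructing it over $\Spec \cO_{V,v}$ for every point $v \in V$ and gluing the pieces together.

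For the dichotomy I would argue as follows. Since $f$ is a flipping contraction, $\Exc(f)$ is a nonempty closed subset of codimension $\ge 2$ in $X$ (it contains $\Sigma$). Base changing along the flat localization morphism $\Xx_t \to X$ shows that $\Exc(f_t)$ has codimension $\ge 2$ in $\Xx_t$ or is empty, and likewise $\Exc(f_U)$ in $X_U$. The divisor $-(K_X+\Delta)$ is $f$-ample, hence its restriction is $f_t$-ample (resp.\ $f_U$-ample); moreover $\rho(\Xx_t/\mathcal{W}_t) \le \rho(X/W) = 1$ (resp.\ $\rho(X_U/W_U) \le 1$), because $N^1(X/W)$ is one-dimensional and the restriction maps on numerical classes of divisors are surjective, as in the argument already used above. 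Therefore, when the exceptional locus is empty, $f_t$ (resp.\ $f_U$) is a proper birational morphism onto a normal variety with empty exceptional locus, hence an isomorphism by Zariski's main theorem; and when it is nonempty, $f_t$ (resp.\ $f_U$) is a $(K_{\Xx_t}+\Delta_t)$-flipping (resp.\ $(K_{X_U}+\Delta_U)$-flipping) contraction of relative Picard number one. This proves the first assertion.

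For the existence of the flip I would build it locally over the base $V$ and glue using the uniqueness of flips. Over $\Spec \cO_{V,s}$ the flip of $f_s : \Xx_s \to \mathcal{W}_s$ exists by Theorem \ref{thm: existence of flips for semi-stable fourfold over curve with general coef}. For a closed point $t \neq s$ whose residue characteristic is $p$ (necessarily $p>5$ after the shrinking of $V$ already performed), Lemma \ref{lem: semi-stable in a neighborhood in mixed char} gives that $(\Xx_t, \Delta_t + X_t)$ is a four-dimensional $\bQ$-factorial dlt pair projective over the DVR $\cO_{V,t}$ with $\Supp(\phi_t^{-1}(t)) = X_t \subseteq \lfloor \Delta_t + X_t \rfloor$; since $X_t$ is principal over $\Spec \cO_{V,t}$ we have $X_t \sim_{\QQ, \mathcal{W}_t} 0$, so $f_t$ is also a $(K_{\Xx_t}+\Delta_t + X_t)$-flipping contraction of relative Picard number one and its flip exists, again by Theorem \ref{thm: existence of flips for semi-stable fourfold over curve with general coef}. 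For the generic point $\eta$ (and for any closed point of residue characteristic zero, should one wish to allow this) the relevant fiber is a dlt pair in equal characteristic zero, so the flip exists by \cite{BCHM10}. Now, since $\Spec \cO_{V,v} = \varprojlim_{v \in U} U$, standard spreading-out (cf.\ \cite{Sta}) extends each of these flips, over a small enough open neighborhood $U \ni v$, to a flip of $f_U : X_U \to W_U$; as $V$ is quasi-compact, finitely many such $U$ cover $V$, and on overlaps the restrictions of these flips agree by uniqueness of flips, so they glue to a flip $f^+ : X^+ \to W$ over $V$. (If one is content to run the MMP only over a neighborhood of $s$, as in the statement of Corollary \ref{cor: log terminal model in a neighborhood of s}, it suffices to spread out the flip over $\Spec \cO_{V,s}$ and shrink $V$ accordingly.)

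I expect the main obstacle to be the passage from the flip over the DVR $\cO_{V,s}$ to a flip over $W$: one needs that flip to be of finite type so that it descends to an open neighborhood, one must check that after shrinking the spread-out model remains normal, $\bQ$-factorial, small and relatively ample for $K+\Delta$ (so that it really is the flip of $f_U$), and one must invoke the uniqueness of flips to verify the cocycle condition for the gluing. The remaining ingredients---the codimension bookkeeping for $\Exc$, the principality of $X_t$ over $\Spec \cO_{V,t}$, and the applicability of Theorem \ref{thm: existence of flips for semi-stable fourfold over curve with general coef} and of \cite{BCHM10}---should be routine.
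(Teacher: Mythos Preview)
Your proposal is correct and follows essentially the same route as the paper: both reduce the dichotomy to $\rho(\Xx_t/\mathcal{W}_t)\le 1$, and both construct the global flip by producing it over each $\Spec \cO_{V,t}$ via Theorem~\ref{thm: existence of flips for semi-stable fourfold over curve with general coef} (using, for $t\neq s$, the dlt-ness of $(X,\Delta+X_t)$ already secured via Lemma~\ref{lem: semi-stable in a neighborhood in mixed char}) and then spreading out. The paper phrases the spreading-out as taking the closure of the local flip and checking smallness and relative ampleness over a neighborhood, and the gluing as finite generation of the relative canonical $\Oo_W$-algebra, but this is only a cosmetic difference from your limit argument and uniqueness-of-flips gluing.
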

		\begin{proof}
			The restriction map $\mathrm{N}^1(X/W)\to \mathrm{N}^1(\Xx_t/\mathcal{W}_t)$ is surjective, and so $\rho(\Xx_t/\mathcal{W}_t)\le1$.  If $\rho(\Xx_t/\mathcal{W}_t)=0$ then $f_t$ is an isomorphism. If $\rho(\Xx_t/\mathcal{W}_t)=1$ then $f_t$ is a flipping contraction. The same argument holds for $f_U$.
			
			Since the construction of flips is local, it suffices to show that for any $t \in V$ there exists a neighborhood $U_t \ni t$ such that $\mathrm{Proj}_W \bigoplus_{m\ge0} f_\ast \cO_X(m(K_X + \Delta))$ is finitely generated over $U_t$.
			If $f_t$ is an isomorphism, then it is obvious. If $f_t$ is a flipping contraction, then by Theorem \ref{thm: existence of flips for semi-stable fourfold over curve with general coef} the flip $(f_t)^+$ exists.
			Taking the closure of $(f_t)^+$ we get a projective morphism $X'\to W$. Then there is an open neighborhood $U_t$ of $t$ such that $X'_{U_t}\to W_{U_t}$ is small and the birational transform of $K_X+\Delta$ is ample over $W_{U_t}$. Therefore $(f_{U_t})^+:X'_{U_t}\to W_{U_t}$ is the flip of $f_{U_t}$, and hence the desired finite generation follows.
		\end{proof}

		Finally this MMP terminates by the same reason as in the proof of Theorem \ref{thm: main thm mixed}. In particular, we get a log terminal model over a neighborhood of $s$ if $(K_X+\Delta)|_{\Xx_s}$ is pseudo-effective, or a Mori fiber space over a neighborhood of $s$ if $(K_X+\Delta)|_{\Xx_s}$ is not pseudo-effective.
	\end{proof}
	
	Furthermore, if $(X, \Delta) \to V$ is a dlt morphism, then we can run an MMP not only over an open neighborhood but globally over the base.
	
	\begin{prop}\label{prop: MMP of pair such that (X,X_s) is dlt}
		Let $V$ be an excellent Dedekind scheme whose residue fields are perfect and do not have characteristic 2, 3 or 5. Let $(X,\Delta)$ be a four-dimensional $\Qq$-factorial pair projective over $V$ such that $X_\bQ \neq \emptyset$. Assume that $(X,\Delta+X_t)$ is dlt for any closed point $t\in V$, where $X_t$ is the fiber of the natural morphism $\phi:X\to V$. Then we can run a $(K_X+\Delta)$-MMP with scaling of an ample divisor over $V$, and
		\begin{enumerate}
			\item if $K_X+\Delta$ is pseudo-effective over $V$, then this MMP terminates with a minimal model;
			\item if $K_X+\Delta$ is not pseudo-effective over $V$, then this MMP terminates with a Mori fiber space.
		\end{enumerate}
	\end{prop}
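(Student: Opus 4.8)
The plan is to follow the proof of Corollary \ref{cor: log terminal model in a neighborhood of s} verbatim in structure, the only difference being that we run the $(K_X+\Delta)$-MMP genuinely over $V$ rather than over a shrinking neighbourhood of a fixed point. The hypothesis that $(X,\Delta+X_t)$ is dlt for \emph{every} closed point $t$ is exactly what removes the need to shrink: it lets us invoke Proposition \ref{prop: L semiample mixed char} (and Proposition \ref{prop: cone theorem for semi-stable fourfold over Spec R}) at \emph{every} closed fibre and then globalise via Lemma \ref{lem: semiample is an open condition}, with Witaszek's criterion (Theorem \ref{thm: wit21 1.2}) ensuring that ``semiample on every fibre'' upgrades to ``semiample over $V$''.

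First I would replace $A$ by a general member of $|A|_\bQ$, so that $(X,\Delta+A)$ and every $(X,\Delta+A+X_t)$ is dlt and $\mathbf{B}_+(A)$ contains no non-klt centres, and record the scaling numbers $\lambda_i$. The inductive step is: given $X_i\to V$ with $K_{X_i}+\Delta_i+\lambda_iA_i$ nef over $V$ but $K_{X_i}+\Delta_i$ not nef over $V$, produce the next step over $V$. Every $(K_{X_i}+\Delta_i)$-negative curve $C$ lies in a fibre $X_{i,t}$, and since $X_{i,t}\cdot C=0$ such a step coincides with a step of the $(K_{X_i}+\Delta_i+X_{i,t})$-MMP; base-changing to $\Spec\Oo_{V,t}$ (of residue characteristic $0$ or $>5$) and applying the cone and contraction theorems --- Propositions \ref{prop: cone theorem for semi-stable fourfold over Spec R} and \ref{prop: L semiample mixed char} in positive residue characteristic, the classical statements in residue characteristic $0$ and over the generic point --- produces a $(K_{X_i}+\Delta_i)$-negative extremal ray $R$ together with a supporting $\bQ$-divisor on that fibre. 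Using the $\bQ$-factoriality of $X_i$ and the surjectivity of the restriction maps on $\mathrm{N}^1$ (as in the proof of Corollary \ref{cor: log terminal model in a neighborhood of s}), I would promote this to a $\bQ$-divisor $L$ on $X_i$ that is nef over $V$ with $L^{\perp}\cap\overline{\mathrm{NE}}(X_i/V)=R$ and $L-(K_{X_i}+\Delta_i)$ ample over $V$; Proposition \ref{prop: L semiample mixed char} and Lemma \ref{lem: semiample is an open condition} then show $L$ is semiample over $V$ and defines a contraction $f\colon X_i\to Z$ over $V$. If $\dim Z<\dim X_i$ we stop with a Mori fibre space; if $f$ is divisorial, set $X_{i+1}:=Z$; if $f$ is a flipping contraction, the flip exists over $V$ by the spreading-out argument of Claim \ref{claim: flip can extend to an open neighborhood}, using Theorem \ref{thm: existence of flips for semi-stable fourfold over curve with general coef} over each local ring.

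For termination I would restrict the MMP to the generic fibre: the steps nontrivial there form a $(K_{X_\eta}+\Delta_\eta)$-MMP with scaling of $A_\eta$ on a three-dimensional $\bQ$-factorial dlt pair in characteristic $0$, hence a finite sequence. If $K_X+\Delta$ is pseudo-effective over $V$, the generic fibre then reaches its minimal model, $(K_X+\Delta)|_{X_\eta}$ is semiample by abundance, so by Lemma \ref{lem: semiample is an open condition} $K_{X_i}+\Delta_i$ becomes nef over a fixed open $U\ni\eta$; the remaining steps all take place over the finitely many closed points of $V\setminus U$, and over each such $\Spec\Oo_{V,t}$ the induced sequence is a $(K_X+\Delta+X_t)$-MMP with $X_t\subseteq\lfloor\Delta+X_t\rfloor$, which terminates by Theorem \ref{thm: special termination mixed} together with the finiteness of divisorial contractions; this gives (1). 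If $K_X+\Delta$ is not pseudo-effective over $V$, it is not pseudo-effective over $\eta$, so the generic-fibre MMP ends with a Mori fibre space; from this I would deduce that the relative nef threshold $\lambda_i$ stays bounded below by a positive constant, which confines the remaining (now all vertical) steps to finitely many closed fibres and forces termination with a Mori fibre space over $V$ --- which cannot be a minimal model since $K_{X_i}+\Delta_i$ stays non-nef over $V$ --- giving (2).

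The step I expect to be the main obstacle is the construction, for a ray visible only over a single fibre, of a \emph{globally} nef supporting divisor $L$ on $X_i$ with $L-(K_{X_i}+\Delta_i)$ ample over all of $V$ --- equivalently, the globalisation of the cone and contraction theorems over $V$ without any shrinking. This is precisely where the hypothesis that $(X,\Delta+X_t)$ is dlt for every $t$ does the work: it permits running the argument of Proposition \ref{prop: L semiample mixed char} at every closed fibre and gluing the resulting semiample models through Lemma \ref{lem: semiample is an open condition}, with Theorem \ref{thm: wit21 1.2} as the engine.
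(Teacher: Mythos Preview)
Your overall outline follows the same architecture as the paper, and the termination argument in case~(1) is close in spirit to the paper's Claim~\ref{claim: MMP will terminate}. But the step you yourself flag as the main obstacle is a genuine gap, and your proposed resolution does not close it.

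Concretely: you produce a supporting nef $\bQ$-divisor $L_t$ for an extremal ray $R$ over a single local ring $\Spec\Oo_{V,t}$, and then claim that $\bQ$-factoriality and surjectivity of the restriction $\mathrm{N}^1(X/V)\to\mathrm{N}^1(\mathcal X_t/\Spec\Oo_{V,t})$ promote it to an $L$ on $X$ which is nef over \emph{all} of $V$, with $L-(K_X+\Delta)$ ample over $V$. Neither conclusion follows. Surjectivity only lets you lift the numerical class; it gives no control over the behaviour of the lift at other closed fibres, where there may be further $(K_X+\Delta)$-negative, $(K_X+\Delta+\lambda_iA)$-trivial curves numerically independent of $R$. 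Your appeal to Proposition~\ref{prop: L semiample mixed char} at every fibre does not help here: that proposition converts nefness into semiampleness fibre by fibre, but it cannot manufacture global nefness from nefness at a single $t$. Likewise, the ``ample over $V$'' claim for $L-(K_X+\Delta)$ is exactly the step where Corollary~\ref{cor: log terminal model in a neighborhood of s} had to shrink $V$; the global dlt hypothesis on $(X,\Delta+X_t)$ does not substitute for that.

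The paper avoids this problem by never attempting to contract a single fibrewise ray directly. Instead it first contracts the entire face $(K_X+\Delta+cA)^\perp$, where $c$ is the global nef threshold (rational by Claim~\ref{claim: nef threshold is rational}); this divisor \emph{is} nef over $V$ by definition of $c$, so Proposition~\ref{prop: L semiample mixed char} plus Lemma~\ref{lem: semiample is an open condition} yield a global contraction $X\to W$. Only then, working relatively over $W$, does the paper extract an honest extremal contraction (Lemma~\ref{lem: contractions and flips exist}), by a case analysis on whether $K_X+\Delta$ is nef over $\eta$ and, when it is not, by perturbing with a carefully chosen auxiliary big divisor $G$ whose generic-fibre behaviour is controlled. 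A second, related point: your termination argument tacitly assumes that only finitely many ``vertical'' steps occur before the generic-fibre MMP advances. The paper secures this via its special construction (Claim~\ref{claim: K_X+Delta+c/2A MMP} and Lemma~\ref{lem: MMP iso on open subset terminates}), which forces the scaling number to strictly decrease after finitely many steps; without an analogous mechanism your sequence could in principle stall on infinitely many vertical flips spread over infinitely many closed points.
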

	\begin{proof}
		We shall construct a special $(K_X+\Delta)$-MMP with scaling such that the scaling number will decrease after finitely many steps, which is important for us to deduce termination of the MMP.
		
		\medskip
		
		The assumptions in the proposition are preserved under the $(K_X+\Delta)$-MMP over $V$. So it suffices to prove that we can run such an MMP and it terminates. We will use the same notation as in the proof of Corollary \ref{cor: log terminal model in a neighborhood of s}.

		Let $A$ be a $\Qq$-divisor such that for any closed point $t\in V$, there exists $A_t\sim_{\Qq,V}A$ such that 
		\begin{itemize}
			\item $K_X+\Delta+A$ is nef over $V$,
			\item $(X,\Delta+X_t+A_t)$ is dlt, and
			\item $\mathbf{B}_+(A/V)$ does not contain any non-klt centers of $(X,\Delta+X_t+A_t)$.
		\end{itemize}
		To start with, we can choose $A$ to be a small multiple of a sufficiently ample divisor over $V$.
		
		\begin{claim}\label{claim: nef threshold is rational}
			$c:=\inf\{0\le b\in\Rr\mid K_X+\Delta+bA\text{ is nef over $V$}\}$ is a rational number. If $c>0$, then there exists a $(K_X+\Delta)$-negative curve $\Sigma\subseteq X$ over $V$ such that $(K_X+\Delta+cA)\cdot\Sigma=0$.
		\end{claim}
		\begin{proof}
			Let $a:=\inf\{0\le b\in\Rr\mid K_X+\Delta+bA\text{ is nef over } \eta\}$. If $a=c$ then we are done by the cone theorem on $X_\eta$. Otherwise, by the cone theorem and the base point free theorem on $X_\eta$, we see that $a$ is a rational number and $K_X+\Delta+aA$ is semiample over $\eta$. Lemma \ref{lem: semiample is an open condition} implies that $K_X+\Delta+aA$ is actually semiample over an open neighborhood of $\eta$. Thus the $(K_X+\Delta+aA)$-negative curves are supported on only finitely many closed fibers. Notice that any $(K_X+\Delta+aA)$-negative extremal ray is also $(K_X+\Delta)$-negative since $K_X+\Delta+A$ is nef. Then the claim follows by considering adjunction and the cone theorem on each component of those closed fibers.  
		\end{proof}
		
		First we show that we can run a $(K_X+\Delta)$-MMP with scaling of $A$.
		
		\begin{lem}\label{lem: contractions and flips exist}
			Under the same assumptions as in Proposition \ref{prop: MMP of pair such that (X,X_s) is dlt}, let $A$ be a divisor as above. Suppose $c:=\inf\{0\le b\in\Rr\mid K_X+\Delta+bA\text{ is nef over $V$}\}>0$. Then there exist a $(K_X+\Delta)$-negative extremal curve $\Sigma$ over $V$ such that $(K_X+\Delta+cA)\cdot\Sigma=0$, and a contraction $g: X\to Z$ of $\Sigma$ such that $\rho(X/Z)=1$.\par
			Furthermore, if $g$ is a flipping contraction, then the flip $g^+: X^+\to Z$ exists.
		\end{lem}
		
		\begin{proof}
			Now $K_X+\Delta+cA$ is semiample over $V$ by Proposition \ref{prop: L semiample mixed char} and Lemma \ref{lem: semiample is an open condition}. Let $f:X\to W$ be the contraction defined by $K_X+\Delta+cA$.
			By Claim \ref{claim: nef threshold is rational}, $f$ is not an isomorphism.
			
			If $K_X+\Delta$ is nef over $\eta$, then it is semiample and hence nef over an open neighborhood of $\eta$ by the abundance in characteristic 0 and Lemma \ref{lem: semiample is an open condition}.
			Therefore the $(K_X+\Delta)$-negative curves are supported on finitely many closed fibers $X_{t_i}$.
			Considering adjunction and the cone theorem on each component of $X_{t_i}$, we can deduce that there are finitely many curves $C_1, \cdots, C_m$, such that
			$$
			\overline{\mathrm{NE}}(X/W) = \overline{\mathrm{NE}}(X/W)_{K_X+\Delta \ge 0} + \sum_{j=1}^m \bR_{\ge 0}[C_j].
			$$
			Then there exists an ample divisor $H$ such that $(K_X + \Delta +H)^\perp = \bR[C_j]$ for some $j$. Hence by Proposition \ref{prop: L semiample mixed char} and Lemma \ref{lem: semiample is an open condition}, $K_X + \Delta +H$ is semiample over $V$ and defines a desired contraction.
			
			Next assume that $K_X+\Delta$ is not nef over $\eta$.
			Then there is an ample divisor $G_\eta$ on $X_\eta$ such that $K_{X_\eta} + \Delta_\eta + G_\eta$ is nef and $(K_{X_\eta} + \Delta_\eta + G_\eta)^\perp = \bR[\Sigma]$ for some $(K_{X_\eta} + \Delta_\eta)$-negative curve $\Sigma \subseteq X_\eta$.
			Let $G = \overline{G}_\eta$. Note that $G$ is ample over a neighborhood of $\eta$ but may behave badly outside.
			Let $G' := \epsilon G + (1-\epsilon)cA$ for some sufficiently small $\epsilon \in \bQ$, such that for any $t\in V$ there exists $G'_t \sim_{\bQ, V} G'$ satisfying
			\begin{itemize}
				\item $(X,\Delta+X_t+G'_t)$ is dlt, and
				\item $\mathbf{B}_+(G'/V)$ does not contain any non-klt centers of $(X,\Delta+X_t+G'_t)$.
			\end{itemize}
			Since $L' := K_X+\Delta +G'$ is nef over $\eta$, it is semiample and hence nef over an open neighborhood of $\eta$ by the base point free theorem in characteristic 0 and Lemma \ref{lem: semiample is an open condition}.
			If $L'$ is not nef over $W$, then the $L'$-negative curves are supported on finitely many closed fibers. Hence we can argue as above and deduce the conclusion.
			If $L'$ is nef over $W$, then by Proposition \ref{prop: L semiample mixed char} and Lemma \ref{lem: semiample is an open condition} it is semiample over $W$ and hence defines a contraction $f': X \to W'$ over $W$.
			By the choice of $G_\eta$, $\rho(X_\eta/W'_\eta) = 1$.
			If $\rho(X/W') = 1$, then $f'$ is the desired contraction.
			Otherwise, by the proof of Corollary \ref{cor: log terminal model in a neighborhood of s}, $\rho(X_U/W'_U) = 1$ for some open subset $U \subseteq V$.
			Let $\Sigma_1$ be a curve generating $\mathrm{N}_1(X_U/W'_U)$, and let $\Sigma_2 \in \mathrm{N}_1(X/W')$ be a curve which is numerically linearly independent with $\Sigma_1$.
			Then we can find a divisor $H'$ such that $H' \cdot \Sigma_1 = 0$ while $H' \cdot \Sigma_2 < 0$.
			Consider $L'' := K_X + \Delta + G' + \delta H'$ for $\delta \in \bQ$ sufficiently small.
			Then $L''$ is nef over $\eta$ but not nef over $V$.
			Therefore the $L''$-negative curves are supported on finitely many closed fibers. Hence we can argue as above and deduce the conclusion.

			If $g$ is a flipping contraction, then we can apply Claim \ref{claim: flip can extend to an open neighborhood} to conclude the existence of the flip.
		\end{proof}
		
		Then we show the termination of the MMP in a special case.
		
		\begin{lem}\label{lem: MMP iso on open subset terminates}
			Under the same assumptions as in Proposition \ref{prop: MMP of pair such that (X,X_s) is dlt}, if
			$$
			X=X_0\dashrightarrow X_1\dashrightarrow\cdots\dashrightarrow X_i\dashrightarrow\cdots
			$$
			is a sequence of steps of $(K_X+\Delta)$-MMP with scaling of an ample divisor over $V$ such that it is an isomorphism on $X_U$ for some open subset $U \subseteq V$, then this MMP terminates.
		\end{lem}
		
		\begin{proof}
			First we claim that the restriction of the above MMP to $\mathcal{X}_t$ is again an MMP, where $t \in V$ is a closed point. Consider one step of the above MMP $X_i \dashrightarrow X_{i+1}$, and let $f_i: X_i \to Z_i$ be the corresponding contraction.
			Let $(\mathcal{X}_i)_t := X_i\times_V\Spec \cO_{V,t}$ and $(\mathcal{Z}_i)_t := Z_i\times_V\Spec \cO_{V,t}$.
			If $f_i$ is a divisorial contraction, then $f_i|_{(\mathcal{X}_i)_t}$ is either a divisorial contraction or an isomorphism, since $\rho((\mathcal{X}_i)_t / (\mathcal{Z}_i)_t) \le \rho(X_i/Z_i) = 1$. If $f_i$ is a flipping contraction, then by Claim \ref{claim: flip can extend to an open neighborhood} $f_i|_{(\mathcal{X}_i)_t}$ is either a flipping contraction or an isomorphism, and in the former case the flip exists and is the same as the restriction of the flip. Therefore we can restrict the above MMP and obtain an MMP of $\mathcal{X}_t$.
			
			Let $t_1, \cdots, t_l$ the closed points which are not in $U$. By Theorem \ref{thm: main thm mixed}, for each $t_i$ there exists an integer $N_i$ such that $\mathcal{X}_{t_i, j} \dashrightarrow \mathcal{X}_{t_i, j+1}$ are isomorphisms for $j \ge N_i$. In particular, $X_j \dashrightarrow X_{j+1}$ are isomorphisms over $U \cup \{t_i\}$ for $j \ge N_i$. Therefore, $X_j \dashrightarrow X_{j+1}$ are isomorphisms for $j \ge \max\{N_1, \cdots, N_l\}$. Hence the MMP in the lemma terminates.
		\end{proof}
		
		Now we start to construct the special $(K_X+\Delta)$-MMP with scaling of $A$ as we mentioned in the beginning of the proof.
		
		Note that $K_X+\Delta+cA$ is nef over $V$ since $c$ is the nef threshold. If $c=0$, then we already get a log terminal model. If $c>0$, then by Proposition \ref{prop: L semiample mixed char} and Lemma \ref{lem: semiample is an open condition}, $K_X+\Delta+cA$ is actually semiample over $V$ and defines a contraction $f:X\to W$.
		\begin{claim}\label{claim: K_X+Delta+c/2A MMP}
			We can run a $(K_X+\Delta+\frac{c}{2}A)$-MMP over $W$ such that it terminates.
		\end{claim}
		Notice that such an MMP is also a $(K_X+\Delta)$-MMP over $V$ with scaling of $A$.
		\begin{proof}
			If $K_X+\Delta+\frac{c}{2}A$ is nef over $W$, then we already get a log terminal model. Otherwise, first consider the case where $K_X+\Delta+\frac{c}{2}A$ is nef over $W_\eta$.
			Then by the base point free theorem in characteristic 0 and Lemma \ref{lem: semiample is an open condition}, it is semiample over $W_U$ for some open subset $U\subseteq V$.
			By Lemma \ref{lem: contractions and flips exist}, we can run a $(K_X+\Delta+\frac{c}{2}A)$-MMP with scaling with of $\frac{c}{2}A$.
			Note that this MMP is an isomorphism on $X_U$.
			Therefore by Lemma \ref{lem: MMP iso on open subset terminates} this MMP terminates.
			
			Next consider the case where $K_X+\Delta+\frac{c}{2}A$ is not nef over $W_\eta$.
			Then there exist a $(K_{X_\eta}+\Delta_\eta+\frac{c}{2}A_\eta)$-negative extremal curve $\Sigma$ on $X_\eta$, and an effective $\Qq$-divisor $G$ on $X$ such that
			\begin{itemize}
				\item $G$ is ample over $\eta$,
				\item $L=K_X+\Delta+G$ is nef over $\eta$, and
				\item $(L_\eta)^\bot=\Rr[\Sigma]$.
			\end{itemize}
			Possibly replacing $G$ by $(1-\epsilon)cA+\epsilon G$ for $0<\epsilon\ll 1$ and $\epsilon\in\Qq$, we may assume that for any $t\in V$, there exists $G_t\sim_{\Qq,V}G$ such that $(X,\Delta+X_t+G_t)$ is dlt and $\mathbf{B}_+(G/V)$ does not contain any non-klt centers of $(X,\Delta+X_t+G_t)$.
			Just as in the first case, we can run a $(K_X+\Delta+G)$-MMP with scaling of an ample divisor over $W$ and it terminates.
			If the MMP terminates with a Mori fiber space, then we are done.
			Otherwise we obtain a minimal model over $W$.
			So we may assume that $K_X+\Delta+G$ is nef over $W$.
			Then by Proposition \ref{prop: L semiample mixed char} and Lemma \ref{lem: semiample is an open condition} it is semiample over $W$ and defines a contraction $g:X\to Z$. By the proof of Corollary \ref{cor: log terminal model in a neighborhood of s}, there exists an open subset $U$ of $V$, such that $\rho(X_U/Z_U)=1$.
			By Lemma \ref{lem: contractions and flips exist} we can run a $(K_X+\Delta+\frac{c}{2}A)$-MMP over $Z$.
			If each step of the MMP is an isomorphism over $\eta$, then it is an isomorphism on $X_U$, since $\rho(X_U/Z_U)=1$ implies that any contraction contracting a curve in $X_U$ also contracts some curve in $X_\eta$.
			In this situation by Lemma \ref{lem: MMP iso on open subset terminates} the MMP terminates, which is a contradiction since $K_X+\Delta+\frac{c}{2}A$ is not nef over $\eta$.
			Thus we can choose $X_i\dashrightarrow X_{i+1}$ to be the first step in this MMP which is not an isomorphism over $\eta$.
			Replace $X\to W$ by $X_{i+1}\to W$ and continue the discussion as above.
			Then we obtain a sequence of steps of $(K_X+\Delta+\frac{c}{2}A)$-MMP, which must terminate since otherwise restricting to the generic fiber $X_\eta$ we will get an infinite sequence of flips on a dlt threefold in characteristic 0.
		\end{proof}
		
		We run the MMP as in Claim \ref{claim: K_X+Delta+c/2A MMP}.
		If the MMP terminates with a Mori fiber space, then we are done.
		So we may assume that we obtain a model $(X',\Delta')$ such that $K_{X'}+\Delta'$ is nef over $W$. This is also a $(K_X+\Delta)$-MMP with scaling of $A$, and next we prove that the scaling number decreases.
		More explicitly, we claim that
		$$
		c':=\inf\{0\le b\in\Rr\mid K_{X'}+\Delta'+bA'\text{ is nef over $V$}\}<c.
		$$
		Indeed, by the construction above there is no $(K_{X'}+\Delta')$-negative and $(K_{X'}+\Delta'+cA')$-trivial curve, and Claim \ref{claim: nef threshold is rational} implies that $c$ cannot be the nef threshold on $X'$.
		
		
		Let $X=X_0\dashrightarrow X_1\dashrightarrow\cdots\dashrightarrow X_i\dashrightarrow\cdots$ be the sequence of steps of $(K_X+\Delta)$-MMP with scaling of $A$ we constructed above and $\lambda_i$ the scaling numbers. 
		\begin{claim}\label{claim: MMP will terminate}
			The MMP above terminates. 
		\end{claim}
		\begin{proof}
			Assume the opposite. Let $\lim_{i \to \infty}\lambda_i=\lambda$. Then $\lambda_i>\lambda$ since $\lambda_i$ will decrease after finitely many steps. For any $s\in V$, there exists a positive integer $N_s$ such that $X_j\dashrightarrow X_{j+1}$ are isomorphisms on $\Xx_s$ for $j\ge N_s$. When $s$ is a closed point this follows from Theorem \ref{thm: main thm mixed}, and when $s=\eta$ this follows from the MMP for threefolds in characteristic 0. In particular, $X_j\dashrightarrow X_{j+1}$ are isomorphisms over an open neighborhood of $s$.\par
			First let $s=\eta$. Then $K_{X_j}+\Delta_j+\lambda_jA_j$ are nef on $X_\eta$ for $j\ge N_\eta$. Let $a:=\inf\{0\le b\in\Rr\mid K_{X_{N_\eta}}+\Delta_{N_\eta}+bA_{N_\eta}\text{ is nef over } \eta\}$. Then we can see that $a$ is a rational number and $a\le\lambda$.
			By the base point free theorem in characteristic 0 and Lemma \ref{lem: semiample is an open condition}, $K_{X_{N_\eta}}+\Delta_{N_\eta}+aA_{N_\eta}$ is semiample and hence nef over an open subset $U$ of $V$. Note that the MMP $X_j\dashrightarrow X_{j+1}$ only contracts $(K_{X_j}+\Delta_j)$-negative and $(K_{X_j}+\Delta_j+\lambda_jA_j)$-trivial extremal rays. Since $\lambda_j>a$ for any $j>0$, $X_j\dashrightarrow X_{j+1}$ is an isomorphism on $\Xx_s$ for any $s\in U$ and $j\ge N_\eta$. Let $\{s_1,...,s_l\}=V\setminus U$. Then $X_j\dashrightarrow X_{j+1}$ will be isomorphisms when $j>\max\{N_\eta,N_{s_1},...,N_{s_l}\}$, which is a contradiction since we assume the MMP does not terminate.
		\end{proof}
		
		We have constructed a $(K_X+\Delta)$-MMP with scaling of an ample divisor over $V$ and proved that the MMP terminates. Hence the proposition follows.
	\end{proof}

	\begin{proof}[Proof of Theorem \ref{thm: MMP of strictly semi-stable fourfolds}]
		By the definition of strict semi-stability, we can see that $X$ satisfies the assumptions in Proposition \ref{prop: MMP of pair such that (X,X_s) is dlt}. 
		Therefore the statement follows.
	\end{proof}

	Note that the proof of Proposition \ref{prop: MMP of pair such that (X,X_s) is dlt} also applies to the positive characteristic case when $K_X+\Delta$ is big.
	
	\begin{prop}\label{prop: MMP for big pair over C in char>0}
		Let $(X,\Delta)$ be a four-dimensional $\Qq$-factorial klt pair projective and surjective over a curve $C$ which is defined over a perfect field of characteristic $p>5$. Assume that $(X,\Delta+X_t)$ is dlt for any closed point $t\in C$, where $X_t$ is the fiber of the natural morphism $\phi:X\to C$.\par
		If $K_X+\Delta$ is big over $C$, then we can run an $(K_X+\Delta)$-MMP with scaling of an ample divisor over $C$ which terminates with a good minimal model.
	\end{prop}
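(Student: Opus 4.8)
The plan is to follow the proof of Proposition~\ref{prop: MMP of pair such that (X,X_s) is dlt} line by line, substituting for each characteristic-zero input on the generic fibre its counterpart for klt threefolds over imperfect fields of characteristic $p>5$, and exploiting the hypothesis that $K_X+\Delta$ is big over $C$ to keep the whole construction inside the pseudo-effective (indeed big) regime, so that no Mori fibre space ever appears.

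I would begin by recording the behaviour of the generic fibre. The function field $K(C)$ is F-finite of characteristic $p>5$, the pair $(X_\eta,\Delta_\eta)$ is klt over it, and $K_{X_\eta}+\Delta_\eta$ is big since $K_X+\Delta$ is big over $C$. Hence the minimal model program for klt threefolds over imperfect fields of characteristic $p>5$ applies (\cite{DW19}): $(X_\eta,\Delta_\eta)$ has a \emph{good} minimal model, the cone and base point free theorems hold, and any $(K_{X_\eta}+\Delta_\eta)$-MMP with scaling of an ample divisor terminates. These statements replace abundance and termination in characteristic $0$ in the proof of Proposition~\ref{prop: MMP of pair such that (X,X_s) is dlt}, and because $K_{X_\eta}+\Delta_\eta$ stays big along the MMP we never need a Mori fibre space on $X_\eta$.

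Next I would choose the auxiliary $A$ exactly as in Proposition~\ref{prop: MMP of pair such that (X,X_s) is dlt} (a small multiple of a divisor sufficiently ample over $C$, perturbed fibrewise so that $K_X+\Delta+A$ is nef over $C$ and the dlt and augmented-base-locus conditions hold) and reprove the three technical ingredients of that proof. The analogue of Claim~\ref{claim: nef threshold is rational} (rationality of the nef threshold $c$ over $C$, and existence of a $(K_X+\Delta)$-negative, $(K_X+\Delta+cA)$-trivial curve over $C$) follows from the generic-fibre cone and base point free theorems together with Lemma~\ref{lem: semiample is an open condition}. The analogue of Lemma~\ref{lem: contractions and flips exist} goes through verbatim, using Proposition~\ref{prop: cone theorem for semi-stable fourfold over Spec R}, Proposition~\ref{prop: L semiample mixed char}(2), Theorem~\ref{thm: existence of flips for semi-stable fourfold over curve with general coef} and Claim~\ref{claim: flip can extend to an open neighborhood}; the point is that every nef divisor produced there ($K_X+\Delta+cA$, $L'$, $L''$ and their relative variants) is automatically big over $C$, so the positive-characteristic case Proposition~\ref{prop: L semiample mixed char}(2) always applies. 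Finally, the analogue of Lemma~\ref{lem: MMP iso on open subset terminates} follows by restricting the MMP to the finitely many local rings $\cO_{C,t}$ at closed points $t$ outside the open set $U$ on which the MMP is an isomorphism, and invoking Theorem~\ref{thm: main thm char p}, which applies because bigness over $C$, hence pseudo-effectivity, is preserved, so each $K_{\mathcal{X}_t}+\Delta_t$ is pseudo-effective.

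With these in hand I would assemble the argument as in Claims~\ref{claim: K_X+Delta+c/2A MMP} and~\ref{claim: MMP will terminate}: construct a special $(K_X+\Delta)$-MMP with scaling of $A$ over $C$ whose scaling number strictly decreases after finitely many steps, and deduce termination from Theorem~\ref{thm: special termination char p} for the flips together with termination on the generic fibre from \cite{DW19}. The resulting model $(X',\Delta')$ has $K_{X'}+\Delta'$ nef and big over $C$, so Proposition~\ref{prop: Kx semiample mixed char}(2) (through Proposition~\ref{prop: L semiample mixed char}(2)) shows $K_{X'}+\Delta'$ is semiample over $C$; hence $(X',\Delta')$ is a good minimal model. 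I expect the main obstacle to be not any individual step but verifying that the full package of threefold MMP results over imperfect fields of characteristic $p>5$ — existence of good minimal models, the cone and base point free theorems, and termination of the MMP with scaling — is really available in the big case, which is precisely why bigness of $K_X+\Delta$ over $C$ is assumed and is the only essential difference from the mixed-characteristic proof; a secondary check is that one genuinely lands in case~(2) rather than case~(1) of Propositions~\ref{prop: Kx semiample mixed char} and~\ref{prop: L semiample mixed char}, which is what makes the output model \emph{good}.
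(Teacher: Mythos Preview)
Your proposal is essentially the paper's own argument: reproduce the proof of Proposition~\ref{prop: MMP of pair such that (X,X_s) is dlt}, replacing the characteristic-zero inputs on $X_\eta$ by \cite{DW19} and the semiampleness inputs by Proposition~\ref{prop: L semiample mixed char}(2), with bigness of $K_X+\Delta$ guaranteeing that every intermediate divisor stays big so that case~(2) always applies and no Mori fibre space arises. This is exactly how the paper proceeds.

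One small point: for the final step --- showing that the nef and big $K_{X'}+\Delta'$ is semiample over $C$ --- you invoke Proposition~\ref{prop: Kx semiample mixed char}(2), but that result requires $\Delta'$ itself to be big, which is not part of the hypotheses and is not preserved along the MMP. The paper handles this instead by citing \cite[Proposition~5.1]{HW20} (which needs only $K_X+\Delta$ nef and big) together with Lemma~\ref{lem: semiample is an open condition}; you should do the same.
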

	\begin{proof}
		Since $K_X+\Delta$ is big, the semiampleness results needed in the proof of Proposition \ref{prop: MMP of pair such that (X,X_s) is dlt} hold by Proposition \ref{prop: L semiample mixed char}(2) and \cite[Theorem 1.4]{DW19}. Note that the arguments in the proof of Corollary \ref{cor: log terminal model in a neighborhood of s} also hold in positive characteristic under this stronger condition. Thus we can argue as in the proof of Proposition \ref{prop: MMP of pair such that (X,X_s) is dlt} and get a log terminal model. It is actually a good minimal model by \cite[Proposition 5.1]{HW20} and Lemma \ref{lem: semiample is an open condition}.
	\end{proof}
	
	\begin{proof}[Proof of Theorem \ref{thm: MMP for strictly semi-stable fourfolds over C in char>0}]
		By the definition of strict semi-stability, we can see that $X$ satisfies the assumptions in Proposition \ref{prop: MMP for big pair over C in char>0}.
		Therefore the statement follows.
	\end{proof}

\end{document}